\definecolor{mylinkcolor}{rgb}{0.05,0.05,0.4}
\definecolor{mycitecolor}{rgb}{0.1,0.1,0.8}
\definecolor{myurlcolor}{rgb}{0.1,0.1,0.8}
\newtheorem{thm}{Theorem}[section]
\newtheorem{lem}[thm]{Lemma}
\newtheorem{cor}[thm]{Corollary}
\newtheorem{prop}[thm]{Proposition}
\theoremstyle{definition}
\newtheorem{defn}[thm]{Definition}
\newtheorem{rem}[thm]{Remark}
\newtheorem{eg}[thm]{Example}
\title{Probability monads as codensity monads}
\author{Ruben Van Belle\thanks{School of Mathematics, University of Edinburgh; ruben.van.belle@ed.ac.uk}}
\begin{document}
\maketitle
\begin{abstract}
We show from a categorical point of view that probability measures on certain measurable or topological spaces arise canonically as the extension of probability distributions on countable sets. We do this by constructing probability monads as the codensity monads of functors that send a countable set to the space of probability distributions on that set. On (pre)measurable spaces we discuss monads of probability (pre)measures and their finitely additive analogues. We also give codensity constructions for monads of Radon measures on compact Hausdorff spaces and compact metric spaces and for the monad of Baire measures on Hausdorff spaces.

A crucial role in these constructions is given by integral representation theorems, which we derive from a generalized Daniell-Stone theorem.
\end{abstract}
\tableofcontents
\section{Introduction}
A probability distribution on a countable set $A$ is defined as a function $p:A\to [0,1]$ such that $$\sum_{a\in A}p(a)=1.$$ For uncountable sets however, this definition is not good any more. Indeed, let $p:A\to [0,1]$ be a function on an uncountable set $A$ such that $\sum_{a\in A}p(a)=1$; then the support of $p$ is countable. But this means that $p$ is essentially a probability distribution on a countable set. Therefore, to express  probability distribution on sets such as $\mathbb{R}$ or $\mathcal{C}([0,\infty),\mathbb{R})$ we need a different definition. Using measure theory a definition of a \emph{probability measure} can be given by Kolmogorov's axioms. 

An endofunctor $\mathcal{G}$ on the category of measurable spaces $\textbf{Mble}$ can be defined by sending a measurable space $X$ to the measurable space of all probability measures on $X$. Using Dirac delta probability measures and integration, this endofunctor can be given a monad structure. This monad is called the \emph{Giry monad} \cite{giry}. Monads on categories of measurable and topological spaces that are similar to the Giry monad are referred to as \emph{probability monads} \cite{jacobs}.

The right Kan extension of a functor along itself, assuming that it exists, can be given a natural monad structure. The obtained monad is called the \emph{codensity monad} of that functor \cite{leinster}. We will show that probability monads can be constructed as the codensity monads of functors that send a countable set $A$ to the the space of all probability measures on $A$. This shows that probability measures arise naturally as the categorical extension of the more intuitive probability measures on countable sets. We will discuss this construction for several monads of different kinds of probability measures on different kinds of measurable and topological spaces.

We begin the paper with the definition of codensity monad (section \ref{codensitymonads}). The rest of the paper can be divided in two parts. In the first part we will discuss probability monads on categories of measurable spaces. For completeness we start in section \ref{premeasurablespaces} with a detailed overview of probability measures and integration, which will lead to an integral representation theorem for probability measures. After this we will construct the Giry monad and variations of this monad as codensity monads in section \ref{probabilitymonadsoncategoriesofpremeasurablespaces}. Here the integral representation theorem from before will play a key role.
The second part is similar to the first part, only here we will talk about probability monads on categories of topological spaces. Again we start with an overview of probability measures and integration on topological spaces in section \ref{Hausdorffspaces} and give a slightly more general version of the Daniell-Stone representation theorem in section \ref{5}. This result will allow us to construct the Radon monad, bounded Lipschitz monad and Baire monad as codensity monads in section \ref{probabilitymonadsoncategoriesofHausdorffspaces}. In this section we will also briefly discuss another probability monad and the problems that arose in the attempt to construct this monad as a codensity monad.

An overview of the results in this paper is given in the following table.
\begin{center}
\begin{tabular}{ |c|c|c|c|c| } 
 \hline
Category & Probability monad & Theorem\\ 
 \hline
 measurable spaces & Giry monad & \ref{Girymonad}\\ 
measurable spaces & Giry monad of fin. add. probability measures & \ref{finadd}\\ 
premeasurable spaces & Giry monad of probability premeasures &\ref{premeasmonad}\\
 compact Hausdorff spaces & Radon monad &\ref{radonmonad}\\
 compact metric spaces & Bounded Lipschitz monad & \ref{blip}\\
 Hausdorff spaces & Baire monad & \ref{baire}\\
 \hline
\end{tabular}
\end{center}
\textbf{Related work}: The Giry monad was introduced by Giry in \cite{giry}. This monad and the finitely additive variation of it already have been constructed as codensity monads of inclusion functors of certain convex spaces in the category of measurable spaces by Avery in \cite{Avery}. The constructions we will present are different but related to Avery's constructions. Avery uses powers of the unit interval and affine maps, while we will use finite and countable simplices. The Radon monad originates from work by Semadeni \cite{semadeni} and Swirszcz \cite{swirszcz} and is further discussed in \cite{keimel} and \cite{jacobs}. The bounded Lipschitz monad is similar to the Kantorovich monad on the category of compact metric spaces, which was introduced by van Breugel in \cite{breugel}. This monad was extended to a monad on complete metric spaces by Perrone and Fritz \cite{Fritz1, Fritz2}.

\textbf{Acknowledgements}: I would like to thank Tom Leinster for helpful discussions. I would also like to thank the anonymous referee for suggesting to decompose the functor $G:\textbf{Set}_c\to \textbf{Mble}$ as $\textbf{Set}_c\xrightarrow{j}\textbf{Mble}\xrightarrow{\mathcal{G}}\textbf{Mble}$ and similar decompositions in the other sections. This led to several simplifications in the proofs and constructions.
\section{Codensity monads}\label{codensitymonads}
Every functor $G:\mathcal{D}\to \mathcal{C}$ that has a left adjoint $F:\mathcal{C}\to \mathcal{D}$ induces a monad on $\mathcal{C}$, namely the endofunctor $GF:\mathcal{C}\to \mathcal{C}$ together with the unit of the adjunction as unit of the monad and $G\epsilon_F$ as the multiplication of the monad. Suppose now that we have a functor $G:\mathcal{D}\to \mathcal{C}$ such that the right Kan extension of $G$ along $G$ exists; then $G$ still induces a monad on $\mathcal{C}$. Let $(T^G:\mathcal{C}\to \mathcal{C},\gamma:T^G\circ G\to G)$ be the right Kan extension of $G$ along $G$. 

There is a natural transformation $1_G:1_{\mathcal{C}}\circ G\to G$ and therefore by the universal property of right Kan extensions, there exists a unique $\eta:1_\mathcal{C}\to G$ such that the following diagrams are equal:

\[\begin{tikzcd}
	\mathcal{D} & {} & \mathcal{C} \\
	{} & \mathcal{C} & {}
	\arrow["G"', from=1-1, to=2-2]
	\arrow["{1_{\mathcal{C}}}"', from=2-2, to=1-3]
	\arrow[""{name=0, anchor=center, inner sep=0}, "G", from=1-1, to=1-3]
	\arrow["{1_G}", shorten <=5pt, shorten >=6pt, Rightarrow, from=2-2, to=0]
\end{tikzcd}
=
\begin{tikzcd}
	\mathcal{D} & {} & \mathcal{C} \\
	{} & \mathcal{C} & {}
	\arrow["G"', from=1-1, to=2-2]
	\arrow[""{name=0, anchor=center, inner sep=0}, "G", from=1-1, to=1-3]
	\arrow[""{name=1, anchor=center, inner sep=0}, "{T^G}"{description}, dashed, from=2-2, to=1-3]
	\arrow[""{name=2, anchor=center, inner sep=0}, "{1_{\mathcal{C}}}"', curve={height=18pt}, from=2-2, to=1-3]
	\arrow["\gamma"', shorten <=5pt, shorten >=6pt, Rightarrow, from=2-2, to=0]
	\arrow["\eta"', shorten <=4pt, shorten >=4pt, Rightarrow, from=2, to=1]
\end{tikzcd}\]

There is a natural transformation $ T^GT^G\circ G\xrightarrow{T^G\gamma} T^G\circ G\xrightarrow{\gamma}G$ and therefore by the universal property of right Kan extensions there exists a unique natural transformation $\mu:T^GT^G\to T^G$ such that the following diagrams are equal:

\[\begin{tikzcd}
	\mathcal{D} & {} & \mathcal{C} \\
	{} & \mathcal{C} & {}
	\arrow["G"', from=1-1, to=2-2]
	\arrow["{T^GT^G}"', from=2-2, to=1-3]
	\arrow[""{name=0, anchor=center, inner sep=0}, "G", from=1-1, to=1-3]
	\arrow[shorten <=5pt, shorten >=6pt, Rightarrow, from=2-2, to=0]
\end{tikzcd}
=
\begin{tikzcd}
	\mathcal{D} & {} &  \mathcal{C} \\
	{} & \mathcal{C} & {}
	\arrow["G"', from=1-1, to=2-2]
	\arrow[""{name=0, anchor=center, inner sep=0}, "G", from=1-1, to=1-3]
	\arrow[""{name=1, anchor=center, inner sep=0}, "{T^G}"{description}, dashed, from=2-2, to=1-3]
	\arrow[""{name=2, anchor=center, inner sep=0}, "{T^GT^G}"', curve={height=18pt}, from=2-2, to=1-3]
	\arrow["\mu"', shorten <=4pt, shorten >=4pt, Rightarrow, from=2, to=1]
	\arrow["\gamma"', shorten <=5pt, shorten >=6pt, Rightarrow, from=2-2, to=0]
\end{tikzcd}\]

\begin{prop}
The triple $(T^G,\eta,\mu)$ is a monad.\label{codensity}
\end{prop}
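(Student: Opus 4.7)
The plan is to verify the three monad axioms — associativity $\mu\circ \mu T^G=\mu \circ T^G\mu$, left unit $\mu\circ \eta T^G=1_{T^G}$, and right unit $\mu\circ T^G\eta=1_{T^G}$ — by exploiting the universal property of the right Kan extension $(T^G,\gamma)$. That universal property says: any natural transformation $\alpha\colon S\to T^G$, where $S\colon \mathcal{C}\to\mathcal{C}$ is arbitrary, is uniquely determined by the composite $\gamma\circ \alpha G\colon SG\to G$. So to show two parallel natural transformations $\alpha,\beta\colon S\rightrightarrows T^G$ agree, it is enough to show $\gamma\circ \alpha G=\gamma\circ \beta G$.

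First I would unpack the defining equations of $\eta$ and $\mu$ as identities of natural transformations of the form ``(something) into $G$'': by construction,
\[
\gamma\circ \eta G = 1_G,\qquad \gamma\circ \mu G = \gamma\circ T^G\gamma.
\]
These two identities are the only facts about $\eta,\mu,\gamma$ that enter the proof.

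Next I would check the three axioms one at a time by whiskering with $G$ and composing with $\gamma$. For the \emph{right unit law} $\mu\circ T^G\eta=1_{T^G}$, whiskering with $G$ gives $\gamma\circ \mu G\circ T^G\eta G=\gamma\circ T^G\gamma\circ T^G\eta G=T^G(\gamma\circ \eta G)\circ\gamma=T^G(1_G)\circ \gamma=\gamma$, matching $\gamma\circ 1_{T^G}G=\gamma$. For the \emph{left unit law} $\mu\circ \eta T^G=1_{T^G}$, whiskering with $G$ yields $\gamma\circ\mu G\circ \eta T^G G=\gamma\circ T^G\gamma\circ \eta T^G G$; using naturality of $\eta$ at the component $T^G G\to G$ provided by $\gamma$, this becomes $\gamma\circ \eta G\circ \gamma=1_G\circ\gamma=\gamma$. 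For \emph{associativity}, whiskering both $\mu\circ\mu T^G$ and $\mu\circ T^G\mu$ with $G$ and composing with $\gamma$ reduces, using the defining identity $\gamma\circ\mu G=\gamma\circ T^G\gamma$ twice (and naturality of $\gamma$ or $\mu$ once to commute a whiskering past another), to the common expression $\gamma\circ T^G\gamma\circ T^GT^G\gamma$; so both composites agree and uniqueness forces $\mu\circ\mu T^G=\mu\circ T^G\mu$.

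The only genuine bookkeeping issue, and the place where I would slow down, is the associativity check: one has to rewrite $\mu T^G G$ and $T^G\mu G$ in terms of $\gamma$'s by applying the defining property of $\mu$ in the correct ``slot'', which in practice means invoking naturality of $\gamma$ (or equivalently of $T^G$ on the whiskered $2$-cell) to slide one copy of $\gamma$ past the other. Once this is done cleanly, all three axioms follow uniformly from the universal property, and the triple $(T^G,\eta,\mu)$ is a monad.
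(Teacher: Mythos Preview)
Your argument is correct and is the standard verification of the monad axioms for a codensity monad via the universal property of $\mathrm{Ran}_G G$; the paper itself does not prove this proposition but simply refers to section~2 of \cite{leinster}, where essentially this argument appears. One small slip to tidy: in the right-unit computation you wrote $\gamma\circ T^G\gamma\circ T^G\eta G = T^G(\gamma\circ\eta G)\circ\gamma$, but the outer $\gamma$ should stay on the left, i.e.\ $\gamma\circ T^G(\gamma\circ\eta G)$; since $T^G(1_G)=1_{T^G G}$ the conclusion is unaffected.
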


A proof for this result can be found in section 2 of \cite{leinster}. The monad in Proposition \ref{codensity} is called the \textbf{codensity monad of $G$}.

\begin{eg}
The codensity monad of a right adjoint functor $G:\mathcal{D}\to \mathcal{C}$ is the monad induced by the adjunction. This follows from the fact that $\text{Ran}_GG=GF$, where $F$ is the left adjoint of $G$.
\end{eg}

\begin{eg}[Kennison and Gildenhuys]
Let $\textbf{Set}_f$ be the category of finite sets and maps. The codensity monad of the inclusion functor $\textbf{Set}_f\to \textbf{Set}$ is the ultrafilter monad. This is a result from \cite{Kennison}. 
\end{eg}
The following result, which follows from Theorem 3.7.2 in \cite{borceuxI},  will be useful to find codensity monads of certain functors.
\begin{prop}\label{Kan}
Let $G:\mathcal{D}\to \mathcal{C}$ be a functor, where $\mathcal{D}$ is an essentially small category and $\mathcal{C}$ is a complete category. Then the codensity monad of $G$ exists and \[T^G(X)=\lim(X\downarrow G\xrightarrow{U}\mathcal{D}\xrightarrow{G}\mathcal{C})\]
for every object $X$ in $\mathcal{C}$. Here $U:X\downarrow G\to \mathcal{D}$ is the forgetful functor.
\end{prop}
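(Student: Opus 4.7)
The plan is to invoke the pointwise formula for right Kan extensions. First, since $\mathcal{D}$ is essentially small, for each $X\in\mathcal{C}$ the comma category $X\downarrow G$ is essentially small: its objects are pairs $(D,f\colon X\to GD)$ with $D\in\mathcal{D}$, and a small skeleton of $\mathcal{D}$ together with the fact that each hom-set $\mathcal{C}(X,GD)$ is a set yields a small skeleton of $X\downarrow G$. Hence the diagram $G\circ U\colon X\downarrow G\to \mathcal{C}$ has a limit in $\mathcal{C}$ by completeness, and I take this limit as the definition of $T^G(X)$.

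Next, I would promote $X\mapsto T^G(X)$ to a functor. A morphism $h\colon X\to X'$ in $\mathcal{C}$ induces, by precomposition, a functor $h^{*}\colon X'\downarrow G\to X\downarrow G$; restricting the limiting cone on $T^G(X)$ along $h^{*}$ gives a cone over $G\circ U\colon X'\downarrow G\to \mathcal{C}$, which factors uniquely through $T^G(X')$ to yield $T^G(h)\colon T^G(X)\to T^G(X')$. Functoriality is routine. I would then define the candidate universal transformation $\gamma\colon T^G\circ G\to G$ at each $D\in \mathcal{D}$ to be the limit projection associated to the object $(D,1_{GD})\in GD\downarrow G$; naturality in $D$ follows because any $g\colon D\to D'$ gives a morphism $(D,1_{GD})\to (D',Gg)$ in $GD\downarrow G$, along which the corresponding limit projections are compatible.

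Finally, I would verify the universal property of right Kan extensions. Given any functor $S\colon \mathcal{C}\to \mathcal{C}$ and natural transformation $\alpha\colon S\circ G\to G$, at each $X$ the family $\{\alpha_D\circ S(f)\colon S(X)\to GD\}_{(D,f)\in X\downarrow G}$ is a cone over $G\circ U$: for any morphism $g\colon(D,f)\to (D',f')$ in $X\downarrow G$, naturality of $\alpha$ gives $Gg\circ\alpha_D=\alpha_{D'}\circ SGg$, and composing with $S(f)$ together with the relation $Gg\circ f=f'$ yields the required compatibility. This cone factors uniquely through $T^G(X)$, producing $\tilde\alpha_X\colon S(X)\to T^G(X)$; standard diagram chasing then shows that $\tilde\alpha$ is natural in $X$ and satisfies $\gamma\circ \tilde\alpha G=\alpha$, with uniqueness forced by the uniqueness clauses in the limit universal properties.

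The main obstacle is bookkeeping rather than a conceptual difficulty: one must carefully track how naturality of $\alpha$ upgrades the $\alpha_D\circ S(f)$ into a cone, and how the construction of $T^G$ on morphisms interacts with the induced factorisation $\tilde\alpha$. These are precisely the verifications performed in Borceux's Theorem 3.7.2, to which the paper defers.
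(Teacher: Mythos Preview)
Your proposal is correct and follows the standard pointwise right Kan extension argument; the paper itself does not give a proof but simply cites Borceux's Theorem 3.7.2, which is exactly the verification you sketch and explicitly acknowledge in your final paragraph. The only minor addition worth making is that the proposition claims the codensity \emph{monad} exists, not merely $\operatorname{Ran}_G G$; once you have established the right Kan extension, the monad structure follows from the paper's Proposition~\ref{codensity}, so you might mention this in one line.
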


\section{Premeasurable spaces}\label{premeasurablespaces}
In this section we will discuss probability premeasures and their finitely additive analogues on premeasurable spaces. We will give an overview of results on integration with respect to a probability premeasure and end the section with an integral representation theorem. In the case of probability measures, these correspond to standard results in measure theory. Premeasures play an important role in extension theorems such as the Carath\'eodory extension theorem. Because integration with respect to a premeasure is far less common than the usual Lebesgue integral, we will give detailed proofs for all results.

We will call a set $X$ together with an algebra of subsets $\mathcal{B}_X$ (i.e. a family of subsets of $X$ that is closed under complements and finite intersections) a \textbf{premeasurable space}. We say that a map $f:X\to Y$ between premeasurable spaces is \textbf{premeasurable} if $f^{-1}(\mathcal{B}_Y)\subseteq \mathcal{B}_X$.

Let $\mathbb{P}:\mathcal{B}_X\to [0,1]$ be a function such that $\mathbb{P}(X)=1$. The function $\mathbb{P}$ is called a \textbf{probability premeasure} if $\mathbb{P}\left(\bigcup_{n=1}^\infty A_n\right)=\sum_{n=1}^\infty \mathbb{P}(A_n)$ for every pairwise disjoint collection $(A_n)_{n=1}^\infty$ of subsets in $\mathcal{B}_X$ such that $\bigcup_{n=1}^\infty A_n$ is also in $\mathcal{B}_X$. We say that $\mathbb{P}$ is a \textbf{probability measure} if $\mathcal{B}_X$ is a $\sigma$-algebra.
If we only have that $\mathcal{P}(A\cup B) = \mathbb{P}(A)+\mathbb{P}(B)$ for disjoint subsets $A$ and $B$ in $\mathcal{B}_X$, then we call $\mathbb{P}$ a \textbf{finitely additive probability premeasure}. This is also known as a \textbf{charge}. If $\mathcal{B}_X$ is a $\sigma$-algebra we say that $\mathbb{P}$ is a \textbf{finitely additive probability measure}. Note that every probability (pre)measure is a finitely additive probability (pre)measure.
\begin{eg}
The set $[0,1]$ together with the smallest algebra that contains all the open intervals of $[0,1]$ is a premeasurable space.
\end{eg}
Let $(X,\mathcal{B}_X)$ be a premeasurable space. We will call a function $s:X\to [0,1]$ a \textbf{simple function on $X$} if there exists a natural number $n\geq 1$ and $a_1, a_2,\ldots, a_n \in [0,1]$ and $A_1,A_2,\ldots,A_n\in \mathcal{B}_X$ such that $$s=\sum_{k=1}^na_k1_{A_k}.$$ Note that a simple function is always premeasurable. The collection of simple functions on $X$ is denoted by $\text{Simp}(X,[0,1])$. Given a finitely additive probability premeasure $\mathbb{P}$, we can define a map $J_{\mathbb{P}}:\text{Simp}(X,[0,1])\to [0,1]$ by the assignment $$\sum_{k=1}^na_k1_{A_k} \mapsto \sum_{k=1}^n a_k \mathbb{P}(A_k).$$
Because $\mathcal{B}_X$ is closed under complements and finite unions, we can use a common refinement argument to show that the assignment is independent of the representation of the simple function. 

Again using a common refinement argument, we obtain the following proposition.
\begin{prop} \label{simple}
For simple functions $s,t:X\to [0,1]$ such that $s+t\leq 1$ and $r\in [0,1]$, we have that also $s+t$ and $rs$ are simple functions. Furthermore, $J_{\mathbb{P}}(s+t)=J_{\mathbb{P}}(s)+J_{\mathbb{P}}(t)$ and $J_{\mathbb{P}}(rs)=rJ_{\mathbb{P}}(s)$. If $s\leq t$, then also $t-s$ is a simple function and $J_{\mathbb{P}}(s)\leq J_{\mathbb{P}}(t)$.
\end{prop}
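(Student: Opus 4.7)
The plan is to lean on the same common-refinement trick that the authors used to show $J_{\mathbb{P}}$ is well-defined on representations. Given simple functions $s = \sum_{k=1}^n a_k 1_{A_k}$ and $t = \sum_{l=1}^m b_l 1_{B_l}$, I would first produce a joint refinement: for each pair $(S,T)$ with $S \subseteq \{1,\ldots,n\}$ and $T \subseteq \{1,\ldots,m\}$ define
\[
E_{S,T} \;=\; \bigcap_{k\in S} A_k \,\cap\, \bigcap_{k\notin S} A_k^c \,\cap\, \bigcap_{l\in T} B_l \,\cap\, \bigcap_{l\notin T} B_l^c,
\]
which lies in $\mathcal{B}_X$ because the algebra is closed under complements and finite intersections. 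These cells are pairwise disjoint with union $X$, and on $E_{S,T}$ both $s$ and $t$ are constant with values $\alpha_{S,T} := \sum_{k\in S} a_k$ and $\beta_{S,T} := \sum_{l\in T} b_l$. Hence $s = \sum_{S,T}\alpha_{S,T}1_{E_{S,T}}$ and $t = \sum_{S,T}\beta_{S,T}1_{E_{S,T}}$.

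From this common representation, the algebraic claims are routine. First, $s+t = \sum_{S,T}(\alpha_{S,T}+\beta_{S,T})1_{E_{S,T}}$ and $rs = \sum_{S,T}(r\alpha_{S,T})1_{E_{S,T}}$ are simple functions whose coefficients lie in $[0,1]$ by the hypotheses $s+t\leq 1$ and $r\in[0,1]$. Using finite additivity of $\mathbb{P}$ (and well-definedness of $J_{\mathbb{P}}$, which was already established from the same refinement argument and which I would invoke to pass between the original and refined representations), one computes
\[
J_{\mathbb{P}}(s+t) = \sum_{S,T}(\alpha_{S,T}+\beta_{S,T})\mathbb{P}(E_{S,T}) = J_{\mathbb{P}}(s)+J_{\mathbb{P}}(t),
\]
and identically $J_{\mathbb{P}}(rs) = rJ_{\mathbb{P}}(s)$.

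For the final assertion, assume $s\leq t$. Evaluating at a point of $E_{S,T}$ gives $\alpha_{S,T}\leq \beta_{S,T}$, so $t-s = \sum_{S,T}(\beta_{S,T}-\alpha_{S,T})1_{E_{S,T}}$ is a simple function with coefficients in $[0,1]$. Since $s+(t-s)=t\leq 1$, the additivity result just proved applies and yields $J_{\mathbb{P}}(t)=J_{\mathbb{P}}(s)+J_{\mathbb{P}}(t-s)\geq J_{\mathbb{P}}(s)$, because $J_{\mathbb{P}}(t-s)\geq 0$ directly from its refined expression.

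There is no serious obstacle; the only mildly delicate point is making sure one does not implicitly use countable additivity or $\sigma$-algebra structure, since $\mathcal{B}_X$ is only an algebra and $\mathbb{P}$ is only assumed finitely additive. Everything above is finite-combinatorial, so this is automatic. The heaviest bookkeeping is simply the double indexing by $(S,T)$, which could be streamlined by working directly with the atoms of the finite subalgebra generated by $A_1,\ldots,A_n,B_1,\ldots,B_m$.
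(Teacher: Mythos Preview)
Your proposal is correct and is precisely the common refinement argument the paper alludes to; the paper does not spell out the details but simply remarks that the proposition follows from this argument, and your execution (partitioning $X$ by the atoms $E_{S,T}$ of the finite subalgebra generated by the $A_k$ and $B_l$, then reading off additivity, homogeneity, and monotonicity) is the standard way to carry it out.
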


The following useful lemma follows from the construction described in the proof of Corollary 4.5.9 in \cite{Rao}.
\begin{lem}
Let $f:X\to [0,1]$ be a premeasurable map. There exists an increasing sequence $(s_n)_n$ of simple functions that converges uniformly to $f$ and there exists a decreasing sequence $(t_n)_n$ of simple functions that converges uniformly to $f$. \label{lemma}
\end{lem}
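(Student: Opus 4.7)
The plan is to use a standard dyadic approximation, which works here because each approximant only involves \emph{finitely} many preimages, so we never leave the algebra $\mathcal{B}_X$. For each $n\geq 1$, partition $[0,1]$ into the dyadic pieces
\[I_{n,k}=\left[\tfrac{k}{2^n},\tfrac{k+1}{2^n}\right)\quad (0\le k\le 2^n-2),\qquad I_{n,2^n-1}=\left[\tfrac{2^n-1}{2^n},1\right].\]
Each $I_{n,k}$ is a finite boolean combination of open intervals of $[0,1]$ (for instance $[a,b)=(-1,b)\cap ((-1,a))^c$ in $[0,1]$), hence lies in the chosen algebra on $[0,1]$. By premeasurability of $f$, the sets $A_{n,k}:=f^{-1}(I_{n,k})$ therefore lie in $\mathcal{B}_X$, and for fixed $n$ they form a finite partition of $X$.

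I would then define the two approximations
\[s_n:=\sum_{k=0}^{2^n-1}\tfrac{k}{2^n}\,1_{A_{n,k}},\qquad t_n:=\sum_{k=0}^{2^n-1}\tfrac{k+1}{2^n}\,1_{A_{n,k}}.\]
Both are simple functions with values in $[0,1]$, and by construction
\[s_n(x)\le f(x)\le t_n(x)\quad\text{and}\quad t_n(x)-s_n(x)=\tfrac{1}{2^n}\]
for every $x\in X$. This gives $\|f-s_n\|_\infty\le 2^{-n}$ and $\|t_n-f\|_\infty\le 2^{-n}$, so both sequences converge uniformly to $f$.

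Monotonicity I would check by observing that the partition at level $n+1$ refines the one at level $n$: each $I_{n,k}$ decomposes as $I_{n+1,2k}\sqcup I_{n+1,2k+1}$, so $A_{n,k}=A_{n+1,2k}\sqcup A_{n+1,2k+1}$. On $A_{n+1,2k}$ we have $s_{n+1}=\tfrac{2k}{2^{n+1}}=\tfrac{k}{2^n}=s_n$, and on $A_{n+1,2k+1}$ we have $s_{n+1}=\tfrac{2k+1}{2^{n+1}}>\tfrac{k}{2^n}=s_n$; hence $s_{n+1}\ge s_n$ everywhere. Similarly $t_{n+1}\le t_n$: on $A_{n+1,2k}$, $t_{n+1}=\tfrac{2k+1}{2^{n+1}}<\tfrac{2k+2}{2^{n+1}}=t_n$, and on $A_{n+1,2k+1}$, $t_{n+1}=t_n$.

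The only delicate point, and the one I would want to be careful about, is the very first observation: that the dyadic half-open intervals $I_{n,k}$ actually lie in whatever algebra is fixed on $[0,1]$. Once that is secured, premeasurability of $f$ handles the rest, and the whole argument uses only finite set-theoretic operations, so at no point do we need $\mathcal{B}_X$ to be closed under countable unions. This is essential, because the lemma is being used in the premeasurable (algebra, not $\sigma$-algebra) setting where countable approximation arguments are otherwise unavailable.
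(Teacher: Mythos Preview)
Your argument is correct and is precisely the standard dyadic construction the paper is pointing to when it says the lemma ``follows from the construction described in the proof of Corollary 4.5.9 in \cite{Rao}''; the paper gives no independent proof beyond that citation. Your care in checking that the dyadic pieces $I_{n,k}$ lie in the algebra generated by open intervals of $[0,1]$, and that only finite boolean operations are used, is exactly the point that distinguishes the premeasurable setting from the usual measurable one.
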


Let $\textbf{PreMble}(X,[0,1])$ be the set of premeasurable maps from $X$ to $[0,1]$. We define a map $I_{\mathbb{P}}:\textbf{PreMble}(X,[0,1])\to [0,1]$ by the assignment $$ f\mapsto \text{sup}\left\{J_{\mathbb{P}}(s)\mid s\leq f \text{ and }s\in \text{Simp}(X,[0,1])\right\}.$$
This map is well-defined because $J_{\mathbb{P}}$ is order preserving by Proposition \ref{simple}.

The following proposition summarizes results about the additivity and continuity of $I_{\mathbb{P}}$. In the case of probability measures, these are classical results in measure theory. For finitely additive probability premeausres these are lesser-known. Similar results have been discussed in \cite{sipos}.

\begin{prop}\label{integral}
Let $X$ be a premeasurable space and let $\mathbb{P}$ be a finitely additive probability premeasure on $X$. We have the following properties: 
\begin{enumerate}[label=(\roman*)]
    \item For a simple function $s:X\to [0,1]$, we have  $I_{\mathbb{P}}(s)=J_{\mathbb{P}}(s)$. In particular $I_{\mathbb{P}}(1)=1$. 
    \item \label{order} For premeasurable functions $f,g:X\to [0,1]$ such that $f\leq g$, we have $I_{\mathbb{P}}(f)\leq I_{\mathbb{P}}(g)$. 
    \item \label{inf} For a premeasurable map $f:X\to [0,1] $, \[I_{\mathbb{P}}(f)=\inf\left\{J_{\mathbb{P}}(s)\mid s\geq f \text{ and }s\in \textup{Simp}(X,[0,1])\right\}.\]
    \item For premeasurable maps $f,g:X\to [0,1]$ such that also $f+g\in\textbf{\textup{PreMble}}(X,[0,1])$, we have $I_{\mathbb{P}}(f+g)=I_{\mathbb{P}}(f)+I_{\mathbb{P}}(g)$.\label{fsum}
    \item Suppose that $\mathbb{P}$ is a probability premeasure. For an increasing sequence of premeasurable maps $(f_n:X\to [0,1])_{n=1}^\infty$ such that $f:=\lim_{n\to\infty}f_n$ is also premeasurable, $\lim_{n\to \infty}I_{\mathbb{P}}(f_n)=I_{\mathbb{P}}(f)$.\label{monotone}
    \item \label{series} Let $(f_n:X\to [0,1])_{n=1}^\infty$ be a collection of premeasurable maps such that $f:=\sum_{n=1}^\infty f_n$ is also an element of $\textbf{\textup{PreMble}}(X,[0,1])$. If $\mathbb{P}$ is a probability premeasure, then $$I_{\mathbb{P}}(f)=\sum_{n=1}^\infty I_{\mathbb{P}}(f_n).$$ 
\end{enumerate} 
\end{prop}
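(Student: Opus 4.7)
The plan is to prove the six items in order, with Lemma \ref{lemma} and Proposition \ref{simple} as the main tools, and with countable additivity entering only in (v). Items (i) and (ii) are nearly immediate: $J_\mathbb{P}(s)\le I_\mathbb{P}(s)$ because $s$ itself is admissible in the defining supremum, while the reverse inequality follows from the monotonicity statement in Proposition \ref{simple}; item (ii) is then just monotonicity of $\sup$. For (iii) I would use Lemma \ref{lemma} to pick simple sandwiching sequences $s_n\nearrow f$ and $t_n\searrow f$ uniformly, so that $0\le t_n-s_n\le\epsilon_n$ with $\epsilon_n\to 0$. Proposition \ref{simple} gives $J_\mathbb{P}(t_n)-J_\mathbb{P}(s_n)=J_\mathbb{P}(t_n-s_n)\le\epsilon_n$, and since $J_\mathbb{P}(s_n)\le I_\mathbb{P}(f)\le\inf\{J_\mathbb{P}(t):t\ge f,\,t\text{ simple}\}\le J_\mathbb{P}(t_n)$, sending $n\to\infty$ squeezes all three quantities together.

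For (iv), the inequality $I_\mathbb{P}(f)+I_\mathbb{P}(g)\le I_\mathbb{P}(f+g)$ is easy: $s_n+s_n'\le f+g\le 1$ is a simple lower bound for $f+g$, so $J_\mathbb{P}(s_n)+J_\mathbb{P}(s_n')=J_\mathbb{P}(s_n+s_n')\le I_\mathbb{P}(f+g)$. The reverse requires care because the naive upper envelope $t_n+t_n'$ can exceed $1$; I would truncate to $u_n:=(t_n+t_n')\wedge 1$, which is simple, lies in $[0,1]$, and dominates $f+g$. Setting $v_n:=u_n-(s_n+s_n')$, one has $0\le v_n\le(t_n-s_n)+(t_n'-s_n')\le 2\epsilon_n$, and two applications of Proposition \ref{simple} give $J_\mathbb{P}(u_n)=J_\mathbb{P}(s_n)+J_\mathbb{P}(s_n')+J_\mathbb{P}(v_n)\le J_\mathbb{P}(s_n)+J_\mathbb{P}(s_n')+2\epsilon_n$. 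Combining with $I_\mathbb{P}(f+g)\le J_\mathbb{P}(u_n)$ from (iii) and letting $n\to\infty$ closes the inequality.

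The main obstacle is (v), which is the first place countable additivity is required. I would imitate the classical Lebesgue-style argument. Fix $\alpha\in(0,1)$ and a simple $s=\sum_{k=1}^K a_k\mathbf{1}_{A_k}\le f$ with disjoint $A_k\in\mathcal{B}_X$, and set $E_n:=\{f_n\ge\alpha s\}$. Since $s$ is constant $a_k$ on $A_k$ and $f_n$ is premeasurable, $E_n$ decomposes as $\bigcup_k\bigl(A_k\cap f_n^{-1}([\alpha a_k,1])\bigr)\cup\bigl(X\setminus\bigcup_k A_k\bigr)$ and hence lies in $\mathcal{B}_X$. For each $k$ with $a_k>0$, $(A_k\cap E_n)_n$ is an increasing sequence in $\mathcal{B}_X$ whose union is $A_k$ (because $\alpha<1$ and $f_n\nearrow f\ge s$ pointwise), and countable additivity of $\mathbb{P}$ yields $\mathbb{P}(A_k\cap E_n)\to\mathbb{P}(A_k)$. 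Consequently $I_\mathbb{P}(f_n)\ge J_\mathbb{P}(\alpha s\cdot\mathbf{1}_{E_n})=\alpha\sum_k a_k\mathbb{P}(A_k\cap E_n)\to\alpha J_\mathbb{P}(s)$. Letting $\alpha\to 1$ and then taking the sup over $s\le f$ gives $\lim I_\mathbb{P}(f_n)\ge I_\mathbb{P}(f)$, while the reverse inequality is just (ii).

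Finally, (vi) follows by applying (iv) inductively to the partial sums $F_N:=\sum_{n=1}^N f_n\le f\le 1$ to obtain $I_\mathbb{P}(F_N)=\sum_{n=1}^N I_\mathbb{P}(f_n)$, and then passing to the limit via (v), since $F_N\nearrow f$ is an increasing sequence of premeasurable $[0,1]$-valued functions with premeasurable limit $f$.
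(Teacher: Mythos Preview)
Your arguments for (i)--(v) are essentially the paper's, with only cosmetic repackaging (you run sequences where the paper fixes one $\epsilon$, and you include the complement $X\setminus\bigcup_k A_k$ in the decomposition of $E_n$ rather than assuming the $A_k$ partition $X$). These parts are fine.

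The gap is in (vi). You apply (iv) inductively to the partial sums $F_N=\sum_{n\le N}f_n$ and then invoke (v), but both (iv) and (v) require their inputs to be premeasurable, and the hypothesis of (vi) does \emph{not} guarantee that the $F_N$ are premeasurable---only that each $f_n$ and the full sum $f$ are. In a mere algebra (as opposed to a $\sigma$-algebra) the sum of two premeasurable $[0,1]$-valued functions need not be premeasurable; the paper's own footnote gives $\pi_1,\pi_2$ on $[0,1]^2$ with the algebra generated by open rectangles. Concretely, with $f_1=\pi_1/4$, $f_2=\pi_2/4$, $f_3=1/2-\pi_1/4$, and $f_n=0$ for $n\ge 4$, each $f_n$ is premeasurable and the total $f=1/2+\pi_2/4$ is premeasurable, yet $F_2=(\pi_1+\pi_2)/4$ is not, so $I_{\mathbb P}(F_2)$ is undefined and your induction cannot start.

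The paper circumvents this by first proving (vi) when every $f_n$ is simple---then every partial sum is simple, hence automatically premeasurable, and your argument via (iv) and (v) goes through. For general $f_n$ one uses Lemma~\ref{lemma} to write each $f_n=\sum_k s_{k,n}$ as a series of simple functions, flattens the double series $f=\sum_{n,k}s_{k,n}$ into a single series of simple functions via a bijection $\mathbb N\to\mathbb N\times\mathbb N$, applies the simple case, and then regroups (using the simple case again for each inner sum $\sum_k s_{k,n}=f_n$).
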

\begin{proof}
\begin{enumerate}[label=(\roman*)]
    \item Since $s\leq s$ it follows by the definition of $I$ that $J_{\mathbb{P}}(s)\leq I_{\mathbb{P}}(s)$. For every simple function $t$ such that $t\leq s$ we have by Proposition \ref{simple} that $J_{\mathbb{P}}(t)\leq J_{\mathbb{P}}(s)$ and therefore $I_{\mathbb{P}}(s)\leq J_{\mathbb{P}}(s)$. 
    \item This follows from the fact that for a simple function $s$ such that $s\leq f$ we also have $s\leq g$. 
    \item The ‘$\leq$’ inequality is clear. Now consider an $\epsilon \in (0,1]$ and simple functions $s$ and $t$ such that $s\leq f \leq t$ and such that $\lVert t-s\rVert_{\infty}\leq \epsilon$, which exist by Lemma \ref{lemma}. We find the following inequalities:
    
    $$J_{\mathbb{P}}(t)=J_{\mathbb{P}}(t-s+s) = J_{\mathbb{P}}(t-s)+J_{\mathbb{P}}(s)\leq J_{\mathbb{P}}(\epsilon) + J_{\mathbb{P}}(s) \leq \epsilon + I_{\mathbb{P}}(f).$$
    Here we used Proposition \ref{simple} and the definition of $I_{\mathbb{P}}$. The other inequality now follows. 
    \item Let $\epsilon>0$. By the definition of $J_{\mathbb{P}}$ and \ref{inf}, there exist simple functions $s_f,t_f,s_g$ and $t_g$ such that $s_f\leq f \leq t_f$ and $s_g\leq g \leq t_g$ and such that $$J_{\mathbb{P}}(t_f)-\epsilon \leq I_{\mathbb{P}}(f)\leq J_{\mathbb{P}}(s_f)+\epsilon$$ and $$J_{\mathbb{P}}(t_g)-\epsilon \leq I_{\mathbb{P}}(g)\leq J_{\mathbb{P}}(t_f)+\epsilon.$$
    
    Since $s_f+s_g$ and $(t_f+t_g) \wedge 1$ are simple functions by Proposition \ref{simple} and $s_f+s_g\leq f+g \leq (t_f+t_g)\wedge 1$, we find that $$I_{\mathbb{P}}(f+g)-2\epsilon \leq J_{\mathbb{P}}((t_f+t_g)\wedge 1) - 2\epsilon \leq J_{\mathbb{P}}(t_f)+J_{\mathbb{P}}(t_g)-2\epsilon \leq I_{\mathbb{P}}(f)+I_{\mathbb{P}}(g)$$ and $$I_{\mathbb{P}}(f)+I_{\mathbb{P}}(g)\leq J_{\mathbb{P}}(s_f)+J_{\mathbb{P}}(s_g) + 2\epsilon = J_{\mathbb{P}}(s_f+s_g)+2\epsilon \leq I_{\mathbb{P}}(f+g)+2\epsilon.$$
    Here we again used Proposition \ref{simple} and \ref{inf} and the definition of $I_{\mathbb{P}}$. The result follows by letting $\epsilon\to 0$.
    \item Since $f_n\leq f$ we have by \ref{order} that $I_{\mathbb{P}}(f_n)\leq I_{\mathbb{P}}(f)$ and therefore $\lim_{n\to \infty}I_{\mathbb{P}}(f_n)\leq I_{\mathbb{P}}(f)$. Now consider a simple function $s=\sum_{k=1}^ma_k1_{A_k}$ such that $s\leq f$. For $r\in [0,1)$ and a natural number $n$ define the set $$E_{n,r}:=\left\{x\in X\mid f_n(x)\geq rs(x)\right\}.$$
    
    Note that $E_{n,r}=\bigcup_{k=1}^m (f_n^{-1}([ra_k,1])\cap A_{k})$ and therefore it is a subset in $\mathcal{B}_X$.
    
    The function $rs1_{E_{n,r}}$ is simple and satisfies $rs1_{E_{n,r}}\leq f_n$. It follows now that $$I_{\mathbb{P}}(f_n)\geq J_{\mathbb{P}}(rs1_{E_{n,r}})=\sum_{k=1}^mra_k\mathbb{P}(A_k\cap E_{n,r}).$$
    
    Taking the limit $n\to \infty$ on both sides of the inequality gives us that \begin{equation}\lim_{n}I_{\mathbb{P}}(f_n)\geq \sum_{k=1}^mra_k \left(\lim_{n\to \infty}\mathbb{P}(A_k\cap E_{n,r})\right) \label{1}.\end{equation}
    
    It is easy to verify that $(E_{n,r})_{n\in \mathbb{N}}$ increases to $X$. Now define $F_{1,r}:=E_{1,r}$ and $F_{n,r}:=E_{n,r}\setminus F_{n-1,r}$ and note that $(F_{n,r}\cap A_k)_{n\in \mathbb{N}}$ is a collection of pairwise disjoint subsets in $\mathcal{B}_X$ such that their union is $A_k$. Using that $\mathbb{P}$ is a probability premeasure we find that \begin{equation}
     \mathbb{P}(A_k)=\sum_{n=1}^\infty\mathbb{P}(F_{n,r}\cap A_k)=\lim_{n\to\infty}\mathbb{P}\left(\bigcup_{l=1}^nF_{l,r}\cap A_k\right) = \lim_{n\to \infty}\mathbb{P}(E_{n,r}\cap A_k)\label{2}
    \end{equation}
    Combining \eqref{1} and \eqref{2} gives us $\lim_{n\to \infty}I_{\mathbb{P}}(f_n)\geq rJ(s)$ for all $r\in [0,1)$, which implies that $$\lim_{n\to \infty}I_{\mathbb{P}}(f_n)\geq J_{\mathbb{P}}(s).$$ 
    Since this holds for any simple function $s$ such that $s\leq f$, we can conclude that $\lim_{n\to \infty}I_{\mathbb{P}}(f_n)\geq I_{\mathbb{P}}(f)$.
    \item We first show the result for the case that every $f_n$ is simple. Because the finite sum of simple functions is again a simple function we can use \ref{fsum} and \ref{monotone} to show the following equalities \footnote{Note that the finite sum of premeasurable maps $X\to [0,1]$ is not necessarily premeasurable. Consider for example the set $[0,1]^2$ endowed with the Boolean algebra generated by open rectangles. The two projection maps $\pi_1,\pi_2:[0,1]^2\to [0,1]$ are premeasurable, but their sum is not.}:
    
    \[I_{\mathbb{P}}(f)=I_{\mathbb{P}}\left(\lim_{n\to \infty} \sum_{k=1}^nf_k\right)=\lim_{n\to \infty}I_{\mathbb{P}}\left(\sum_{k=1}^nf_k\right)=\lim_{n\to \infty}\sum_{k=1}^nI_{\mathbb{P}}(f_k)=\sum_{n=1}^\infty I_{\mathbb{P}}(f_k).\]
    
    For the general case we use Lemma \ref{lemma} to write $f_n$ as $\sum_{k=1}^\infty s_{k,n}$, where $s_{k,n}$ is a simple function for every $k$ and $n$. This gives us that $$f=\sum_{n=1}^\infty \sum_{k=1}^\infty s_{k,n}.$$
    
    Let $\psi:\mathbb{N}\setminus\{0\}\to \mathbb{N}\setminus\{0\}\times\mathbb{N}\setminus\{0\}$ be a bijection. We can now rewrite $f$ as follows:
    
    $$f=\sum_{m=1}^\infty s_{\psi(m)}.$$
    
    By the above it follows now that $$I_{\mathbb{P}}(f)=\sum_{m=1}^\infty I_{\mathbb{P}}(s_{\psi(m)}) = \sum_{n=1}^\infty\sum_{k=1}^\infty I_{\mathbb{P}}(s_{k,n}) = \sum_{n=1}^\infty I_{\mathbb{P}}(f_n).$$
\end{enumerate}
\end{proof}
\begin{rem}
In the case that $\mathcal{B}_X$ is a $\sigma$-algebra, $I_{\mathbb{P}}$ becomes the usual integration operation and Proposition \ref{integral}\ref{monotone} and \ref{series} become the usual monotone convergence theorems. Therefore we will also for the other cases write $\int_Xf\text{d}\mathbb{P}$ or $\int_X f(x)\mathbb{P}(\text{d}x)$ for $I_{\mathbb{P}}(f)$.
\end{rem}
\begin{prop}\label{representation0}
Let $I:\textbf{\textup{PreMble}}(X,[0,1])\to [0,1]$ be a map such that $I(1)=1$. If $I(f+g)=I(f)+I(g)$ for all premeasurable maps $f,g:X\to [0,1]$ such that also $f+g\in \textbf{\textup{PreMble}}(X,[0,1])$, then there exists a unique finitely additive probability premeasure $\mathbb{P}$ such that \[I=I_{\mathbb{P}}.\]
\end{prop}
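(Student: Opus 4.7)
The plan is to define $\mathbb{P}(A) := I(1_A)$ for $A \in \mathcal{B}_X$ and then verify successively that $\mathbb{P}$ is a finitely additive probability premeasure, that $I$ and $I_{\mathbb{P}}$ agree on simple functions, and finally that they agree on all premeasurable maps. Uniqueness is then immediate: any $\mathbb{P}'$ with $I = I_{\mathbb{P}'}$ must satisfy $\mathbb{P}'(A) = I_{\mathbb{P}'}(1_A) = I(1_A)$.

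Checking that $\mathbb{P}$ is a finitely additive probability premeasure is short: $\mathbb{P}(X) = I(1) = 1$, and for disjoint $A, B \in \mathcal{B}_X$ one has $1_A + 1_B = 1_{A \cup B} \in \textbf{PreMble}(X,[0,1])$, so the additivity hypothesis on $I$ yields $\mathbb{P}(A \cup B) = I(1_A) + I(1_B) = \mathbb{P}(A) + \mathbb{P}(B)$.

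To compare $I$ with $J_{\mathbb{P}}$ on simple functions, I would first establish $I(r 1_A) = r\mathbb{P}(A)$ for every $r \in [0,1]$ and $A \in \mathcal{B}_X$. Applying additivity to the decomposition $1_A = \sum_{k=1}^n (1/n) 1_A$ gives $I((1/n)1_A) = (1/n)\mathbb{P}(A)$, and hence $I(q 1_A) = q\mathbb{P}(A)$ for rational $q \in [0,1]$. Monotonicity of $I$ (whenever $f \leq g$ and $g-f \in \textbf{PreMble}(X,[0,1])$, additivity gives $I(g) = I(f) + I(g-f) \geq I(f)$) then lets me sandwich arbitrary $r \in [0,1]$ between rationals to conclude $I(r 1_A) = r\mathbb{P}(A)$. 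For a general simple function $s = \sum_{k=1}^n a_k 1_{A_k}$, one may assume the $A_k$ pairwise disjoint by a common refinement, and iterated additivity then yields $I(s) = \sum a_k \mathbb{P}(A_k) = J_{\mathbb{P}}(s)$.

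For a general premeasurable $f : X \to [0,1]$, I would invoke Lemma \ref{lemma} to obtain simple functions $s_n \leq f \leq t_n$ with $\lVert t_n - s_n\rVert_\infty \to 0$. Granting premeasurability of the differences $f-s_n$ and $t_n - f$, the monotonicity observation gives $I(s_n) \leq I(f) \leq I(t_n)$; combining with $I(s_n) = J_{\mathbb{P}}(s_n) \leq I_{\mathbb{P}}(f) \leq J_{\mathbb{P}}(t_n) = I(t_n)$ and $J_{\mathbb{P}}(t_n) - J_{\mathbb{P}}(s_n) = J_{\mathbb{P}}(t_n - s_n) \leq \lVert t_n - s_n\rVert_\infty \to 0$ squeezes $I(f)$ and $I_{\mathbb{P}}(f)$ to the same limit. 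The main technical obstacle is precisely this premeasurability of a difference $f - s$ with $s$ simple and $s \leq f$: on each level set $A_k = \{s = a_k\}$ one has $(f-s)|_{A_k} = f|_{A_k} - a_k$, and the preimage of an open interval $(c,d) \subseteq [0,1]$ under $f-s$ decomposes as $\bigcup_k A_k \cap f^{-1}\bigl((c+a_k, d+a_k) \cap [0,1]\bigr)$, which lies in $\mathcal{B}_X$ since the algebra on $[0,1]$ generated by open intervals, being closed under complements and finite intersections, contains every half-open and closed subinterval of $[0,1]$.
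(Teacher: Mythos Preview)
Your proof is correct and follows essentially the same route as the paper's: define $\mathbb{P}(A):=I(1_A)$, use additivity to get $I(qf)=qI(f)$ for rational $q$ and hence agreement of $I$ with $J_{\mathbb{P}}$ on simple functions, then squeeze via Lemma~\ref{lemma} and monotonicity. The only differences are cosmetic: the paper stays with rational-coefficient simple functions (avoiding your passage to arbitrary real $r$) and simply asserts the premeasurability of $f-s$ that you take the trouble to justify.
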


\begin{proof}
Define $\mathbb{P}(A):=I(1_A)$ for all $A$ in $\Sigma_X$. Clearly, $\mathbb{P}$ is a finitely additive probability premeasure. 

We have $I(qf)=qI(f)$ for all $q\in [0,1]\cap \mathbb{Q}$ and for all premeasurable functions $f:X\to [0,1]$. For a simple function $s$ such that $s\leq f$ we have that $f-s$ is a premeasurable function and it follows that $I(s)\leq I(f)$.

For a premeasurable function $f:X\to [0,1]$ and for $\epsilon>0$ there exists a simple function $s=\sum_{k=1}^ma_k1_{A_k} \leq f$ such that $I_{\mathbb{P}}(f)\leq \sum_{k=1}^m a_k \mathbb{P}(A_k) +\epsilon.$
We can assume that $a_k$ is an element of $[0,1]\cap \mathbb{Q}$ for every $k$. We now see that 

$$I_\mathbb{P}(f)\leq \sum_{k=1}^ma_kI(1_{A_k})+\epsilon =I(s)+\epsilon \leq I(f)+\epsilon.$$
Letting $\epsilon\to 0$ gives us that $I_{\mathbb{P}}(f)\leq I(f)$. Using Proposition \ref{integral}\ref{inf} we obtain the other inequality in a similar way.

Let $\mathbb{P}'$ be another finitely additive probability premeasure with this property; then $$\mathbb{P}'(A)=I_{\mathbb{P}'}(1_A)=I(1_A)=I_\mathbb{P}(1_A)=\mathbb{P}(A)$$ for every $A$ in $\mathcal{B}_X$. This implies $\mathbb{P}'=\mathbb{P}$.
\end{proof}
\begin{prop}\label{representation}
Let $I:\textbf{\textup{PreMble}}(X,[0,1])\to [0,1]$ be a map such that $I(1)=1$. If $I\left(\sum_{n=1}^\infty f_n\right )=\sum_{n=1}^\infty I(f_n)$ for every collection of premeasurable maps $(f_n:X\to [0,1])_{n=1}^\infty$ such that $f:=\sum_{n\in A} f_n$ is also an element of $\textbf{\textup{PreMble}}(X,[0,1])$ for every finite or cofinite subset $A$ of $\mathbb{N}\setminus\{0\}$, then there exists a unique probability premeasure $\mathbb{P}$ such that \[I=I_{\mathbb{P}}.\]
\end{prop}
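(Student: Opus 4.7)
The plan is to reduce to Proposition \ref{representation0} and then bootstrap: first I would show that the countable-additivity hypothesis on $I$ implies finite additivity, which by Proposition \ref{representation0} yields a \emph{finitely} additive probability premeasure $\mathbb{P}$ with $I=I_{\mathbb{P}}$; then I would upgrade $\mathbb{P}$ to a genuine probability premeasure by invoking the hypothesis on a sequence of indicator functions.

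For the reduction step, given premeasurable maps $f,g:X\to[0,1]$ with $f+g\in \textbf{PreMble}(X,[0,1])$, I would apply the hypothesis to the sequence $f_1=f$, $f_2=g$, $f_n=0$ for $n\geq 3$. Every finite or cofinite subset of $\mathbb{N}\setminus\{0\}$ yields a partial sum equal to one of $0,\,f,\,g,\,f+g$, all of which lie in $\textbf{PreMble}(X,[0,1])$. Hence the hypothesis gives $I(f+g)=I(f)+I(g)$. Proposition \ref{representation0} then produces a unique finitely additive probability premeasure $\mathbb{P}$ satisfying $\mathbb{P}(A)=I(1_A)$ for all $A\in\mathcal{B}_X$ and $I=I_{\mathbb{P}}$.

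For the upgrade, let $(A_n)_{n=1}^\infty$ be a pairwise disjoint family in $\mathcal{B}_X$ whose union $B:=\bigcup_n A_n$ is in $\mathcal{B}_X$, and set $f_n:=1_{A_n}$. For any finite $F\subseteq\mathbb{N}\setminus\{0\}$, the sum $\sum_{n\in F}1_{A_n}$ equals $1_{\bigcup_{n\in F}A_n}$, and the cofinite sum $\sum_{n\notin F}1_{A_n}$ equals $1_{B\setminus\bigcup_{n\in F}A_n}$; both index sets lie in $\mathcal{B}_X$ because it is an algebra, so the partial sums are premeasurable. The hypothesis on $I$ then delivers
\[\mathbb{P}(B)=I(1_B)=I\!\left(\sum_{n=1}^\infty 1_{A_n}\right)=\sum_{n=1}^\infty I(1_{A_n})=\sum_{n=1}^\infty \mathbb{P}(A_n),\]
so $\mathbb{P}$ is countably additive. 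Uniqueness is immediate, since any probability premeasure $\mathbb{P}'$ with $I=I_{\mathbb{P}'}$ must satisfy $\mathbb{P}'(A)=I(1_A)=\mathbb{P}(A)$ on $\mathcal{B}_X$.

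There is no serious obstacle: all the analytic content has already been absorbed into Proposition \ref{integral} and Proposition \ref{representation0}. The only subtlety is organizational — one must use the same countable-additivity axiom twice, once degenerately (with trailing zeros) to extract finite additivity, and once on a disjoint sequence of indicators to obtain countable additivity of $\mathbb{P}$. The algebra axiom on $\mathcal{B}_X$ is precisely what is needed to make the cofinite partial sums premeasurable so that the hypothesis becomes applicable.
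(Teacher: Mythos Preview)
Your proposal is correct and matches the paper's own proof essentially step for step: reduce to Proposition~\ref{representation0} to obtain a finitely additive $\mathbb{P}$ with $I=I_{\mathbb{P}}$, then apply the countable-additivity hypothesis to the sequence $(1_{A_n})_n$ to upgrade $\mathbb{P}$ to a probability premeasure. In fact you are more explicit than the paper, which silently assumes finite additivity follows from the hypothesis and does not spell out why the cofinite partial sums of indicators are premeasurable.
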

\begin{proof}
By Proposition \ref{representation0} there exists a unique finitely additive probability measure $\mathbb{P}$ such that $I=I_{\mathbb{P}}$. For a pairwise disjoint collection $(A_n)_{n=1}^\infty$ in $\mathcal{B}_X$ such that $\bigcup_{n=1}^\infty A_n$ is also an element of $\mathcal{B}_X$, we see that $\sum_{n\in A}1_{A_n}$ is premeasurable for every finite or cofinite subset $A$ of $\mathbb{N}\setminus\{0\}$. It follows now that $$\mathbb{P}\left(\bigcup_{n=1}^\infty A_n\right) = I_\mathbb{P}\left(\sum_{n=1}^\infty1_{A_n}\right)=\sum_{n=1}^\infty I_{\mathbb{P}}(A_n)=\sum_{n=1}^\infty \mathbb{P}(A_n)$$
\end{proof}
\begin{cor}\label{repcor}
Let $I:\textbf{Mble}(X,[0,1])\to [0,1]$ be a map such that $I(1)=1$. If $I\left(\sum_{n=1}^\infty f_n\right)=\sum_{n=1}^\infty I(f_n)$ for every collection of measurable maps $(f_n:X\to [0,1])_{n=1}^\infty$ such that $\sum_{n=1}^\infty f_n \leq 1$, then there exists a unique probability measure $\mathbb{P}$ such that $$I=I_\mathbb{P}.$$
\end{cor}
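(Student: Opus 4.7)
The plan is to deduce this corollary directly from Proposition \ref{representation}. Since $\mathcal{B}_X$ is a $\sigma$-algebra, the notions of premeasurable and measurable map coincide, so any probability premeasure produced by Proposition \ref{representation} is automatically a probability measure. It therefore suffices to verify that the hypothesis of Proposition \ref{representation} holds under the assumptions of the corollary.

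To that end, I would start with an arbitrary sequence $(f_n:X\to[0,1])_{n=1}^\infty$ of measurable maps satisfying the hypothesis of Proposition \ref{representation}, i.e. such that $\sum_{n\in A}f_n$ is measurable for every finite or cofinite subset $A$ of $\mathbb{N}\setminus\{0\}$. Taking $A=\mathbb{N}\setminus\{0\}$ in particular shows that the total sum $\sum_{n=1}^\infty f_n$ is measurable with values in $[0,1]$, and hence is bounded above by $1$. The hypothesis of the corollary therefore applies to this sequence and yields $I\bigl(\sum_{n=1}^\infty f_n\bigr)=\sum_{n=1}^\infty I(f_n)$, as required.

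Applying Proposition \ref{representation} now produces a unique probability premeasure $\mathbb{P}$ on $(X,\mathcal{B}_X)$ with $I=I_{\mathbb{P}}$; as noted, this $\mathbb{P}$ is automatically a probability measure, and uniqueness transfers directly from the proposition. There is no real obstacle here, as this corollary is essentially a restatement of Proposition \ref{representation} in the more familiar $\sigma$-algebra setting, where the side conditions on measurability of partial sums become automatic.
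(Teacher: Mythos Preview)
Your proposal is correct and matches the paper's intent: the paper states Corollary~\ref{repcor} immediately after Proposition~\ref{representation} with no separate proof, precisely because it is the specialisation to the $\sigma$-algebra case, and you have spelled out that specialisation accurately. Your final remark is slightly imprecise---the partial-sum measurability conditions do not so much ``become automatic'' as simply get subsumed: any sequence meeting the proposition's side conditions has total sum in $\textbf{Mble}(X,[0,1])$, hence $\leq 1$, so the corollary's hypothesis applies to it---but the argument itself is sound.
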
 \section{Probability monads on categories of premeasurable spaces}\label{probabilitymonadsoncategoriesofpremeasurablespaces}
In this section we will present several monads of (finitely additive) probability (pre)measures on categories of (pre)measurable spaces. We explain how each of these monads can be constructed as a codensity monad.

We will denote the category of measurable spaces and measurable maps  by $\textbf{Mble}$ and the category of premeausrable spaces and premeasurable maps by $\textbf{PreMble}$.  We write $\textbf{Set}_c$ for the category of countable sets and functions. 

\subsection{Giry monad of probability measures}\label{4.1}
Here we will discuss the monad of probability measures on the category of measurable spaces, which is known as the Giry monad. We show how this monad arises as the codensity monad of a functor $G:\textbf{Set}_c\to \textbf{Mble}$.

Let $(X,\Sigma_X)$ be a measurable space and let $\mathcal{G}X$ be the set of all probability measures on $X$. For a measurable subset $A$ of $X$ let $\text{ev}_A:\mathcal{G}X\to [0,1]$ denote the function defined by the assignment $\mathbb{P}\mapsto \mathbb{P}(A)$. The set $\mathcal{G}X$ becomes a measurable space by endowing it with the smallest $\sigma$-algebra that makes $\text{ev}_A$ measurable for all $A$ in $\Sigma_X$. We will denote this measurable space also by $\mathcal{G}X$.

Let $f:X\to Y$ be a measurable map between measurable spaces $X$ and $Y$. Every probability measure $\mathbb{P}$ induces a probability measure $\mathbb{P}\circ f^{-1}$ on $Y$ which is defined by $$\mathbb{P}\circ f^{-1}(B):=\mathbb{P}(f^{-1}(B))$$ for all $B$ in $\Sigma_Y$. This probability measure is called the \textbf{pushforward measure}. The assignment $\mathbb{P}\mapsto \mathbb{P}\circ f^{-1}$ defines a map $\mathcal{G}X\to \mathcal{G}Y$, which we will denote by $\mathcal{G}f$. It can be checked that $\mathcal{G}f$ is measurable. 

Let $\textbf{Mble}$ be the category of measurable spaces and measurable maps. The assignments $X\mapsto \mathcal{G}X$ and $f\mapsto Gf$ define a functor $\mathcal{G}:\textbf{Mble}\to \textbf{Mble}$.

For every measurable space $X$ we have a measurable map $\eta_X:X\to \mathcal{G}X$ that sends an element $x\in X$ to the probability measure $\delta_x$ that is defined by \begin{equation}\label{unit}
    \delta_x(A):=\begin{cases}1 \text{ if } x\in A \\
0 \text{ otherwise.}\end{cases}\end{equation}
Moreover, these maps form a natural transformation $\eta:1_{\textbf{Mble}}\to \mathcal{G}$.

We also have a measurable map $\mu_X:\mathcal{G}\mathcal{G}X\to \mathcal{G}X$ that sends a probability measure $\textbf{P}$ on $\mathcal{G}X$ to the probability measure $\mu_X(\textbf{P})$ on $X$ that is defined by \begin{equation}\label{multiplication}
    \mu_X(\textbf{P})(A):=\int_{\mathcal{G}X}\mathbb{P}(A)\textbf{P}(\text{d}\mathbb{P})\end{equation}
for all $A\in \Sigma_X$. Also these maps form a natural transformation $\mu:\mathcal{G}\mathcal{G}\to\mathcal{G}$. 
\begin{prop}[Giry, \cite{giry}]\label{Giry}
The triple $(\mathcal{G},\eta,\mu)$ is a monad. 
\end{prop}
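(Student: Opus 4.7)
The plan is to reduce the monad axioms to integral-level identities by invoking the integral representation theorem (Corollary \ref{repcor}): a probability measure on $X$ is determined by its associated integration functional on $\textbf{Mble}(X,[0,1])$, so each of the three equalities $\mu_X\circ \eta_{\mathcal{G}X}=1_{\mathcal{G}X}$, $\mu_X\circ \mathcal{G}\eta_X=1_{\mathcal{G}X}$, and $\mu_X\circ \mu_{\mathcal{G}X}=\mu_X\circ \mathcal{G}\mu_X$ may be checked by pairing both sides with an arbitrary measurable $f:X\to[0,1]$. The central technical step, from which everything else will follow, is the change-of-variables formula
\[\int_X f\,\mathrm{d}\mu_X(\mathbf{P})=\int_{\mathcal{G}X}\Bigl(\int_X f\,\mathrm{d}\mathbb{P}\Bigr)\mathbf{P}(\mathrm{d}\mathbb{P}).\]
For indicator functions this is the defining formula \eqref{multiplication}. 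Extending to simple functions is linear (Proposition \ref{integral}\ref{fsum}), and extending to arbitrary measurable $f$ uses the monotone approximation of Lemma \ref{lemma} together with monotone convergence (Proposition \ref{integral}\ref{monotone}). This same argument simultaneously shows that $\mathbb{P}\mapsto \int_X f\,\mathrm{d}\mathbb{P}$ is measurable on $\mathcal{G}X$ as an increasing limit of maps of the form $\sum a_k\,\mathrm{ev}_{A_k}$, which is what is needed to make the right-hand side of the formula meaningful in the first place and what underlies measurability of $\mu_X$.

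Given the change-of-variables identity, the two unit laws become one-line calculations. For $\mathbb{P}\in \mathcal{G}X$, applying the formula with $\mathbf{P}=\delta_{\mathbb{P}}=\eta_{\mathcal{G}X}(\mathbb{P})$ collapses the outer integral, yielding $\int_X f\,\mathrm{d}(\mu_X\circ \eta_{\mathcal{G}X})(\mathbb{P})=\int_X f\,\mathrm{d}\mathbb{P}$. For the other unit law, $\mathcal{G}\eta_X(\mathbb{P})=\mathbb{P}\circ \eta_X^{-1}$, so a standard pushforward argument (again proved by the simple-function/monotone-limit template) combined with $\int_X f\,\mathrm{d}\delta_x=f(x)$ yields $\int_X f\,\mathrm{d}(\mu_X\circ \mathcal{G}\eta_X)(\mathbb{P})=\int_X f\,\mathrm{d}\mathbb{P}$. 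Associativity, evaluated on $\mathbf{Q}\in \mathcal{G}\mathcal{G}\mathcal{G}X$, becomes the iterated-integral identity
\[\int_{\mathcal{G}\mathcal{G}X}\int_{\mathcal{G}X}\int_X f\,\mathrm{d}\mathbb{P}\,\mathbf{P}(\mathrm{d}\mathbb{P})\,\mathbf{Q}(\mathrm{d}\mathbf{P}),\]
which both sides produce after two applications of the change-of-variables formula (the right side needing in addition that pushforward along $\mu_X$ corresponds to substituting $\mathbb{P}\mapsto \int_X f\,\mathrm{d}\mathbb{P}$ inside the integrand).

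The main obstacle is the change-of-variables formula together with the accompanying measurability statement for $\mathbb{P}\mapsto \int_X f\,\mathrm{d}\mathbb{P}$; once these are in hand, naturality of $\eta$ and $\mu$ and all three monad axioms are routine formal manipulations. The extension from indicators to general $[0,1]$-valued measurable functions is exactly the kind of argument Proposition \ref{integral} was set up to handle, so the proof ultimately rests on the machinery already developed in Section \ref{premeasurablespaces}.
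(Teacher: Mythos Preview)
The paper does not supply a proof of this proposition; it is stated with attribution to Giry \cite{giry} and then used as input to the later codensity arguments. Your sketch is correct and follows the standard route (essentially Giry's): establish the change-of-variables identity $\int_X f\,\mathrm{d}\mu_X(\mathbf{P})=\int_{\mathcal{G}X}\text{ev}_f\,\mathrm{d}\mathbf{P}$ by the indicator/simple/monotone-limit template, observe along the way that $\text{ev}_f$ is measurable, and then reduce each monad axiom to an integral identity. One minor remark: invoking Corollary \ref{repcor} to conclude that two probability measures agree is harmless but heavier than needed---equality on all measurable sets (i.e.\ checking against indicators $1_A$) already suffices, and that is immediate from the definition of a measure without any representation theorem.
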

The monad in Proposition \ref{Giry} is called the \textbf{Giry monad (of probability measures)}. 

Every countable set $A$ can be turned into a measurable spaces, namely the set $A$ endowed with the whole powerset of $A$ as $\sigma$-algebra. Every function of countable sets becomes measurable with respect to these $\sigma$-algebras. This leads to a functor $j:\textbf{Set}_c\to \textbf{Mble}$. Define the functor $G$ as $$\textbf{Set}_c\xrightarrow{j}\textbf{Mble}\xrightarrow{\mathcal{G}}\textbf{Mble}$$

This means that for a countable set $A$ the underlying set of $GA$ is equal to $$\{(p_a)_a\in [0,1]^A\mid \sum_{a\in A}p_a =1\}.$$ For a map $f:X\to GA$ we will use the notation $f_a$ to mean $\text{ev}_{\{a\}}\circ f$. Note that a map $f:X\to GA$ is measurable if and only if $f_a$ is measurable for every $a$. 

For a map of countable sets $f:A\to B$ the measurable map $Gf:GA\to GB$ is given by the assignment $$(p_a)_{a\in A}\mapsto \left(\sum_{a\in f^{-1}(b)}p_a\right)_{b\in B}.$$

\begin{thm}
The Giry monad is the codensity monad of $G$. \label{Girymonad}
\end{thm}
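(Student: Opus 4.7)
The plan is to invoke Proposition \ref{Kan}: since $\textbf{Set}_c$ is essentially small and $\textbf{Mble}$ is complete, the codensity monad exists with
\[T^G(X)=\lim\bigl((X\downarrow G)\xrightarrow{U}\textbf{Set}_c\xrightarrow{G}\textbf{Mble}\bigr),\]
and I will identify this limit with $\mathcal{G}X$ via a canonical cone built from pushforward followed by barycentre. For each object $(A,f)$ of $X\downarrow G$ (i.e.\ a measurable map $f\colon X\to GA$ with $A$ countable), define $\varphi_f\colon \mathcal{G}X\to GA$ by $\varphi_f:=\mu_{jA}\circ \mathcal{G}f$, equivalently $\varphi_f(\mathbb{Q})(a)=\int_X f_a\,d\mathbb{Q}$. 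Functoriality of $\mathcal{G}$ and naturality of $\mu$ promote $(\varphi_f)$ into a cone over the diagram.

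To verify universality I would construct, for any cone $(\xi_f\colon Y\to GA)_{(A,f)}$, the factorisation $\alpha\colon Y\to \mathcal{G}X$ pointwise via Corollary \ref{repcor}. Fix $y\in Y$ and set
\[I\colon \textbf{Mble}(X,[0,1])\to [0,1],\qquad I(h):=\bigl(\xi_{\tilde h}(y)\bigr)(0),\]
where $\tilde h\colon X\to G\{0,1\}$ sends $x$ to $(h(x),1-h(x))$. The main obstacle is translating the abstract cone condition into the hypotheses of Corollary \ref{repcor}. Normalization $I(1)=1$ follows by applying the cone condition to the inclusion $\{0\}\hookrightarrow\{0,1\}$, together with the observation that $G\{0\}$ is a singleton. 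For countable additivity, given measurable $(h_n)_n$ with $\sum_n h_n\leq 1$, I would bundle them into a single measurable $H\colon X\to G(\N\cup\{\infty\})$ by $H(x)(n)=h_n(x)$ and $H(x)(\infty)=1-\sum_n h_n(x)$; applying the cone condition to the maps $\N\cup\{\infty\}\to\{0,1\}$ that isolate a single coordinate $n$ yields $I(h_n)=(\xi_H(y))(n)$, while the map collapsing all finite indices to $0$ yields $I(\sum_n h_n)=\sum_n(\xi_H(y))(n)$. Corollary \ref{repcor} then delivers the unique probability measure $\alpha(y)$ on $X$ representing $I$.

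Measurability of $\alpha\colon Y\to \mathcal{G}X$ reduces to measurability of $\mathrm{ev}_B\circ\alpha$ for each $B\in\Sigma_X$; this composite equals $\mathrm{ev}_{\{0\}}\circ\xi_{\tilde 1_B}$, which is measurable because $\xi_{\tilde 1_B}$ is a cone component. Uniqueness of $\alpha$ and the identity $\varphi_f(\alpha(y))(a)=\int_X f_a\,d\alpha(y)=I(f_a)=(\xi_f(y))(a)$ together show that $\varphi_f\circ\alpha=\xi_f$, yielding a natural isomorphism $\mathcal{G}\cong T^G$. To upgrade this to an isomorphism of monads, I would observe that the unit of $T^G$ is characterised by $\varphi_f\circ\eta_X=f$ for all $(A,f)$; since $\varphi_f(\delta_x)=\mu_{jA}(\delta_{f(x)})=f(x)$, the Giry unit $x\mapsto\delta_x$ satisfies this. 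An analogous universal equation pins down the multiplication, which matches $\mu_X(\mathbf{P})(A)=\int_{\mathcal{G}X}\mathbb{P}(A)\,\mathbf{P}(d\mathbb{P})$ by the standard Fubini-type identity for $\mathcal{G}$.
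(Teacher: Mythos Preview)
Your proposal is correct and follows essentially the same route as the paper: build the cone via $\mu_{jA}\circ\mathcal{G}f$, then verify universality by extracting an operator $I$ from the cone components over $G\{0,1\}$, establish normalization and countable additivity via cone conditions over maps $\{0\}\to\{0,1\}$ and $\mathbb{N}\cup\{\infty\}\to\{0,1\}$, and invoke Corollary~\ref{repcor}. The only differences from the paper are cosmetic (index conventions and the choice of $\mathbb{N}\cup\{\infty\}$ versus $\mathbb{N}$ with $0$ as the remainder slot); the one step you leave implicit---that $I(f_a)=(\xi_f(y))(a)$ for general $f\colon X\to GA$---is handled in the paper by the same isolating-coordinate trick you already used, applied to the map $A\to\{0,1\}$ singling out $a$.
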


\begin{proof}
Proposition \ref{Kan} tells us that the codensity monad of $G$ exists.
We will now show that for all measurable spaces $X$, \[\mathcal{G}(X)=\lim(X\downarrow G\xrightarrow{U} \textbf{Set}_c\xrightarrow{G}\textbf{Mble}).\] 
Proposition \ref{Kan} then implies that $T^G(X)\cong \mathcal{G}(X)$ for all measurable spaces $X$.

For a measurable map $f:X\to GA$ define a map $p_f$ as $$\mathcal{G}X\xrightarrow{\mathcal{G}f}\mathcal{G}\mathcal{G}jA\xrightarrow{\mu_{jA}}\mathcal{G}jA=GA.$$ This means that for $\mathbb{P}\in \mathcal{G}X$, $$p_f(\mathbb{P})=\left(\int_X f_a\text{d}\mathbb{P}\right)_{a\in A}.$$
Consider a commutative triangle \[\begin{tikzcd}
	& X \\
	GA && GB.
	\arrow["f"', from=1-2, to=2-1]
	\arrow["Gs"', from=2-1, to=2-3]
	\arrow["g", from=1-2, to=2-3]
\end{tikzcd}\]
We have the following equalities: 
\begin{align*}
    Gs\circ p_f & =Gs\circ \mu_{jA}\circ \mathcal{G}f\\
    & = \mathcal{G}js\circ \mu_{jA}\circ \mathcal{G}f\\
    & = \mu_{jB}\circ \mathcal{G}\mathcal{G}js\circ \mathcal{G}f\\
    & = \mu_{jB}\circ \mathcal{G}g = p_g
\end{align*} 

In more measure theoretic terms this means that for a probability measure $\mathbb{P}$ on $X$ and an element $b$ in $B$ we have the following: 
\begin{align*}
(Gs\circ p_f)(\mathbb{P})_b & = \sum_{a\in s^{-1}(b)} p_f(\mathbb{P})_a\\
& = \sum_{a\in s^{-1}(b)} \int_X f_a\text{d}\mathbb{P}\\
& = \int_X \sum_{a\in s^{-1}(b)}f_a\text{d}\mathbb{P}\\
& = \int_X (Gs\circ f)_b\text{d}\mathbb{P}\\
& = \int_X g_b\text{d}\mathbb{P}=p_g(\mathbb{P})_b.
\end{align*}

We can conclude that $(\mathcal{G}X,(p_f)_f)$ forms a cone over the diagram $X\downarrow G \xrightarrow{U} \textbf{Set}_c\xrightarrow{G}\textbf{Mble}$. We will now show that this is the limiting cone of the diagram. To do this let us consider some cone $(Y,(q_f)_f)$ over the diagram. Let $\textbf{2}:=\{0,1\}$ and $\textbf{1}:=\{0\}$. For a measurable map $f:X\to [0,1]$ let $\widehat{f}:X\to G\textbf{2}$ be the measurable map that sends an element $x$ in $X$ to $(1-f(x),f(x))$.

For an element $y\in Y$ define a map $I_y:\textbf{Mble}(X,[0,1])\to [0,1]$ by $$I_y(f):=q_{\widehat{f}}(y)_1.$$

Let $t:\textbf{1}\to \textbf{2}$ be the map that sends $0$ to $1$ and let $e$ be the unique measurable map $X\to G\textbf{1}$. We find the following commutative triangle: \[\begin{tikzcd}
	{} & X \\
	{G\textbf{2}} && {G\textbf{1}}.
	\arrow["{\widehat{1}}"', from=1-2, to=2-1]
	\arrow["Gt", from=2-3, to=2-1]
	\arrow["e", from=1-2, to=2-3]
\end{tikzcd}\]
Because $(Y,(q_f)_f)$ is a cone over the diagram we also have that the following triangle commutes:
\[\begin{tikzcd}
	{} & Y \\
	{G\textbf{2}} && {G\textbf{1}}.
	\arrow["{q_{\widehat{1}}}"', from=1-2, to=2-1]
	\arrow["Gt", from=2-3, to=2-1]
	\arrow["{q_e}", from=1-2, to=2-3]
\end{tikzcd}\]
It now follows that $I_y(1)=q_{\widehat{1}}(y)_1=1$.

For a collection $(f_n:X\to [0,1])_{n=1}^\infty$ of measurable maps such that $f:=\sum_{n=1}^\infty f_n$ is also an element of $\textbf{Mble}(X,[0,1])$, let $h:X\to G\mathbb{N}$ be the measurable map defined by $h(x)_n:=f_n(x)$ for $n\geq 1$ and $h(x)_0:=1-f(x)$. For $n\geq 1$ let $s_n:\mathbb{N}\to \textbf{2}$ be the map that sends $n$ to $1$ and every other element to $0$. Let $s:\mathbb{N}\to \textbf{2}$ be the map that sends $0$ to $0$ and every other element to $1$. We have the following commutative diagrams: 

\[\begin{tikzcd}
	& X &&& X \\
	{G\mathbb{N}} && {G\textbf{2}} & {G\mathbb{N}} && {G\textbf{2}}
	\arrow["h"', from=1-2, to=2-1]
	\arrow["{\widehat{f_n}}", from=1-2, to=2-3]
	\arrow["{Gs_n}"', from=2-1, to=2-3]
	\arrow["h"', from=1-5, to=2-4]
	\arrow["Gs"', from=2-4, to=2-6]
	\arrow["{\widehat{f_0}}", from=1-5, to=2-6]
\end{tikzcd}\]
Therefore also the following triangles commute: 
\[\begin{tikzcd}
	& Y &&& Y \\
	{G\mathbb{N}} && {G\textbf{2}} & {G\mathbb{N}} && {G\textbf{2}}
	\arrow["{q_h}"', from=1-2, to=2-1]
	\arrow["{q_{\widehat{f_n}}}", from=1-2, to=2-3]
	\arrow["{Gs_n}"', from=2-1, to=2-3]
	\arrow["h"', from=1-5, to=2-4]
	\arrow["Gs"', from=2-4, to=2-6]
	\arrow["{q_{\widehat{f}}}", from=1-5, to=2-6]
\end{tikzcd}\]
It follows now that
\begin{equation}
    I_y(f_0)=q_{\widehat{f}}(y)_1=(Gs\circ q_h(y))_1=\sum_{n\in \mathbb{N}} q_h(y)_n \label{3}
\end{equation}
and that for every $n\geq 1$,
\begin{equation}
    I_y(f_n)=q_{\widehat{f_n}}(y)_1=(Gs_n\circ q_h(y))_1= q_h(y)_n. \label{4}
\end{equation}
Combining \eqref{3} and \eqref{4} gives us that $I_y(f)=\sum_{n\in \mathbb{N}} I_y(f_n)$. By Corollary \ref{repcor} it follows that there exists a unique probability measure $\mathbb{P}_y$ such that $I_y=I_{\mathbb{P}_y}$. The assignment $y\mapsto \mathbb{P}_y$ defines a map $q:Y\to\mathcal{G}X$. We have that $\text{ev}_A\circ q=(q_{\widehat{1_A}})_1$ for all $A\in \Sigma_X$ and therefore $q$ is measurable. 

Let $f:X\to GA$ be a measurable map and let $a$ be an element of $A$. Let $s_a:A\to \textbf{2}$ be the map that sends $a$ to $1$ and every other element to $0$. Since we have that $Gs_a\circ f = \widehat{f_a}$ we also have that $Gs_a\circ q_f=q_{\widehat{f_a}}$. In particular we find for every $y\in Y$ that $$I_y(f_a)=q_{\widehat{f_a}}(y)_1=(Gs_a\circ q_f(y))_1=q_f(y)_a.$$
Using this we obtain for every $y\in Y$ and for every $a\in A$ that $$p_f\circ q(y)_a=I_{\mathbb{P}_y}(f_a)=I_y(f)_a=q_f(y)_a.$$
This shows that $q$ is a morphism of cones from $(\mathcal{G}X,(p_f)_f)$ to $(Y,(q_f)_f)$. 

Let $\Tilde{q}:Y\to \mathcal{G}X$ be another morphism of cones. Then for every measurable subset $A$ of $X$ and for every $y$ in $Y$ we have that $$\Tilde{q}(y)(A)=(p_{\widehat{1}_A}\circ \Tilde{q}(y))_1=q_{\widehat{1_A}}(y)_1=q(y)(A).$$
This shows that $\Tilde{q}=q$ and therefore $(\mathcal{G}X,(p_f)_f)$ is the limiting cone over the diagram. This implies that $\mathcal{G}(X)\cong T^G(X)$ for all measurable spaces $X$. Moreover this induces a natural isomorphism $\mathcal{G}\cong T^G$. 

It is straightforward to check that the unit and multiplication of the codensity monad of $G$ are equal to the unit and multiplication of the Giry monad.
\end{proof}
\begin{rem}
 While Theorem \ref{Girymonad} states that $\text{Ran}_GG=\mathcal{G}$, it is also  true that $\text{Ran}_jG=\mathcal{G}$. This construction immediately gives probability measures as set functions, without using an integral representation theorem. 
\end{rem}
\begin{rem}
A different construction for the Giry monad as a codensity monad is given in \cite{Avery}. Avery shows that the codensity monad of the inclusion of the category of powers of the unit interval and affine maps in $\textbf{Mble}$ is isomorphic to the Giry monad.  
\end{rem}

\subsection{Giry monad of finitely additive probability measures}
In the same way as the Giry monad of probability measures was defined, we can define a monad $(\mathcal{G}_f,\eta,\mu)$ of finitely additive probability measures on $\textbf{Mble}$. We call this monad the \textbf{Giry monad of finitely additive probability measures}.

Let $\textbf{Set}_f$ be the category of finite sets and maps and let $i:\textbf{Set}_f\to \textbf{Set}_c$ be the inclusion functor. Let $G:\textbf{Set}_c\to \textbf{Mble}$ be as in the previous subsection and define $G_f:=G\circ i$.

\begin{thm}
The Giry monad of finitely additive probability measures is the codensity monad of $G_f$.\label{finadd}
\end{thm}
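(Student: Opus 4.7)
My plan is to mirror the proof of Theorem \ref{Girymonad} almost verbatim, substituting the category $\textbf{Set}_f$ for $\textbf{Set}_c$ and using the finitely additive representation theorem (Proposition \ref{representation0}) in place of the countably additive one (Corollary \ref{repcor}). Since $\textbf{Set}_f$ is essentially small and $\textbf{Mble}$ is complete, Proposition \ref{Kan} guarantees existence of the codensity monad, and it suffices to exhibit $\mathcal{G}_f X$ as the limit of the diagram $X\downarrow G_f \xrightarrow{U} \textbf{Set}_f\xrightarrow{G_f}\textbf{Mble}$ for each measurable space $X$.

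For the cone structure, for a measurable $f:X\to G_f A$ with $A$ finite I would define $p_f:\mathcal{G}_f X\to G_f A$ by $p_f(\mathbb{P})_a = I_\mathbb{P}(f_a)$, using the integral against a finitely additive probability measure developed in Section \ref{premeasurablespaces}. Verification that $Gs\circ p_f = p_g$ for a commuting triangle $g = Gs\circ f$ reduces, exactly as in the countable case, to finite additivity of $I_\mathbb{P}$ (Proposition \ref{integral}\ref{fsum}), which now suffices because the fibres $s^{-1}(b)$ are finite.

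For the universal property, given a cone $(Y,(q_f)_f)$ and $y\in Y$, I would define $I_y:\textbf{Mble}(X,[0,1])\to [0,1]$ by $I_y(f) := q_{\widehat{f}}(y)_1$ where $\widehat{f}:X\to G_f\mathbf{2}$ sends $x$ to $(1-f(x),f(x))$. Normalization $I_y(1)=1$ follows from the same triangle with $t:\mathbf{1}\to\mathbf{2}$ as before. The key simplification is additivity: for $f,g:X\to[0,1]$ with $f+g\le 1$, I would introduce $h:X\to G_f\mathbf{3}$ defined by $h(x) := (1-f(x)-g(x),f(x),g(x))$, together with the three maps $\mathbf{3}\to\mathbf{2}$ that respectively single out the coordinate for $f$, for $g$, and for $f+g$. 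The cone equations $Gs_i\circ q_h = q_{Gs_i\circ h}$ then force $I_y(f+g) = I_y(f)+I_y(g)$. Proposition \ref{representation0} now supplies a unique finitely additive probability measure $\mathbb{P}_y$ with $I_y = I_{\mathbb{P}_y}$, giving a map $q:Y\to\mathcal{G}_f X$, $y\mapsto\mathbb{P}_y$, which is measurable because $\mathrm{ev}_A\circ q = (q_{\widehat{1_A}})_{1}$ for $A\in\Sigma_X$.

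To finish I would check $q$ is a morphism of cones by the same computation as in the countable case: for $f:X\to G_f A$ and $a\in A$, the projection $s_a:A\to\mathbf{2}$ satisfies $Gs_a\circ f = \widehat{f_a}$, so $p_f(q(y))_a = I_{\mathbb{P}_y}(f_a) = I_y(f_a) = q_f(y)_a$. Uniqueness follows by evaluating any competing morphism against the cone legs $p_{\widehat{1_A}}$ and using uniqueness in Proposition \ref{representation0}. Verifying that the resulting codensity unit and multiplication agree with the Giry structure $(\eta,\mu)$ is the same routine computation as in Theorem \ref{Girymonad}. The only genuinely substantive point, and the place one must be careful, is that we only need (and only get) finite additivity here: this is precisely why working with $\textbf{Set}_f$ and appealing to Proposition \ref{representation0} rather than Corollary \ref{repcor} produces the finitely additive rather than the countably additive Giry monad.
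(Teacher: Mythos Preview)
Your proposal is correct and follows essentially the same route as the paper: the paper remarks that the proof of Theorem~\ref{finadd} is the same as that of Theorem~\ref{Girymonad} except that one uses a finite index set in place of $\mathbb{N}$ and invokes Proposition~\ref{representation0} instead of Corollary~\ref{repcor}. Your use of $h:X\to G_f\mathbf{3}$ to establish binary additivity is exactly the finite analogue the paper has in mind.
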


The proof for Theorem \ref{finadd} is similar to the proof of Theorem \ref{Girymonad}. There are two places where the proof is slightly different.  First, instead of using a countable index set $\mathbb{N}$, it is now enough to use a finite index set. Second, for this proof we use the integral representation result Proposition \ref{representation0} instead of Corollary \ref{repcor}.

\subsection{Giry monad of probability premeasures}\label{4.3}
In this section we will discuss how the monad of probability premeasures on the category of premeasurable spaces arises as the codensity monad of a functor $G_p$. This functor $G_p$ is similar to the functor $G$ in section \ref{4.1}, however here the domain needs to be restricted to finite maps, because we are working with premeasurable maps.

Let $(X,\mathcal{B}_X)$ be a premeasurable space and let $\mathcal{G}_pX$ be the set of all probability premeasures on $X$. For $A\in \mathcal{B}_X$, let $\text{ev}_A:\mathcal{G}_p(X)\to [0,1]$ be the map that sends a probability premeasure $\mathbb{P}$ to $\mathbb{P}(A)$. The set $\mathcal{G}_pX$ becomes a premeasurable space by endowing it with the smallest algebra that makes $\text{ev}_A$ premeasurable for all $A\in \mathcal{B}_X$. We will denote this premeasurable space also by $\mathcal{G}_pX$.

Every premeasurable map $f:X\to Y$ induces a premeasurable map $\mathcal{G}_p:\mathcal{G}_pX\to \mathcal{G}_pY$ by pushing forward probability premeasures along $f$.

Let $\textbf{PreMble}$ be the category of premeasurable spaces and premeasurable maps. The above defines a functor $\mathcal{G}_f:\textbf{PreMble}\to \textbf{PreMble}$. 

We can define natural transformations $\eta_p:1_{\textbf{PreMble}}\to \mathcal{G}_p$ and $\mu_p:\mathcal{G}_p\mathcal{G}_p\to \mathcal{G}_p$ in a similar way as we defined the unit \eqref{unit} and the multiplication \eqref{multiplication} for the Giry monad of probability measures. 

Similarly as in Proposition \ref{Giry} this construction gives us a monad.
\begin{prop}
The triple $(\mathcal{G}_p,\eta_p,\mu_p)$ is a monad.\label{GiryPreMble}
\end{prop}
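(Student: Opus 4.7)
The plan is to follow the proof of Proposition \ref{Giry} line by line, with ordinary Lebesgue integration replaced throughout by the integral $I_{\mathbb{P}}$ built in Section \ref{premeasurablespaces}. Four things must be verified: that $\mu_p^X(\textbf{P})$ really is a probability premeasure on $X$; that $\eta_p$ and $\mu_p$ are premeasurable natural transformations; the two unit laws; and associativity.

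For well-definedness of $\mu_p^X(\textbf{P})$, the only non-trivial clause is countable additivity. Given a pairwise disjoint family $(A_n)$ in $\mathcal{B}_X$ with union also in $\mathcal{B}_X$, each $\mathbb{P}\in\mathcal{G}_pX$ satisfies $\mathbb{P}(\bigcup_n A_n)=\sum_n\mathbb{P}(A_n)$, which read as a pointwise identity on $\mathcal{G}_pX$ becomes $\text{ev}_{\bigcup_n A_n}=\sum_n\text{ev}_{A_n}$. Each $\text{ev}_{A_n}$ is premeasurable by the construction of $\mathcal{B}_{\mathcal{G}_pX}$, and the partial sums are bounded by $1$; so Proposition \ref{integral}\ref{series} applied with outer integrator $\textbf{P}$ yields $\int\text{ev}_{\bigcup_n A_n}\,d\textbf{P}=\sum_n\int\text{ev}_{A_n}\,d\textbf{P}$, which is exactly the countable additivity of $\mu_p^X(\textbf{P})$. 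Premeasurability of $\eta_p^X$ follows from the identity $\text{ev}_A\circ\eta_p^X=1_A$; premeasurability of $\mu_p^X$ from approximating $\text{ev}_A$ by simple functions via Lemma \ref{lemma}; and naturality of $\eta_p$ from the identity $\delta_{f(x)}=\delta_x\circ f^{-1}$.

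For naturality of $\mu_p$ and the monad axioms, I would use the uniqueness clause of Proposition \ref{representation} to reduce each equality of probability premeasures to an equality of the induced integration functionals on $\textbf{PreMble}(-,[0,1])$. The two unit laws unfold immediately: $\mu_p\circ\eta_p\mathcal{G}_p$ is the identity because $\delta_{\mathbb{P}}$ integrates $\text{ev}_A$ out to $\mathbb{P}(A)$, and $\mu_p\circ\mathcal{G}_p\eta_p$ is the identity because $\int_X\delta_x(A)\,\mathbb{P}(dx)=\mathbb{P}(A)$.

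The main obstacle is associativity, which is a Fubini-type statement for probability premeasures and is not provided by any single result of Section \ref{premeasurablespaces}. I would prove it by the standard three-step ladder: first verify the change-of-measure identity
\[\int_X f\,d\mu_p^X(\textbf{P})=\int_{\mathcal{G}_pX}\!\int_X f\,d\mathbb{P}\,\textbf{P}(d\mathbb{P})\]
for $f$ an indicator, directly from the definition of $\mu_p$; extend to simple functions by Proposition \ref{integral}\ref{fsum}; and extend to all premeasurable $[0,1]$-valued $f$ via Lemma \ref{lemma} together with Proposition \ref{integral}\ref{series} to pass the limit through the outer integral. With this identity in hand, both $\mu_p\circ\mu_p\mathcal{G}_p$ and $\mu_p\circ\mathcal{G}_p\mu_p$, evaluated at $\textbf{Q}\in\mathcal{G}_p\mathcal{G}_p\mathcal{G}_pX$ and tested against any $1_A$, unfold to the same iterated integral of $\mathbb{P}(A)$ against $\textbf{P}$ against $\textbf{Q}$, and Proposition \ref{representation} then closes the argument.
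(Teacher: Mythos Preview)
The paper does not actually prove this proposition: it merely precedes the statement with the sentence ``Similarly as in Proposition \ref{Giry} this construction gives us a monad,'' and Proposition \ref{Giry} itself is only cited to Giry without proof. Your proposal is exactly this ``similarly''---you rerun the standard Giry-monad argument with the Section \ref{premeasurablespaces} integral $I_{\mathbb{P}}$ in place of ordinary Lebesgue integration---so your approach coincides with the paper's, only with the details (well-definedness of $\mu_p$, the unit laws, and the Fubini-type step for associativity) actually spelled out rather than left implicit.
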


We call the monad in Proposition \ref{GiryPreMble} the \textbf{Giry monad of probability premeasures}.

For a countable set $A$ let $F_A$ be the set of all finite and cofinite subsets of $A$. A map $f:A\to B$ between countable sets is called a \textbf{finite map} if $f^{-1}(F_B)\subseteq F_A$. 
Note that the composition of finite maps is a finite map. Let $\textbf{Set}_c^f$ be the category of finite sets and finite maps. Every countable set $A$ can be turned into a premeasurable space, namely the set $A$ together with the algebra $F_A$. Every finite map between countable sets becomes premeasurable with respect to these algebras. We obtain a functor $j_p:\textbf{Set}_c^f\to \textbf{PreMble}$.

Now define the functor $G_p$ as $$\textbf{Set}_c^f\xrightarrow{j_p}\textbf{PreMble}\xrightarrow{\mathcal{G}_p}\textbf{PreMble}.$$

For a countable set $A$ the space $G_pA$ is the set $\{(p_a)_a\in [0,1]^A\mid \sum_{a\in A}p_a=1\}$ together with the smallest algebra that makes $\text{ev}_{A'}$ premeasurable for every finite or cofinite subset $A'$ of $A$. A map $f:X\to G_pA$ is premeasurable if and only if $\text{ev}_{A'}\circ f$ is premeasurable for every $A'\in F_A$.

For a finite map of countable sets $f:A\to B$ the premeasurable map $G_pf:G_pA\to G_pB$ is given by the assignment $$(p_a)_{a\in A}\mapsto \left(\sum_{a\in f^{-1}(b)} p_a\right)_{b\in B}$$ 

\begin{thm}
The Giry monad of probability premeasures is the codensity monad of $G$. \label{premeasmonad}
\end{thm}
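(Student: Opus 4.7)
The plan is to imitate the proof of Theorem \ref{Girymonad} almost verbatim, with three substitutions: $\textbf{Mble}$ is replaced by $\textbf{PreMble}$, the index category $\textbf{Set}_c$ by $\textbf{Set}_c^f$, and the representation result Corollary \ref{repcor} by Proposition \ref{representation}. Since $\textbf{Set}_c^f$ is essentially small and $\textbf{PreMble}$ is complete, Proposition \ref{Kan} gives the codensity monad as a pointwise limit $T^{G_p}(X)=\lim(X\downarrow G_p\xrightarrow{U}\textbf{Set}_c^f\xrightarrow{G_p}\textbf{PreMble})$. I would exhibit $\mathcal{G}_pX$ as this limit using the cone legs $p_f:=\mu_{p,j_pA}\circ\mathcal{G}_pf:\mathcal{G}_pX\to G_pA$ for each premeasurable $f:X\to G_pA$, so that $p_f(\mathbb{P})_a=\int_X f_a\,\text{d}\mathbb{P}$ for $a\in A$. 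The cone identities follow as in Theorem \ref{Girymonad} from naturality of $\mu_p$ and functoriality of $\mathcal{G}_p$.

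Given a competing cone $(Y,(q_f)_f)$ and $y\in Y$, I define $I_y:\textbf{PreMble}(X,[0,1])\to[0,1]$ by $I_y(f):=q_{\widehat{f}}(y)_1$, where $\widehat{f}:X\to G_p\textbf{2}$ sends $x$ to $(1-f(x),f(x))$; premeasurability of $\widehat{f}$ holds because $F_{\textbf{2}}=\{\emptyset,\{0\},\{1\},\textbf{2}\}$, so the hypothesis reduces to premeasurability of $f$ and $1-f$. The same triangle through $e:X\to G_p\textbf{1}$ and $t:\textbf{1}\to\textbf{2}$ used in Theorem \ref{Girymonad} gives $I_y(1)=1$. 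For the additivity clause of Proposition \ref{representation}, take $(f_n:X\to[0,1])_{n\geq 1}$ with $\sum_{n\in A}f_n$ premeasurable for every finite or cofinite $A\subseteq\mathbb{N}\setminus\{0\}$, and define $h:X\to G_p\mathbb{N}$ by $h(x)_n:=f_n(x)$ for $n\geq 1$ and $h(x)_0:=1-\sum_{n\geq 1}f_n(x)$. The hypothesis on the $f_n$ is precisely premeasurability of $h$. The projections $s_n,s:\mathbb{N}\to\textbf{2}$ of Theorem \ref{Girymonad} are morphisms of $\textbf{Set}_c^f$, since their non-trivial fibres are either singletons or cofinite subsets of $\mathbb{N}$, and the cone condition applied to the triangles through $h$ then yields $I_y(\sum_n f_n)=\sum_n I_y(f_n)$.

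Proposition \ref{representation} now produces a unique probability premeasure $\mathbb{P}_y$ with $I_y=I_{\mathbb{P}_y}$, and the assignment $q:Y\to\mathcal{G}_pX$, $y\mapsto\mathbb{P}_y$, is premeasurable because $\text{ev}_A\circ q=q_{\widehat{1_A}}(\cdot)_1$ for $A\in\mathcal{B}_X$. Verifying that $q$ is a morphism of cones and is the unique such morphism proceeds exactly as in Theorem \ref{Girymonad}, using for each $a\in A$ the singleton-picking map $s_a:A\to\textbf{2}$, which is a finite map because $\{a\}$ is finite in any countable set $A$. This gives $\mathcal{G}_pX\cong T^{G_p}X$ naturally in $X$, and the identification of units and multiplications is routine. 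The main obstacle, and the reason Proposition \ref{representation} is phrased the way it is, is premeasurability of the auxiliary map $h:X\to G_p\mathbb{N}$: since even finite sums of premeasurable $[0,1]$-valued maps need not be premeasurable (cf.\ the footnote to Proposition \ref{integral}), the choice of algebra $F_\mathbb{N}$ on $\mathbb{N}$ together with the hypothesis on finite and cofinite partial sums of $(f_n)$ is exactly what allows the cone identities to be converted into the countable additivity needed to apply Proposition \ref{representation}.
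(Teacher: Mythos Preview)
Your proposal is correct and follows exactly the approach the paper indicates: the paper's proof of Theorem \ref{premeasmonad} is just a two-sentence remark that the argument of Theorem \ref{Girymonad} goes through verbatim because all set maps used there are finite, with Proposition \ref{representation} replacing Corollary \ref{repcor}. Your write-up spells out the details the paper omits---in particular the check that the hypothesis on finite and cofinite partial sums is precisely what makes $h:X\to G_p\mathbb{N}$ premeasurable, and that $s$, $s_n$, $s_a$ all lie in $\textbf{Set}_c^f$---and these verifications are accurate.
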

Because all the maps between countable sets in the proof of Theorem \ref{Girymonad} were finite, we can use a similar argument to prove Theorem \ref{premeasmonad}. However now we need to use Proposition \ref{representation} instead of its corollary (Corollary \ref{repcor}).

\begin{rem}
Let $i:\textbf{Set}_c^f\to \textbf{Set}_c$ be the inclusion functor. We have that the codensity monad of $Gi$ is the Giry monad of probability measures, since in the proof of Theorem \ref{Girymonad} we only used finite maps.
\end{rem}
\section{Hausdorff spaces}\label{Hausdorffspaces}
In section \ref{premeasurablespaces} we presented several results about probability measures on measurable spaces. If we assume that the measurable space has more structure, we can say more about the probability measures on that space. This will be the topic of the current section. We will assume extra topological structure on the space and study the consequences for the probability measures on that space. We end the section by a generalized Daniell-Stone theorem.

\subsection{Probability measures on Hausdorff spaces}
Let $X$ be a Hausdorff space. The smallest $\sigma$-algebra that contains all the open sets of $X$ is called the \textbf{Borel $\sigma$-algebra} and is denoted by $\text{Bo}_X$. The smallest $\sigma$-algebra that makes all the continuous functions $f:X\to [0,1]$ measurable is called the \textbf{Baire $\sigma$-algbera} and is denoted by $\text{Ba}_X$. The following result, which is Theorem 7.1.1 in \cite{dudley}, states that for metric spaces these $\sigma$-algebras are the same. 

\begin{prop}
For every Hausdorff space $X$ we have that $\text{Ba}_X\subseteq \text{Bo}_X$. For a metric space $X$ we have that $\text{Ba}_X = \text{Bo}_X$.
\end{prop}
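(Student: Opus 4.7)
The plan is to prove the two inclusions separately. The first inclusion $\text{Ba}_X \subseteq \text{Bo}_X$ holds for any Hausdorff space and is essentially a minimality argument. The reverse inclusion in the metric case requires producing enough continuous functions to detect arbitrary open sets, which is where the metric structure is used.

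For the first inclusion, I would observe that any continuous map $f:X\to [0,1]$ is Borel measurable in the standard sense: for every open $U\subseteq [0,1]$, the preimage $f^{-1}(U)$ is open in $X$ and hence in $\text{Bo}_X$. Thus $\text{Bo}_X$ is a $\sigma$-algebra on $X$ with respect to which every continuous function $X\to [0,1]$ is measurable. Since $\text{Ba}_X$ is the smallest such $\sigma$-algebra, we get $\text{Ba}_X \subseteq \text{Bo}_X$.

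For the equality in the metric case, it suffices to show that $\text{Bo}_X \subseteq \text{Ba}_X$, and since $\text{Bo}_X$ is generated by the open sets, it is enough to check that every open $U\subseteq X$ lies in $\text{Ba}_X$. Given a metric $d$ on $X$, I would define
\[
f(x) := \min\bigl(d(x,X\setminus U),\,1\bigr).
\]
This $f$ is continuous (indeed $1$-Lipschitz) into $[0,1]$, hence $\text{Ba}_X$-measurable by definition of $\text{Ba}_X$. Because $X\setminus U$ is closed, $d(x,X\setminus U)=0$ iff $x\in X\setminus U$, so
\[
U \;=\; \{x\in X \mid f(x)>0\} \;=\; \bigcup_{n\geq 1} f^{-1}\!\bigl((1/n,\,1]\bigr),
\]
which is a countable union of Baire-measurable sets and therefore lies in $\text{Ba}_X$.

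The only substantive ingredient is the metric-space fact used in the second paragraph, namely that closed sets are exactly the zero sets of continuous real-valued functions. This is the crucial place where the general Hausdorff hypothesis is strengthened: without a metric (or at least perfect normality) one cannot expect every closed set to be a zero set, so the Baire $\sigma$-algebra can be strictly smaller than the Borel one. Everything else is routine manipulation of $\sigma$-algebras and continuous functions, so I do not expect any further obstacles.
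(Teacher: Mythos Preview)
Your argument is correct and entirely standard. The paper does not actually give its own proof of this proposition; it simply cites Theorem~7.1.1 in Dudley's \emph{Real Analysis and Probability}, and the argument there is essentially the one you wrote. One trivial edge case worth noting: when $U=X$ the set $X\setminus U$ is empty and $d(x,\emptyset)$ is not literally defined, but this case is immediate since $X$ lies in every $\sigma$-algebra on $X$.
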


A probability measure on $(X,\text{Bo}_X)$ is called a \textbf{Borel probability measure} and a probability measure on $(X,\text{Ba}_X)$ a \textbf{Baire probability measure}. A Borel probability measure $\mathbb{P}$ is called a \textbf{Radon probability measure} if 
$$\mathbb{P}(A)=\sup\{\mathbb{P}(K)\mid K\subseteq A \text{ and }K\text{ is compact}\}$$
for all $A$ in $\text{Bo}_X$. The next result tells us that on compact Hausdorff spaces, Baire probability measures correspond to Radon probability measures.

\begin{prop}
Every Baire probability measure on a compact Hausdorff space can be extended uniquely to a Radon probability measure. \label{Baire extension}
\end{prop}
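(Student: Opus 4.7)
The plan is to route the extension through the Riesz--Markov--Kakutani representation theorem, exploiting the fact that on a compact Hausdorff space the Baire $\sigma$-algebra is precisely the $\sigma$-algebra generated by continuous $[0,1]$-valued functions.

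First, starting from a Baire probability measure $\mathbb{P}$ on $X$, I would define a positive linear functional $I:C(X,\mathbb{R})\to \mathbb{R}$ by $I(f):=\int_X f\,\mathrm{d}\mathbb{P}$. Every continuous function is Baire-measurable by definition of $\mathrm{Ba}_X$, so the integral is well-defined, and the properties in Proposition \ref{integral} give linearity and $I(1)=1$. Applying the Riesz--Markov--Kakutani representation theorem for compact Hausdorff spaces then produces a (unique) Radon probability measure $\widetilde{\mathbb{P}}$ on $(X,\mathrm{Bo}_X)$ such that $\int_X f\,\mathrm{d}\widetilde{\mathbb{P}}=I(f)=\int_X f\,\mathrm{d}\mathbb{P}$ for every $f\in C(X,\mathbb{R})$.

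The next step is to verify that $\widetilde{\mathbb{P}}$ restricts to $\mathbb{P}$ on $\mathrm{Ba}_X$. Let $\mathbb{Q}$ denote the restriction of $\widetilde{\mathbb{P}}$ to $\mathrm{Ba}_X$. Both $\mathbb{P}$ and $\mathbb{Q}$ are Baire probability measures whose integrals against every continuous $[0,1]$-valued $f$ coincide. I would show they agree on zero sets of continuous functions: for a continuous $g:X\to[0,1]$, the sets $\{g\le 1/n\}$ are Baire and the functions $f_n:=(1-ng)\vee 0$ increase pointwise to $1_{\{g=0\}}$, so monotone convergence (Proposition \ref{integral}\ref{monotone}) forces $\mathbb{P}(\{g=0\})=\mathbb{Q}(\{g=0\})$. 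Since zero sets form a $\pi$-system generating $\mathrm{Ba}_X$, a Dynkin $\pi$-$\lambda$ argument gives $\mathbb{P}=\mathbb{Q}$ on all of $\mathrm{Ba}_X$. Thus $\widetilde{\mathbb{P}}$ is a Radon extension of $\mathbb{P}$.

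For uniqueness, suppose $\mathbb{P}_1,\mathbb{P}_2$ are two Radon extensions of $\mathbb{P}$. They agree on $\mathrm{Ba}_X$, so in particular $\int_X f\,\mathrm{d}\mathbb{P}_1=\int_X f\,\mathrm{d}\mathbb{P}_2$ for every $f\in C(X,\mathbb{R})$. The uniqueness clause of Riesz--Markov--Kakutani then gives $\mathbb{P}_1=\mathbb{P}_2$.

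The main obstacle I expect is the verification in the middle step that equality of integrals against continuous functions forces equality on all Baire sets; on a general Hausdorff space this can fail, but on a compact Hausdorff space the approximation of zero sets by continuous functions via monotone convergence rescues it. Everything else is standard manipulation of the representation theorem and of the monotone convergence properties already established in Proposition \ref{integral}.
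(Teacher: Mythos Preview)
The paper does not actually prove this proposition; it simply cites Dudley (Theorem 7.1.5). Your plan via the Riesz--Markov--Kakutani theorem is a standard and correct route, with one small slip: the functions $f_n:=(1-ng)\vee 0$ \emph{decrease} pointwise to $1_{\{g=0\}}$, not increase. This is harmless for probability measures (use dominated convergence, or apply monotone convergence to $1-f_n\uparrow 1_{\{g>0\}}$), but you should correct the wording.

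One contextual caution: within this particular paper, Example~\ref{ch} derives the Riesz--Markov representation by first producing a Baire measure via Proposition~\ref{Dini} and \emph{then} invoking Proposition~\ref{Baire extension} to upgrade it to a Radon measure. So if you were trying to make the paper self-contained, your argument would be circular. As an independent proof citing Riesz--Markov--Kakutani as an external black box, however, it is fine; indeed Dudley's own proof of Theorem 7.1.5 proceeds by a direct inner-regular extension construction rather than through Riesz--Markov, precisely to avoid this kind of dependency.
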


A proof for Proposition \ref{Baire extension} can be found in \cite{dudley} (Theorem 7.1.5).

\subsection{Representations of probability measures on Hausdorff spaces}\label{5}
For a collection $L$ of real-valued functions on a set $X$ let $\sigma(L)$ denote the smallest $\sigma$-algebra on $X$ such that every $f\in L$ is measurable. Note that this $\sigma$-algebra is generated by sets of the form $\{f>r\}:=\{x\in X\mid f(x)>r\}$ for $f\in L$ and $r\in \mathbb{R}$.

\begin{defn}Let $X$ be a set and let $L\subseteq [0,\infty)^X$ be a subset of non-negative valued functions on $X$ together with the pointwise ordering. Let $\mathbb{N}L:=\{nf\mid f\in L\}$. We call $L$ a \textbf{weak integration lattice}\footnote{We use the term \emph{weak} to indicate that this is the weaker version of the integration lattices discussed in the Appendix. For example, the subset of all $1$-Lipschitz functions on a metric space form a \emph{weak} integration lattice, but not an integration lattice.} if:
\begin{itemize}
    \item $1\in L$,
    \item for all $f,g\in L$ we have  $f\vee g$, $f\wedge g$, $f\vee g-f\wedge g \in \mathbb{N}L$,
    \item for all $f\in L$ and for all $n\in \mathbb{N}$,  $nf\wedge 1 \in \mathbb{N}L$ and
    \item for all $f\in L$ and for all $r \in [0,1]$, $rf\in L$.
\end{itemize}
\end{defn}
\begin{defn}We call a map $I:L\to [0,\infty)$ a \textbf{weak integration operator} if $I(1)=1$ and for every collection $(f_n)_{n\in\mathbb{N}}$ in $L$ such that $f:=\sum_{n\in \mathbb{N}} f_n\in L$, we have $I(f)=\sum_{n\in \mathbb{N}} I(f_n)$.
\end{defn}
The following result is the Daniell-Stone representation theorem with weaker conditions. A proof, based on \cite{kindler}, is given in the Appendix.

\begin{thm}
Let $I$ be a weak integration operator on a weak integration lattice $L$ on a set $X$. There exists a unique probability measure $\mathbb{P}$ on $(X,\sigma(L))$ such that $$I_\mathbb{P}(f)=I(f)$$ for all $f\in L$. \label{representation2}
\end{thm}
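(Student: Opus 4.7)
The plan is to adapt the classical Daniell--Stone extension to this weaker lattice hypothesis: read off a probability measure from the values of $I$ on a $\pi$-system generating $\sigma(L)$, and then recover $I = I_\mathbb{P}$ on all of $L$ by uniform approximation. First I would extend $I$ positively homogeneously to the cone $\mathbb{N}L$ by setting $\bar I(nf) := n\, I(f)$; well-definedness and monotonicity follow from the countable additivity axiom applied to the finite sum $f/n + f/n + \cdots + f/n$ (padded with zeros), each summand lying in $L$ by the $[0,1]$-scaling axiom. Second, for each $f \in L$ and $r \in [0,1)$, I would show that $1_{\{f > r\}}$ is the pointwise increasing limit of $u_n := n(f \vee r - r) \wedge 1$ and that $u_n \in \mathbb{N}L$; this requires first lifting the lattice-difference axiom from $L$ to $\mathbb{N}L$ via the rescaling trick (two elements $n_1 f_1, n_2 g_2 \in \mathbb{N}L$ become $N \tilde f_1, N \tilde g_2$ with $\tilde f_1, \tilde g_2 \in L$ by taking $N := \max(n_1, n_2)$), and then invoking the truncation axiom. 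Define $\mathbb{P}(\{f > r\}) := \lim_n \bar I(u_n)$.

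Third, I would check that these sets sit inside an algebra $\mathcal{A}$ on which $\mathbb{P}$ is finitely and then countably additive. Countable additivity on $\mathcal{A}$ transfers from the hypothesis on $I$ because the relevant disjoint-indicator sums telescope against functions in $L$ (typically summing to the constant $1 \in L$), so the additivity axiom genuinely applies. Carath\'eodory's extension theorem then yields a probability measure $\mathbb{P}$ on $\sigma(\mathcal{A}) = \sigma(L)$. Finally, to confirm $I(f) = I_\mathbb{P}(f)$ for every $f \in L$, I would approximate $f$ uniformly from below and above by simple functions built from the sets $\{f > r\}$ (the analogue of Lemma \ref{lemma} on $(X, \sigma(L))$) and pass to the limit via Proposition \ref{integral}, using the countable additivity axiom on $I$ once more to control the limit. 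Uniqueness is immediate from Dynkin's $\pi$-$\lambda$ theorem, since the generating sets form a $\pi$-system closed under finite intersection.

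The main obstacle I anticipate is the lattice bookkeeping in step two: the axioms are stated for pairs in $L$ rather than $\mathbb{N}L$, so showing $u_n \in \mathbb{N}L$ requires systematic rescaling and careful verification that the intermediate expressions really are of the form $n h$ with $h \in L$. A second and more serious obstacle is that every invocation of the countable additivity hypothesis on $I$ is conditional on the infinite sum landing inside $L$, so each additivity step must exhibit such a sum explicitly rather than relying on a generic vector-lattice structure; this is exactly why the weak integration lattice axioms are formulated as they are, and getting the algebra $\mathcal{A}$ and its countable-additivity verification to fit within their scope is the technical heart of the argument.
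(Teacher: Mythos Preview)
Your opening step --- extending $I$ to $\bar I$ on $\mathbb{N}L$ and checking that $\mathbb{N}L$ is a full (non-weak) integration lattice --- matches the paper exactly. From there the paper takes a different route that sidesteps precisely the obstacle you flag as ``the technical heart.'' Rather than defining $\mathbb{P}$ on sets $\{f>r\}\subseteq X$ and then fighting for countable additivity on an algebra, the paper (following Kindler) passes to the product $X\times[0,\infty)$ and works with the semi-ring of subgraph regions $[f,g):=\{(x,t):f(x)\le t<g(x)\}$ for $f\le g$ in $\mathbb{N}L$, setting $\mu\bigl([f,g)\bigr):=\bar I(g-f)$. The point is that a disjoint decomposition $[f,g)=\bigsqcup_n[f_n,g_n)$ in this semi-ring is \emph{equivalent} to the pointwise identity $g-f=\sum_n(g_n-f_n)$ with every summand in $\mathbb{N}L$, so countable additivity of $\mu$ is literally the integration-operator axiom for $\bar I$ --- no limits, no indicator approximations. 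Carath\'eodory then gives a measure $\rho$ on the product, one sets $\mathbb{P}(A):=\rho(A\times[0,1))$, and $I_\mathbb{P}(f)=\rho\bigl([0,f)\bigr)=I(f)$ follows by a layer-cake computation.

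Your step~4 is where the plan is incomplete. The indicators $1_{\{f>r\}}$ are only \emph{limits} of elements of $\mathbb{N}L$, not elements themselves, so a countable disjoint union in your algebra $\mathcal{A}$ does not visibly correspond to any sum $\sum f_n$ landing in $L$; the ``telescope against functions in $L$'' sentence has nothing concrete to feed into the operator axiom. A direct-on-$X$ proof can be made to work (for instance via the Daniell outer-measure construction, taking $\mathbb{P}^*(A)$ as an infimum over increasing sequences in $\mathbb{N}L$ dominating $1_A$), but that is substantially heavier than your sketch indicates. Kindler's subgraph trick is exactly the device that converts the awkward set-level additivity requirement back into the functional additivity you are actually given.
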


\begin{prop}\label{Dini}
Let $X$ be a compact Hausdorff space and let $L$ be a weak integration lattice such that every $f\in L$ is continuous and such that $f+g\in \mathbb{N}L$ for all $f,g\in L$. Let $I:L\to [0,\infty)$ be a function such that $I(f+g)=I(f)+I(g)$ for $f,g\in L$ with $f+g\in L$. Suppose also that $I(1)=1$. Then there exists a unique probability measure $\mathbb{P}$ on $(X,\sigma(L))$ such that $I_\mathbb{P}(f)=I(f)$ for $f\in L$.
\end{prop}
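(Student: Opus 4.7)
The plan is to verify that $I$ is already a weak integration operator on $L$ and then invoke Theorem \ref{representation2}. Since $I(1)=1$ by hypothesis, the only thing to check is countable additivity: given $(f_n)_{n\geq 1}\subseteq L$ with $f:=\sum_n f_n\in L$, we must show $I(f)=\sum_n I(f_n)$. Compactness of $X$ and continuity of the functions in $L$ will let us upgrade finite to countable additivity via Dini's theorem.

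First I would establish rational homogeneity and monotonicity of $I$ on $L$. For $h\in L$ and a positive integer $q$, the scaling axiom places $k h/q\in L$ for all $0\le k\le q$, so iterating finite additivity gives $I(h)=q\,I(h/q)$ and hence $I(rh)=r\,I(h)$ for every rational $r\in[0,1]$. For monotonicity, if $f\le g$ in $L$, the weak integration lattice axioms give $g-f = g\vee f - g\wedge f\in\mathbb{N}L$, so $g-f=nh$ with $h\in L$ and $n\ge 1$; dividing by $n$ produces the decomposition $g/n = f/n + h$, in which every term lies in $L$ and the sum equals $g/n\in L$. Finite additivity combined with rational homogeneity yields $I(g)=I(f)+n\,I(h)\ge I(f)$.

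Using the hypothesis $f+g\in\mathbb{N}L$, I would then extend $I$ unambiguously to an additive, monotone, rationally homogeneous function $\tilde I$ on $\mathbb{N}L$ by setting $\tilde I(mh):=m\,I(h)$ for $h\in L$; well-definedness and monotonicity of $\tilde I$ reduce, after scaling both sides into $L$, to the corresponding statements for $I$ proved above. A short induction using $f+g\in\mathbb{N}L$ shows that each partial sum $s_N:=\sum_{n=1}^N f_n$ lies in $\mathbb{N}L$ with $\tilde I(s_N)=\sum_{n=1}^N I(f_n)$, and that $s_N+\epsilon\cdot 1\in\mathbb{N}L$ with $\tilde I(s_N+\epsilon\cdot 1)=\tilde I(s_N)+\epsilon$ for every $\epsilon\in[0,1]$.

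The main step is now short. Set $r_N:=f-s_N=\sum_{n>N} f_n$. Each $r_N$ is continuous, non-negative, and decreases pointwise to $0$ on the compact Hausdorff space $X$, so Dini's theorem gives $\|r_N\|_\infty\to 0$. Fixing $\epsilon>0$ and choosing $N$ with $r_N\le\epsilon\cdot 1$, the inequalities $s_N\le f\le s_N+\epsilon\cdot 1$ hold in $\mathbb{N}L$, and monotonicity of $\tilde I$ delivers
\[
\sum_{n=1}^N I(f_n)\;\le\;I(f)\;\le\;\sum_{n=1}^N I(f_n)+\epsilon.
\]
Letting first $N\to\infty$ and then $\epsilon\to 0$ gives $I(f)=\sum_n I(f_n)$, so $I$ is a weak integration operator, and Theorem \ref{representation2} supplies the unique probability measure $\mathbb{P}$ on $(X,\sigma(L))$ with $I_\mathbb{P}(f)=I(f)$ for all $f\in L$. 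The main obstacle I foresee is the bookkeeping in the extension to $\mathbb{N}L$ (well-definedness, additivity, and monotonicity of $\tilde I$), which is precisely where the hypothesis $f+g\in\mathbb{N}L$ is used in an essential way.
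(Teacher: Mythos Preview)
Your proposal is correct and follows essentially the same route as the paper: reduce to Theorem \ref{representation2} by showing $I$ is a weak integration operator, using rational homogeneity, monotonicity (the paper's Lemma \ref{orderpres}), and Dini's theorem to pass from finite to countable additivity. The only cosmetic difference is that you extend $I$ to $\tilde I$ on $\mathbb{N}L$ up front, whereas the paper performs the equivalent scaling-down-into-$L$ arguments inline at each step.
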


The proof of Proposition \ref{Dini} of this is given in the appendix. The proof relies on Dini's theorem.

The above integral representation theorem can be used to prove several integral representation theorems of probability measures on certain topological spaces. 

\begin{eg}
Let $X$ be a metric space and let $L$ be the set of all $1$-Lipschitz functions on $X$ taking values in $[0,1]$. Then $L$ is a weak integration lattice and $\sigma(L)=\text{Ba}_X=\text{Bo}_X$. To see this, observe that for every closed set $A$, the function $f_A:=d(\cdot,A)\wedge 1:X\to [0,1]$ is $1$-Lipschitz and $A=f_A^{-1}(0)\in \sigma(L)$. Therefore $\text{Bo}_X\subseteq \sigma(L)$ and clearly $\sigma(L)\subseteq \text{Ba}_X$. 

It follows now by Theorem \ref{representation2} that every weak integration operator $I$ on $L$ induces a unique Borel probability measure on $X$ such that $I_\mathbb{P}=I$. \label{met1}
\end{eg}

\begin{eg}
Let $(X,d)$ be a metric space. A Borel probability measure $\mathbb{P}$ on $X$ is said to \textbf{have finite moment} if for all $x\in X$ we have $$\int_X d(x,y)\mathbb{P}(\text{d}y)<\infty.$$

Let $X$ be a metric space and let $L$ be the set of all $1$-Lipschitz functions on $X$ taking values in $[0,\infty)$. Then $L$ is a weak integration lattice and $\sigma(L)=\text{Ba}_X=\text{Bo}_X$. Theorem \ref{representation2} now tells us that there exists a unique Borel probability measure $\mathbb{P}$ such that $I_\mathbb{P}=I$. Since for every $x\in X$ the function $d(x,\cdot):X\to [0,\infty)$ is a $1$-Lipschitz function, we find 
$$\int_Xd(x,y)\mathbb{P}(\text{d}y) = I_\mathbb{P}(d(x,\cdot))=I(d(x,\cdot))<\infty$$
and therefore $\mathbb{P}$ has finite moment. \label{met1fm}
\end{eg}

\begin{eg}
In Example \ref{met1} and \ref{met1fm} we can also use all Lipschitz maps instead of only the $1$-Lipschitz maps.
\end{eg}

\begin{eg}[Riesz-Markov representation theorem]
Let $X$ be a compact Hausdorff space and let $L$ be the set of all continuous functions on $X$ taking values in $[0,1]$. Then $L$ is a weak integration lattice and $\sigma(L)=\text{Ba}_X$. 

Let $I:L\to [0,1]$ be a function such that $I(1)=1$ and such that $I$ preserves binary sums that exist in $L$. Then Proposition \ref{Dini} gives us a unique Baire probability measure $\mathbb{P}$ on $X$ such that $I_\mathbb{P}(f)=I(f)$ for all $f\in L$. Note that by Proposition \ref{Baire extension} the Baire probability measure $\mathbb{P}$ can be uniquely extended to a Radon probability measure. \label{ch}
\end{eg}

\begin{eg}
Let $X$ be a compact metric space and let $L$ be the set of all $1$-Lipschitz functions on $X$ taking values in $[0,1]$. Then $L$ is a weak integration lattice that satisfies the conditions of Proposition \ref{Dini}.

Therefore for a function $I:L\to [0,1]$ such that $I(f+g)=I(f)+I(g)$ for all $f,g\in L$ with $f+g$ also in $L$ and such that $I(1)=1$, there exists a unique Radon probability measure $\mathbb{P}$ on $X$ such that $I_{\mathbb{P}}=I.$ \label{compactmetric}
\end{eg}

\begin{eg}\label{eghaus}
Let $X$ be a Hausdorff space and let $L$ be the set of continuous functions $X\to[0,1]$. We see that $L$ is a weak integration lattice and $\sigma(L)=\text{Ba}_X$. Using Theorem \ref{representation2} we see that for every weak integration operator $I$ on $L$ there is a unique Baire probability measure $\mathbb{P}$ such that $I_\mathbb{P}=I$.
\end{eg}

\section{Probability monads on categories of Hausdorff spaces}\label{probabilitymonadsoncategoriesofHausdorffspaces}
In this section we will present monads of Radon probability measures and Baire probability measures and explain how they arise as codensity monads. We will introduce two new probability monads: the bounded Lipschitz monad and the Baire monad. 

We write $\textbf{Haus}$ for the category of Hausdorff spaces and continuous maps. The full subcategory of compact Hausdorff spaces is denoted by $\textbf{CH}$. The category of compact metric spaces and $1$-Lipschitz maps is denoted by $\textbf{KMet}_1$. For the category of finite sets and functions we write $\textbf{Set}_f$.

\subsection{Radon monad}\label{6.1}
We will discuss a monad of Radon probability measures on the category of compact Hausdorff spaces, known as the Radon monad. This monad was first introduced in \cite{semadeni} and \cite{swirszcz} and further discussed in \cite{jacobs} and \cite{keimel}.  We explain how this monad can be constructed as the codensity monad of a functor $R:\textbf{Set}_f\to \textbf{CH}$. Although Radon probability measures are $\sigma$\emph{-additive}, the domain of $R$ is the category of \emph{finite} sets and maps. Note that compact Hausdorff spaces arise as the algebras of the codensity monad of the inclusion $\textbf{Set}_f\to \textbf{Set}$ (i.e. the ultrafilter monad) as discussed in \cite{Kennison} and \cite{leinster}. 

Let $X$ be a topological space and let $\mathcal{R}X$ be the set of all Radon probability measures on $X$. Endow $\mathcal{R}X$ with the smallest topology such that the evaluation map $\text{ev}_f:\mathcal{R}X\to [0,1]$, that sends $\mathbb{P}$ to $\int_Xf\text{d}\mathbb{P}$, becomes continuous for every continuous function $f:X\to [0,1]$. This is the topology of weak convergence of probability measures. We will denote the obtained topological space also by $\mathcal{R}X$.

The following result follows from the Banach-Alaoglu theorem (section 3.15 in \cite{rudin}).
\begin{prop}
The topological space $\mathcal{R}X$ is a compact Hausdorff space.\label{RX}
\end{prop}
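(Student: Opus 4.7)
The plan is to realize $\mathcal{R}X$ as a closed subspace of the compact Hausdorff product $[0,1]^{C(X,[0,1])}$, where $C(X,[0,1])$ denotes the set of continuous maps $X\to [0,1]$. Consider the map
\[e:\mathcal{R}X\to [0,1]^{C(X,[0,1])},\qquad \mathbb{P}\mapsto\left(\int_X f\,\text{d}\mathbb{P}\right)_{f\in C(X,[0,1])}.\]
By the very definition of the topology on $\mathcal{R}X$ as the coarsest one making each $\text{ev}_f$ continuous, $e$ is a topological embedding: the $f$-coordinate projection of $e$ is exactly $\text{ev}_f$, so subbasic opens in $\mathcal{R}X$ are precisely $e$-preimages of subbasic opens in the product.

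For Hausdorffness I would invoke Example \ref{ch}: two Radon probability measures that agree on the integrals of every $f\in C(X,[0,1])$ determine the same functional on that lattice and hence, by uniqueness in Riesz-Markov, the same Baire probability measure; by Proposition \ref{Baire extension} this Baire measure has a unique Radon extension. Thus $e$ is injective, and $\mathcal{R}X$ inherits Hausdorffness from the product.

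For compactness, Tychonoff's theorem tells us that $[0,1]^{C(X,[0,1])}$ is compact, so it suffices to show that $e(\mathcal{R}X)$ is closed. By Example \ref{ch}, a tuple $(t_f)_{f}$ lies in the image of $e$ if and only if $t_1=1$ and $t_{f+g}=t_f+t_g$ whenever $f,g,f+g$ all lie in $C(X,[0,1])$. Each such condition is the preimage of a closed set under a continuous map built from coordinate projections, so $e(\mathcal{R}X)$ is an intersection of closed sets; being a closed subset of a compact Hausdorff space, it is itself compact Hausdorff.

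The main obstacle is this last identification of the image of $e$ as the locus cut out by the two closed conditions above; this is precisely what the Riesz-Markov theorem (Example \ref{ch}) combined with the Baire-to-Radon extension of Proposition \ref{Baire extension} buys us. The remaining steps are a direct application of Tychonoff and of the universal property of the initial topology.
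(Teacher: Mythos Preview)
Your argument is correct. The paper itself gives no proof of this proposition: it simply states that the result ``follows from the Banach--Alaoglu theorem'' and cites Rudin. What you have written is essentially an unpacking of Banach--Alaoglu in this specific context. Banach--Alaoglu is usually proved by embedding the dual unit ball into a Tychonoff cube and identifying its image as a closed set cut out by linearity and norm conditions; you do exactly this for $\mathcal{R}X$ inside $[0,1]^{C(X,[0,1])}$, with the additional virtue that you use the paper's own Riesz--Markov result (Example~\ref{ch}) together with Proposition~\ref{Baire extension} to characterise the image and to get injectivity. So your route is slightly more self-contained relative to the paper, but conceptually it is the same argument that sits behind the citation.

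One small remark: you are implicitly assuming $X$ is compact Hausdorff, which is needed for Example~\ref{ch} and Proposition~\ref{Baire extension}; this is the ambient setting of the section (the Radon monad on $\textbf{CH}$) even though the sentence introducing $\mathcal{R}X$ just says ``let $X$ be a topological space''. It would do no harm to make that hypothesis explicit at the start of your proof.
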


\begin{lem}
Let $f:X\to Y$ be a continuous function between compact Hausdorff spaces and let $\mathbb{P}$ be a Radon probability measure on $X$. Then also $\mathbb{P}\circ f^{-1}$ is a Radon probability measure. Furtermore, the assignment $\mathbb{P}\mapsto \mathbb{P}\circ f^{-1}$ defines a continuous function $\mathcal{R}f:\mathcal{R}X\to \mathcal{R}Y$.\label{Rf}
\end{lem}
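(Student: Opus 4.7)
The statement has two parts: that the pushforward $\mathbb{P}\circ f^{-1}$ is Radon, and that $\mathcal{R}f$ is continuous. I would handle them in that order.

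First, since $f$ is continuous, it is Borel measurable, so $\mathbb{P}\circ f^{-1}$ is well-defined on $\text{Bo}_Y$ and is clearly a probability measure (countable additivity transfers along preimages, which commute with disjoint unions, and $\mathbb{P}\circ f^{-1}(Y)=\mathbb{P}(X)=1$). The substantive point is inner regularity. Fix $B\in\text{Bo}_Y$ and $\varepsilon>0$. By Radonness of $\mathbb{P}$, pick a compact $K\subseteq f^{-1}(B)$ with $\mathbb{P}(f^{-1}(B)\setminus K)<\varepsilon$. Since $f$ is continuous, $f(K)$ is compact, and $f(K)\subseteq B$. Moreover $K\subseteq f^{-1}(f(K))$, so
\[
\mathbb{P}\circ f^{-1}(f(K))=\mathbb{P}(f^{-1}(f(K)))\geq \mathbb{P}(K)\geq \mathbb{P}\circ f^{-1}(B)-\varepsilon.
\]
Taking the supremum over compacts in $B$ and letting $\varepsilon\to 0$ gives the required inner regularity, so $\mathbb{P}\circ f^{-1}\in\mathcal{R}Y$.

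Next, I would use the universal property defining the topology on $\mathcal{R}Y$: a map $h:Z\to \mathcal{R}Y$ is continuous iff $\text{ev}_g\circ h$ is continuous for every continuous $g:Y\to[0,1]$. Applying this to $h=\mathcal{R}f$, I would compute, for $\mathbb{P}\in\mathcal{R}X$,
\[
(\text{ev}_g\circ \mathcal{R}f)(\mathbb{P})=\int_Y g\,\mathrm{d}(\mathbb{P}\circ f^{-1})=\int_X g\circ f\,\mathrm{d}\mathbb{P}=\text{ev}_{g\circ f}(\mathbb{P}),
\]
where the middle equality is the standard change of variables (easily verified on indicators and simple functions and then extended using Proposition \ref{integral}\ref{monotone}, or alternatively noting it is the content of equation \eqref{multiplication}-style integration). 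Since $g\circ f$ is continuous $X\to[0,1]$, the map $\text{ev}_{g\circ f}:\mathcal{R}X\to[0,1]$ is continuous by the definition of the topology on $\mathcal{R}X$. Hence $\text{ev}_g\circ\mathcal{R}f$ is continuous for every such $g$, giving continuity of $\mathcal{R}f$.

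The main obstacle is really just the inner regularity check, and even that is immediate once one notices that $f(K)$ is compact and contains enough preimage mass. The continuity of $\mathcal{R}f$ then falls out purely formally from the way the weak topology is defined, once the change-of-variables identity is in hand.
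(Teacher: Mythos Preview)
Your argument is correct and is precisely the standard elementary verification. The paper itself does not actually supply a proof of this lemma: it simply states that ``the proof of Lemma \ref{Rf} is elementary'' and moves on. Your write-up fills in exactly the details one would expect---using continuity of $f$ to push a compact approximant $K\subseteq f^{-1}(B)$ forward to the compact set $f(K)\subseteq B$, and then recognising $\text{ev}_g\circ\mathcal{R}f=\text{ev}_{g\circ f}$ to reduce continuity of $\mathcal{R}f$ to the defining subbasic maps on $\mathcal{R}X$---so there is nothing to compare.
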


The proof of Lemma \ref{Rf} is elementary.

Using Proposition \ref{RX} and Lemma \ref{Rf} we can define a functor $\mathcal{R}:\textbf{CH}\to \textbf{CH}$. 

For a compact Hausdorff $X$ we can define maps $\eta_X:X\to \mathcal{R}X$ and $\mu_X:\mathcal{R}\mathcal{R}X\to \mathcal{R}X$ in the same way as we did for the Giry monad. These maps are well-defined and continuous and they induce natural transformations $\eta:1_{\textbf{CH}}\to \mathcal{R}$ and $\mu:\mathcal{R}\mathcal{R}\to\mathcal{R}$.

As has often been observed \cite{jacobs, keimel, semadeni,swirszcz}:
\begin{prop}
The triple $(\mathcal{R},\eta,\mu)$ is a monad. \label{radon}
\end{prop}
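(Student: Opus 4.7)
The plan is to reduce each of the three monad axioms to an equality of integrals against continuous functions $f : X\to[0,1]$, and then invoke the uniqueness clause of Example \ref{ch} (Riesz--Markov, together with Proposition \ref{Baire extension} for the Baire-to-Radon extension) to conclude equality of the underlying Radon probability measures. By the very definition of the topology on $\mathcal{R}X$, every such $f$ induces a continuous function $\mathrm{ev}_f : \mathcal{R}X\to[0,1]$, $\mathbb{P}\mapsto \int_X f\,\mathrm{d}\mathbb{P}$, so the defining formula $\int_X f\,\mathrm{d}\mu_X(\mathbf{P}) = \int_{\mathcal{R}X}\mathrm{ev}_f\,\mathrm{d}\mathbf{P}$ can be unwound freely inside iterated integrals.

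For the unit axioms, given $\mathbb{P}\in \mathcal{R}X$ and continuous $f:X\to[0,1]$, I would compute
\[
\int_X f\,\mathrm{d}\mu_X(\eta_{\mathcal{R}X}(\mathbb{P})) = \int_{\mathcal{R}X}\left(\int_X f\,\mathrm{d}\mathbb{Q}\right)\delta_{\mathbb{P}}(\mathrm{d}\mathbb{Q}) = \int_X f\,\mathrm{d}\mathbb{P},
\]
and, using change of variables along $\eta_X$,
\[
\int_X f\,\mathrm{d}\mu_X(\mathcal{R}\eta_X(\mathbb{P})) = \int_X\left(\int_X f\,\mathrm{d}\delta_x\right)\mathbb{P}(\mathrm{d}x) = \int_X f\,\mathrm{d}\mathbb{P}.
\]
By Example \ref{ch} both composites equal $1_{\mathcal{R}X}(\mathbb{P}) = \mathbb{P}$, establishing $\mu_X\circ\eta_{\mathcal{R}X}=1_{\mathcal{R}X}=\mu_X\circ\mathcal{R}\eta_X$.

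The main work is associativity, $\mu_X \circ \mathcal{R}\mu_X = \mu_X \circ \mu_{\mathcal{R}X}$. For $\mathbf{P}\in \mathcal{R}\mathcal{R}\mathcal{R}X$ and continuous $f : X\to[0,1]$, I would unwind the left-hand side by change of variables along $\mu_X$ (applied to the pushforward $\mathcal{R}\mu_X(\mathbf{P}) = \mathbf{P}\circ\mu_X^{-1}$) followed by the defining formula for $\mu_X$, arriving at
\[
\int_X f\,\mathrm{d}(\mu_X\circ \mathcal{R}\mu_X)(\mathbf{P}) = \int_{\mathcal{R}\mathcal{R}X}\int_{\mathcal{R}X}\int_X f\,\mathrm{d}\mathbb{P}\,\mathbb{Q}(\mathrm{d}\mathbb{P})\,\mathbf{P}(\mathrm{d}\mathbb{Q}),
\]
and unwind the right-hand side by applying $\mu_X$ first and then $\mu_{\mathcal{R}X}$ to the continuous test function $\mathrm{ev}_f : \mathbb{P}\mapsto\int_X f\,\mathrm{d}\mathbb{P}$, obtaining the same triple integral. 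The delicate step is the change of variables for the pushforward $\mathcal{R}\mu_X(\mathbf{P})$ applied to $\mathrm{ev}_f$: I would handle it by noting that $\mathrm{ev}_f\circ\mu_X$ is itself a continuous $[0,1]$-valued function on $\mathcal{R}\mathcal{R}X$, so both sides become genuine integrals of continuous bounded functions against Radon measures, for which the pushforward identity is standard. Once both sides are recognized as the same triple integral, equality of the integrals against every continuous $f$ gives associativity via Example \ref{ch}, completing the verification of the monad axioms.
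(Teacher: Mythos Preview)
Your argument is correct and is the standard verification: test the candidate equalities of Radon measures against continuous $f:X\to[0,1]$, reduce both sides of each monad axiom to the same iterated integral via the identity $\int_X f\,\mathrm{d}\mu_X(\mathbf{P})=\int_{\mathcal{R}X}\mathrm{ev}_f\,\mathrm{d}\mathbf{P}$ and change of variables, and conclude equality of the measures by the uniqueness in Example~\ref{ch} together with Proposition~\ref{Baire extension}. The associativity computation you sketch is exactly right; both $\mu_X\circ\mathcal{R}\mu_X$ and $\mu_X\circ\mu_{\mathcal{R}X}$ evaluate on $f$ to the triple integral $\int_{\mathcal{R}\mathcal{R}X}\int_{\mathcal{R}X}\int_X f\,\mathrm{d}\mathbb{P}\,\mathbb{Q}(\mathrm{d}\mathbb{P})\,\mathbf{P}(\mathrm{d}\mathbb{Q})$.

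The paper itself does not give a proof: it introduces the proposition with ``As has often been observed \cite{jacobs, keimel, semadeni, swirszcz}'' and defers to those references. So there is nothing to compare your route against beyond noting that your direct verification is precisely the kind of argument those sources carry out.
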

The monad in Proposition \ref{radon} is called the \textbf{Radon monad}.

Every finite set $A$ endowed with the whole powerset as topology, forms a compact Hausdorff space. Every map of finite sets is continuous with respect to these topologies. This gives a functor $k:\textbf{Set}_f\to \textbf{CH}$.

We will now consider the functor $R$ which is defined as $$\textbf{Set}_f\xrightarrow{k}\textbf{CH}\xrightarrow{\mathcal{R}}\textbf{CH}$$

This means that for a finite set $A$, the space $RA$ is the subspace of $[0,1]^A$ of all families $(p_a)_a$ such that $\sum_{a\in A}p_a=1$. A map of finite sets $f:A\to B$ induces a map continuous $Rf:RA\to RB$ that sends an element $(p_a)_{a\in A}$ to $(\sum_{a\in f^{-1}(b)}p_a)_{b\in B}$. 

\begin{thm}
The Radon monad is the codensity monad of $R$. \label{radonmonad}
\end{thm}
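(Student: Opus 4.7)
The plan is to mirror the argument used for the Giry monad (Theorem \ref{Girymonad}), substituting the Daniell--Stone-type representation of Proposition \ref{Dini} (via Example \ref{ch}) for Corollary \ref{repcor}. Proposition \ref{Kan} applies since $\textbf{Set}_f$ is essentially small and $\textbf{CH}$ is complete, so the codensity monad of $R$ exists. It then suffices to show that for every compact Hausdorff space $X$,
\[
\mathcal{R}X \;\cong\; \lim\bigl(X\!\downarrow\! R\xrightarrow{U}\textbf{Set}_f\xrightarrow{R}\textbf{CH}\bigr),
\]
with the cone $p_f:\mathcal{R}X\to RA$ defined, for a continuous $f:X\to RA$, as $\mathcal{R}X\xrightarrow{\mathcal{R}f}\mathcal{R}\mathcal{R}kA\xrightarrow{\mu_{kA}}\mathcal{R}kA=RA$, i.e.\ $p_f(\mathbb{P})=(\int_X f_a\,\text{d}\mathbb{P})_{a\in A}$. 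The naturality computation showing $Rs\circ p_f=p_g$ whenever $Rs\circ f=g$ is identical to the one in Theorem \ref{Girymonad}, replacing the Giry multiplication by the Radon multiplication.

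For universality, take any cone $(Y,(q_f)_f)$ over this diagram. For each $y\in Y$ and continuous $f:X\to[0,1]$, define $\widehat{f}:X\to R\mathbf{2}$ by $x\mapsto(1-f(x),f(x))$ and set $I_y(f):=q_{\widehat{f}}(y)_1$. Applying the cone condition to the triangle with $\widehat{1}:X\to R\mathbf{2}$, the unique map $X\to R\mathbf{1}$ and $t:\mathbf{1}\to\mathbf{2}$, $0\mapsto 1$, yields $I_y(1)=1$ exactly as before. For finite additivity, suppose $f,g,f+g\in C(X,[0,1])$ and define $h:X\to R\mathbf{3}$ by $h(x)=(1-f(x)-g(x),f(x),g(x))$; then three projections $\mathbf{3}\to\mathbf{2}$ picking out $\{1\}$, $\{2\}$ and $\{1,2\}$ give commuting triangles with $\widehat{f}$, $\widehat{g}$ and $\widehat{f+g}$ respectively. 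Combining these at the level of the cone $(Y,(q_f)_f)$ yields $I_y(f+g)=I_y(f)+I_y(g)$. By Proposition \ref{Dini} (Example \ref{ch}) there is a unique Baire probability measure, and by Proposition \ref{Baire extension} a unique Radon probability measure $\mathbb{P}_y$ on $X$ with $I_y=I_{\mathbb{P}_y}$. The assignment $q:y\mapsto\mathbb{P}_y$ lands in $\mathcal{R}X$; continuity follows because for every continuous $f:X\to[0,1]$, $\operatorname{ev}_f\circ q=(q_{\widehat{f}})_1$ is continuous, which is exactly the criterion defining the topology on $\mathcal{R}X$.

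To verify that $q$ is a morphism of cones, fix $f:X\to RA$ and $a\in A$. The map $s_a:A\to\mathbf{2}$ sending $a$ to $1$ and every other element to $0$ satisfies $Rs_a\circ f=\widehat{f_a}$, hence $Rs_a\circ q_f=q_{\widehat{f_a}}$, so $q_f(y)_a=q_{\widehat{f_a}}(y)_1=I_y(f_a)=\int_X f_a\,\text{d}\mathbb{P}_y=p_f(q(y))_a$. Uniqueness of $q$ follows by taking $f=\widehat{1_U}$ for $U$ ranging over a sufficiently rich family (in fact, specializing the cone morphism equation to $f=\widehat{h}$ for continuous $h:X\to[0,1]$ forces $I_{q(y)}=I_y$, and hence the Radon measure is determined). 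Finally, checking that the unit and multiplication of the codensity monad coincide with $\eta$ and $\mu$ of the Radon monad is a direct computation from the definitions of $p_f$ and the universal property.

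The only step that differs in any essential way from Theorem \ref{Girymonad} is the representation argument: countable additivity is not available from a finite-set cone because $\mathbb{N}$ is not in the indexing category $\textbf{Set}_f$. This is precisely why Proposition \ref{Dini} is needed — Dini's theorem, applicable because $X$ is compact and the functions in $L=C(X,[0,1])$ are continuous, upgrades the finite additivity supplied by the cone to the $\sigma$-additivity required for a Radon measure. That reliance on compactness of $X$ (through Dini) is the crucial ingredient and the main obstacle one must keep in mind, and it is the reason the same strategy would not directly produce a Radon monad on a larger category of topological spaces.
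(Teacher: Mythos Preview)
Your proposal is correct and follows essentially the same route as the paper: both use Proposition~\ref{Kan}, build the cone $(p_f)_f$ via $\mu_{kA}\circ\mathcal{R}f$, extract a finitely additive functional $I_y$ from an arbitrary cone, and then invoke Example~\ref{ch} (i.e.\ Proposition~\ref{Dini} plus Proposition~\ref{Baire extension}) to obtain the Radon measure. Your closing remark on the role of Dini's theorem is a nice gloss the paper leaves implicit; the only slight wobble is the mention of $\widehat{1_U}$ in the uniqueness step (such indicators are not continuous), but you immediately supply the correct argument via $\widehat{h}$ for continuous $h$.
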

\begin{proof}
By Proposition \ref{Kan} it follows that the codensity monad of $R$ exists. 
Let $X$ be a compact Hausdorff space. We will now show that $$\mathcal{R}(X) = \lim(X\downarrow R\xrightarrow{U}\textbf{Set}_f\xrightarrow{R}\textbf{CH}).$$
Proposition \ref{Kan} then implies that $T^R(X)\cong \mathcal{R}(X)$ for all compact Hausdorff spaces $X$.

For a continuous function $f:X\to RA$, define a map $p_f:\mathcal{R}X\to RA$ by

$$\mathcal{R}X\xrightarrow{\mathcal{R}f}\mathcal{R}\mathcal{R}kA\xrightarrow{\mu_{kA}}\mathcal{R}kA=RA.$$
That means that 
$$p_f(\mathbb{P}):=\left(\int_Xf_a\text{d}\mathbb{P}\right)_{a\in A}.$$
In the same way as in the proof of Theorem \ref{Girymonad} we can show that $(\mathcal{R}X,(p_f)_f)$ is a cone over the diagram $X\downarrow R \xrightarrow{U}\textbf{Set}_f\xrightarrow{R}\textbf{CH}$. 
Let $(Y,(q_f)_f)$ be a cone over the diagram. In the same way as in the proof of Theorem \ref{Girymonad}, we can define a map $I_y:\textbf{CH}(X,[0,1])\to [0,1]$ and show that this map sends $1$ to $1$ and preserves binary sums that exist in $\textbf{CH}(X,[0,1])$. Now by Example \ref{ch}, we know that there exists a unique Radon probability measure $\mathbb{P}_y$ on $X$ such that $I_y(f)=\int_Xf\text{d}\mathbb{P}_y$ for all continuous functions $f:X\to [0,1]$. The map $q:Y\to \mathcal{R}X$ defined by the assignment $y\mapsto \mathbb{P}_y$ is continuous and is the only morphism of cones from $(Y,(q_f)_f)$ to $(\mathcal{R}X, (p_f)_f)$. This shows that $\mathcal{R}(X)\cong T^R(X)$ for all compact Hausdorff spaces $X$. Moreover, this induces a natural isomorphism $\mathcal{R}\cong T^R$. It can be checked that the unit and multiplication of the Radon monad are the same as those of the codensity monad of $R$.
\end{proof}

\begin{rem}
The domain of $R$ is the category of finite sets and maps. We would expect that this would only yield \emph{finitely additive} measures. However Theorem \ref{radonmonad} tells us that the codensity construction gives us Radon probability measures, which are \emph{$\sigma$-additive}. 
\end{rem}
\begin{rem}
Instead of using the category of all finite sets as the domain of $R$ it is enough to use the category of sets $A$ with $\lvert A\rvert \leq 3$.
\end{rem}
\subsection{Bounded Lipschitz monad}\label{blmonad}
In this section we will consider the metric version of section \ref{6.1}. We introduce a new monad of Radon probability measures on the category of compact metric spaces with $1$-Lipschitz maps. We show that this monad is the codensity monad of a functor $L$. This functor is similar to the functor $R$ from section \ref{6.1}.

Let $X$ be a compact metric space and define $\mathcal{L}X$ to be the metric space of all Radon probability measures on $X$ together with the metric $d_{\mathcal{L}X}$ defined by $$d_{\mathcal{L}X}(\mathbb{P}_1,\mathbb{P}_2):=\sup\left\{\left\lvert\int_Xf\text{d}\mathbb{P}_1-\int_Xf\text{d}\mathbb{P}_2\right\rvert\mid f:X\to [0,1] \text{ is a }1\text{-Lipschitz function}\right\}$$
for all Radon probability measures $\mathbb{P}_1$ and $\mathbb{P}_2$ on $X$. We call this metric the \textbf{bounded Lipschitz metric}\footnote{The Wasserstein distance is the metric obtained by taking the supremum over all $1$-Lipschitz function on $X$ taking values in $\mathbb{R}$ instead of $[0,1]$. The obtained metric space is called the Kantorovich space of $X$.}.

Note that this metric space is homeomorphic to $\mathcal{R}X$ as a topological space and therefore it is also compact. Using that the composition of $1$-Lipschitz functions is a $1$-Lipschitz function we can show that pushing forward along a $1$-Lipschitz function $f$ defines a $1$-Lipschitz function $\mathcal{L}f$.

Let $\textbf{KMet}_1$ be the category of compact metric spaces and $1$-Lipschitz maps. The above defines a functor $\mathcal{L}:\textbf{KMet}_1\to \textbf{KMet}_1$.

For a compact metric space $(X,d)$ let $\eta_X$ and $\mu_X$ be the maps as defined for the Radon monad. For $x$ and $y$ in $X$ we find $$d_{\mathcal{L}X}(\eta_X(x),\eta_X(y))=\sup\{\lvert f(x)-f(y)\rvert \mid f:X\to [0,1] \text{is a }1\text{-Lipschitz function}\}\leq d(x,y),$$
because $\lvert f(x)-f(y)\rvert \leq d(x,y)$ for every $1$-Lipschitz function $f:X\to [0,1]$. Therefore $\eta_X$ is $1$-Lipschitz with respect to the bounded Lipschitz metric.

For Radon probability measures $\textbf{P}_1$ and $\textbf{P}_2$ on $\mathcal{L}X$ we find for every $1$-Lipschitz function $f:X\to [0,1]$ the following inequality:
\begin{align*}
    \left\lvert \int_{\mathcal{L}X} f\text{d}\mu_X(\textbf{P}_1) -\int_{\mathcal{L}X}f\text{d}\mu_X(\textbf{P}_2)\right\rvert & = \left\lvert \int_{\mathcal{L}X}\text{ev}_f\text{d}\textbf{P}_1-\int_{\mathcal{L}X}\text{ev}_f\text{d}\textbf{P}_2\right\rvert\\
    &\leq d_{\mathcal{L}\mathcal{L}X}(\textbf{P}_1,\textbf{P}_2)
\end{align*}
In the last step we used the fact that the map $\text{ev}_f:\mathcal{L}X\to [0,1]$, which sends a Radon probability measure $\mathbb{P}$ to $\int_Xf\text{d}\mathbb{P}$, is $1$-Lipschitz. Since $f$ was arbitrary we conclude that $\mu_X$ is $1$-Lipschitz.

It follows now that we have natural transformations $1_{\textbf{KMet}_1}\to \mathcal{L}$ and $\mathcal{L}\mathcal{L}\to \mathcal{L}$ which we will also denote by $\eta$ and $\mu$.

Similar to Proposition \ref{Giry} and Proposition \ref{radon} we can prove the following result.
\begin{prop}
The triple $(\mathcal{L},\eta,\mu)$ is a monad. \label{bl}
\end{prop}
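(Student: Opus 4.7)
The plan is to piggy-back on the already-established monad structure of the Radon monad $(\mathcal{R},\eta,\mu)$ from Proposition \ref{radon}, rather than reproving associativity and the unit laws from scratch. There is a forgetful functor $U:\textbf{KMet}_1\to \textbf{CH}$ sending a compact metric space to its underlying topological space (which is automatically compact Hausdorff) and a $1$-Lipschitz map to its underlying continuous map; this functor is faithful. The key observation is that as a set, $\mathcal{L}X$ is the same as $\mathcal{R}UX$, the bounded Lipschitz metric $d_{\mathcal{L}X}$ induces the topology of weak convergence (this is classical for compact $X$), and therefore $U\mathcal{L}X=\mathcal{R}UX$ as objects of $\textbf{CH}$. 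Moreover, for a $1$-Lipschitz map $f:X\to Y$ of compact metric spaces, the map $\mathcal{L}f$ and the map $\mathcal{R}Uf$ coincide as set-theoretic functions (both are pushforward along $f$), so $U\mathcal{L}f=\mathcal{R}Uf$.

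First I would verify (or invoke the preceding paragraphs of the paper for) the three $1$-Lipschitz statements: (a) $\mathcal{L}f$ is $1$-Lipschitz whenever $f$ is, (b) $\eta_X:X\to \mathcal{L}X$ is $1$-Lipschitz, and (c) $\mu_X:\mathcal{L}\mathcal{L}X\to \mathcal{L}X$ is $1$-Lipschitz. Statements (b) and (c) are verified in the text immediately preceding the proposition statement. For (a) one checks that for a $1$-Lipschitz $f:X\to Y$ and Radon measures $\mathbb{P}_1,\mathbb{P}_2\in \mathcal{L}X$, precomposition with $f$ sends $1$-Lipschitz $[0,1]$-valued maps on $Y$ to $1$-Lipschitz $[0,1]$-valued maps on $X$, and hence
\[
d_{\mathcal{L}Y}(\mathcal{L}f(\mathbb{P}_1),\mathcal{L}f(\mathbb{P}_2))=\sup_{g}\Bigl|\int_Y g\,\mathrm{d}(\mathbb{P}_i\circ f^{-1})\Bigr|_{i=1}^{i=2}=\sup_{g}\Bigl|\int_X g\circ f\,\mathrm{d}\mathbb{P}_i\Bigr|_{i=1}^{i=2}\leq d_{\mathcal{L}X}(\mathbb{P}_1,\mathbb{P}_2),
\]
using that $g\circ f$ is $1$-Lipschitz $[0,1]$-valued on $X$. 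These three facts show that $\mathcal{L}$, $\eta$, and $\mu$ are well-defined endofunctor and natural transformations in $\textbf{KMet}_1$.

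For the monad axioms (associativity $\mu\circ \mathcal{L}\mu=\mu\circ \mu\mathcal{L}$ and the unit laws $\mu\circ \mathcal{L}\eta=\mu\circ \eta\mathcal{L}=1_{\mathcal{L}}$), I would argue by transport along $U$. Since $U$ is faithful, it suffices to check each identity after applying $U$. But $U\mathcal{L}=\mathcal{R}U$, $U\eta_X=\eta^{\mathcal{R}}_{UX}$, and $U\mu_X=\mu^{\mathcal{R}}_{UX}$ by the observations above. Hence each diagram in $\textbf{KMet}_1$ whose commutativity needs checking becomes, after applying $U$, exactly the corresponding diagram for the Radon monad on $UX$, which commutes by Proposition \ref{radon}. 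Faithfulness of $U$ then gives commutativity upstairs.

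The only step with any real content is the verification that $\mu_X$ is $1$-Lipschitz, which is already carried out in the text; everything else reduces to bookkeeping through the forgetful functor. No further integral-representation or limit arguments are required here, and no genuine obstacle remains.
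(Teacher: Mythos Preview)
Your argument is correct. The paper does not give a proof of this proposition; it only remarks that the verification is ``similar to Proposition \ref{Giry} and Proposition \ref{radon}'', i.e.\ one checks the unit and associativity laws directly via the usual integral computations, just as for the Giry and Radon monads.

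Your route is genuinely different and, in a sense, cleaner: rather than redoing those computations you transport the monad axioms along the faithful forgetful functor $U:\textbf{KMet}_1\to\textbf{CH}$, using that $U\mathcal{L}=\mathcal{R}U$ on objects and morphisms (which the paper already asserts when it says $\mathcal{L}X$ is homeomorphic to $\mathcal{R}X$) and that $U\eta$, $U\mu$ are literally the Radon unit and multiplication. Faithfulness of $U$ then pulls back the commuting diagrams from Proposition \ref{radon}. The gain is that no integral manipulation needs to be repeated; the cost is that one must justify (or cite) that the bounded Lipschitz metric metrises weak convergence on a compact metric space, which the paper takes for granted anyway. Both approaches are short; yours makes explicit why the result is essentially formal once the $1$-Lipschitz checks for $\mathcal{L}f$, $\eta_X$, and $\mu_X$ are done.
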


We call the monad from Proposition \ref{bl} the \textbf{bounded Lipschitz monad}. This monad is similar to the Kantorovich monad, which is dicussed in \cite{Fritz1, Fritz2, breugel}. 

Let $A$ be a finite set. Define a metric on $A$ by $$d_A(a_1,a_2):=\begin{cases}1 \quad \text{if }a_1\not=a_2\\
0 \quad \text{if }a_1=a_2\end{cases}$$
for all $a_1$ and $a_2$ in $A$. This makes $A$ a compact metric space. Every function of finite sets becomes $1$-Lipschitz with respect to these metrics. This gives a functor $l:\textbf{Set}_f\to \textbf{KMet}_1$.

We will now consider the functor $L$ which we define as $$\textbf{Set}_f\xrightarrow{l}\textbf{KMet}_1\xrightarrow{L}\textbf{KMet}_1.$$
Let $A$ be a finite set. The space $LA$ is the subset of $[0,1]^A$ of  families $(p_a)_a$ such that $\sum_{a\in A}p_a=1$ together with the bounded Lipschitz metric $d_{LA}$.
\begin{prop}
Let $A$ be a finite set. For $p$ and $q$ in $LA$, $$d_{LA}(p,q)=\sup\left\{\left\lvert \sum_{a\in A'}p_a-\sum_{a\in A'}q_a \right\rvert \mid  A'\subseteq A\right\}.$$
\end{prop}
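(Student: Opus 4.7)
The plan is to unwind both sides of the claimed equality and reduce the supremum over $1$-Lipschitz functions to a supremum over indicator functions via a convexity argument.

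First I would observe that, since $A$ is finite with the discrete metric $d_A$ where $d_A(a_1,a_2)=1$ whenever $a_1\neq a_2$, any function $f:A\to [0,1]$ is automatically $1$-Lipschitz: for distinct $a_1,a_2$ we have $|f(a_1)-f(a_2)|\leq 1 = d_A(a_1,a_2)$. Conversely, every $1$-Lipschitz function $A\to[0,1]$ is of course in $[0,1]^A$. Next, since every element of $LA$ corresponds to a finitely supported probability measure on $A$, the integral of $f$ against $p\in LA$ is simply $\sum_{a\in A} f(a)p_a$. Therefore the definition of the bounded Lipschitz metric reduces to
\[
d_{LA}(p,q) = \sup_{f:A\to [0,1]} \left\lvert \sum_{a\in A} f(a)(p_a-q_a)\right\rvert.
\]

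For the inequality $\geq$, I would plug in $f=1_{A'}$ for an arbitrary subset $A'\subseteq A$; this is a valid $[0,1]$-valued function and gives $\sum_{a\in A} 1_{A'}(a)(p_a-q_a)=\sum_{a\in A'}p_a-\sum_{a\in A'}q_a$, so taking the supremum over $A'\subseteq A$ shows the right-hand side is bounded above by $d_{LA}(p,q)$.

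For the inequality $\leq$, set $c_a:=p_a-q_a$ and note that $\sum_{a\in A}c_a=0$. The map $f\mapsto \sum_{a\in A} f(a)c_a$ is linear (hence affine and continuous) on the compact convex polytope $[0,1]^A$, so by the Bauer maximum principle its maximum and minimum are both attained at an extreme point of $[0,1]^A$, i.e. at some $f\in\{0,1\}^A$, which is the indicator $1_{A'}$ of some subset $A'\subseteq A$. Since $|\cdot|$ is the maximum of this linear functional and its negative, the supremum of $|\sum_a f(a)c_a|$ over $f\in [0,1]^A$ is attained at an indicator function, yielding an $A'\subseteq A$ with $d_{LA}(p,q)=|\sum_{a\in A'}p_a-\sum_{a\in A'}q_a|$. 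This is the required inequality, and there is no real obstacle here beyond checking the extreme points of the hypercube; the argument is essentially an LP-style reduction to the vertices of $[0,1]^A$.
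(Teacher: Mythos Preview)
Your proof is correct, but it takes a different route from the paper's. For the inequality $\leq$, you invoke the Bauer maximum principle: the linear functional $f\mapsto\sum_a f(a)(p_a-q_a)$ on the cube $[0,1]^A$ attains its extrema at vertices, hence at indicator functions, and the absolute value is handled by taking the maximum of the functional and its negative. The paper instead constructs the optimizing subset explicitly: setting $A^+:=\{a\in A\mid p_a\geq q_a\}$ and $A^-:=\{a\in A\mid p_a\leq q_a\}$, it shows directly that for any $f:A\to[0,1]$ one has $\sum_a f(a)(p_a-q_a)\leq \sum_{a\in A^+}(p_a-q_a)$ (drop the negative terms, then bound $f(a)\leq 1$ on the positive terms), and similarly with $A^-$ for the reverse sign. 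Your argument is more conceptual and would generalize cleanly to other convex constraint sets on $f$; the paper's argument is more elementary and has the small advantage of identifying the optimal $A'$ explicitly rather than merely asserting its existence. Incidentally, the observation $\sum_a c_a=0$ in your argument is not actually used anywhere and could be omitted.
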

\begin{proof}
We view $A$ as a metric space by endowing it with the metric $d_A$.

For a subset $A'$ of $A$, the function $1_{A'}:A\to [0,1]$ is a $1$-Lipschitz function. We find $$\left\lvert \sum_{a\in A'}p_a-\sum_{a\in A'}q_a\right\rvert = \left \lvert \int_A1_{A'}\text{d}p-\int_A1_{A'}\text{d}q\right\rvert \leq d_{LA}(p,q).$$
This shows one inequality. 

Define the sets $A^+=\{a\in A\mid p_a\geq q_a\}$ and $A^-=\{a\in A\mid p_a \leq q_a\}$. For a $1$-Lipschitz function $f:A\to [0,1]$, we find 
\begin{align*}
    \int_Af\text{d}p-\int_Af\text{d}q & =\sum_{a\in A}f(a)(p_a-q_a)\\
    & \leq \sum_{a\in A^+}f(a)(p_a-q_a)\\
    & \leq \sum_{a\in A^+}p_a-q_a\\
    & = \left\lvert \sum_{a\in A^+}p_a-\sum_{a\in A^+}q_a\right\rvert
\end{align*}
Similarly we find $$-\left(\int_Af\text{d}p-\int_Af\text{d}q\right) \leq \left\lvert \sum_{a\in A^-}p_a-\sum_{a\in A^-}q_a\right\rvert.$$
We can conclude that $$\left\lvert \int_A f \text{d}p-\int_Af\text{d}q\right\rvert \leq \sup\left\{\left\lvert \sum_{a\in A'}p_a-\sum_{a\in A'}q_a\right\rvert \mid A'\subseteq A\right\},$$
which proves the other inequality.
\end{proof}
This gives us the following useful corollary. 
\begin{cor}
Let $X$ be a compact metric space and let $A$ be a finite set. A map $f:X\to LA$ is $1$-Lipschitz if and only if $$x\mapsto \sum_{a\in A'}f(x)_a$$ defines a $1$-Lipschitz map $X\to [0,1]$ for every $A'\subseteq A$. \label{lips}
\end{cor}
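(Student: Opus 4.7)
The plan is to derive this directly from the preceding proposition, which characterizes the bounded Lipschitz metric on $LA$ as the supremum of $\lvert \sum_{a\in A'} p_a - \sum_{a\in A'} q_a\rvert$ over subsets $A' \subseteq A$. Both directions of the biconditional then reduce to swapping a supremum past an inequality.

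First I would handle the forward direction. Assume $f:X\to LA$ is $1$-Lipschitz, fix $A'\subseteq A$, and set $g_{A'}(x):=\sum_{a\in A'}f(x)_a$. For any $x_1,x_2\in X$, the preceding proposition gives
\[
\lvert g_{A'}(x_1)-g_{A'}(x_2)\rvert = \left\lvert \sum_{a\in A'} f(x_1)_a - \sum_{a\in A'} f(x_2)_a\right\rvert \le d_{LA}(f(x_1),f(x_2)) \le d(x_1,x_2),
\]
so $g_{A'}$ is $1$-Lipschitz as a map $X\to[0,1]$.

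For the reverse direction, assume that each $g_{A'}:x\mapsto \sum_{a\in A'}f(x)_a$ is $1$-Lipschitz. Then for any $x_1,x_2\in X$, applying the preceding proposition to $p=f(x_1)$ and $q=f(x_2)$ yields
\[
d_{LA}(f(x_1),f(x_2)) = \sup_{A'\subseteq A}\lvert g_{A'}(x_1)-g_{A'}(x_2)\rvert \le d(x_1,x_2),
\]
since the supremum is over a finite family of $1$-Lipschitz functions. This shows $f$ is $1$-Lipschitz.

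There is no real obstacle here; the content of the statement is entirely carried by the preceding proposition. The only point worth noting is that finiteness of $A$ makes the supremum defining $d_{LA}$ a maximum over a finite collection $2^A$, so no further approximation arguments are needed.
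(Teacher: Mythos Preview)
Your proof is correct and follows exactly the route the paper intends: the corollary is stated without proof as an immediate consequence of the preceding proposition, and your argument makes that deduction explicit in both directions.
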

This leads to the main result of this section.
\begin{thm}
The bounded Lipschitz monad is the codensity monad of $L$. \label{blip}
\end{thm}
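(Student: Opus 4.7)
The plan is to follow the template of Theorem \ref{Girymonad} and Theorem \ref{radonmonad}, substituting Example \ref{compactmetric} for the integral representation step. First I would invoke Proposition \ref{Kan} (noting that $\textbf{KMet}_1$ is complete and $\textbf{Set}_f$ is essentially small) and show that for each compact metric space $X$,
\[
\mathcal{L}(X)=\lim\bigl(X\downarrow L\xrightarrow{U}\textbf{Set}_f\xrightarrow{L}\textbf{KMet}_1\bigr).
\]
The cone over this diagram is built as before: for a $1$-Lipschitz map $f:X\to LA$ define $p_f:=\mu_{lA}\circ \mathcal{L}f$, so $p_f(\mathbb{P})=\bigl(\int_X f_a\,\text{d}\mathbb{P}\bigr)_{a\in A}$; the cone condition follows from naturality of $\mu$ exactly as in the Giry case.

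For the universal property, given a cone $(Y,(q_f)_f)$ and $y\in Y$, I would define a functional on $1$-Lipschitz $[0,1]$-valued functions. For $1$-Lipschitz $f:X\to[0,1]$ let $\widehat{f}:X\to L\textbf{2}$ be $x\mapsto(1-f(x),f(x))$; Corollary \ref{lips} shows this is $1$-Lipschitz, since the nontrivial subset-sums produce the $1$-Lipschitz maps $f$ and $1-f$. Set $I_y(f):=q_{\widehat{f}}(y)_1$. To apply Example \ref{compactmetric} I must check $I_y(1)=1$ and binary-sum preservation. The first follows by comparing with $L\textbf{1}$ via the terminal map, precisely as in the Giry proof. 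For the second, given $1$-Lipschitz $f,g:X\to[0,1]$ with $f+g$ also $1$-Lipschitz and $[0,1]$-valued, define $h:X\to L\textbf{3}$ by $h(x):=(1-f(x)-g(x),f(x),g(x))$; Corollary \ref{lips} guarantees this is $1$-Lipschitz since the eight subset-sums are $0,\,1-f-g,\,f,\,g,\,1-g,\,1-f,\,f+g,\,1$, each $1$-Lipschitz. Composing $h$ with the maps $s_{\{1\}},s_{\{2\}},s_{\{1,2\}}:\textbf{3}\to\textbf{2}$ recovers $\widehat{f},\widehat{g},\widehat{f+g}$, and the cone condition on $(Y,(q_f)_f)$ then yields $I_y(f+g)=q_h(y)_1+q_h(y)_2=I_y(f)+I_y(g)$.

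Example \ref{compactmetric} now produces a unique Radon probability measure $\mathbb{P}_y$ with $I_{\mathbb{P}_y}=I_y$. Defining $q:Y\to\mathcal{L}X$ by $y\mapsto\mathbb{P}_y$, the step I expect to require the most care—genuinely new compared to the Giry and Radon proofs—is verifying that $q$ is itself $1$-Lipschitz with respect to the bounded Lipschitz metric. This will follow from the estimate
\[
\Bigl|\int_X f\,\text{d}\mathbb{P}_{y_1}-\int_X f\,\text{d}\mathbb{P}_{y_2}\Bigr|=\bigl|q_{\widehat{f}}(y_1)_1-q_{\widehat{f}}(y_2)_1\bigr|\leq d_{L\textbf{2}}\bigl(q_{\widehat{f}}(y_1),q_{\widehat{f}}(y_2)\bigr)\leq d_Y(y_1,y_2),
\]
where the middle inequality uses the formula for $d_{LA}$ from the proposition preceding Corollary \ref{lips} (applied to $A'=\{1\}\subseteq\textbf{2}$) and the last uses that $q_{\widehat{f}}$ is a $1$-Lipschitz leg of a cone in $\textbf{KMet}_1$. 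Taking the supremum over $1$-Lipschitz $f:X\to[0,1]$ gives $d_{\mathcal{L}X}(q(y_1),q(y_2))\leq d_Y(y_1,y_2)$.

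That $q$ is a morphism of cones and is unique then follows verbatim from the corresponding argument in Theorem \ref{Girymonad} — all maps $s_a:A\to\textbf{2}$ between finite discrete metric spaces are automatically $1$-Lipschitz, so the same diagram-chase applies — and compatibility of the codensity unit and multiplication with $\eta$ and $\mu$ is routine.
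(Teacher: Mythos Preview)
Your proposal is correct and follows essentially the same route as the paper's proof: the same cone $(p_f)_f$ built from $\mu\circ\mathcal{L}f$, the same functional $I_y(f)=q_{\widehat{f}}(y)_1$, the same $h:X\to L\textbf{3}$ checked via Corollary \ref{lips} for binary additivity, the same appeal to Example \ref{compactmetric}, and the same estimate for the $1$-Lipschitz property of $q$. Your treatment is, if anything, slightly more careful in spelling out the eight subset-sums and in inserting the intermediate $d_{L\textbf{2}}$ step.
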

\begin{proof}
We will again use Proposition \ref{Kan}. Let $(X,d)$ be a compact metric space and let $D_X$ denote the diagram $$X\downarrow L\xrightarrow{U}\textbf{Set}_f\xrightarrow{L}\textbf{KMet}_1.$$
For a $1$-Lipschitz function $f:X\to LA$ define the map $p_f:\mathcal{L}X\to LA$ by 
$$\mathcal{L}X\xrightarrow{\mathcal{L}f}\mathcal{L}\mathcal{L}lA\xrightarrow{\mu_{lA}}\mathcal{L}lA=LA$$
This means that for every $\mathbb{P}\in \mathcal{L}X$,
$$p_f(\mathbb{P})=\left(\int_Xf_a\text{d}\mathbb{P}\right)_{a\in A}.$$

In the same way as in the proof of Theorem \ref{Girymonad}, it can be shown that $(\mathcal{L}X,(p_f)_f)$ forms a cone over the diagram $D_X$.

We will now show that this is the limiting cone over the diagram.

Let $(Y,(q_f)_f)$ be a cone over the diagram $D_X$. For a $1$-Lipschitz function $f:X\to[0,1]$ let $\widehat{f}:X\to L\textbf{2}$ be the function defined by $\widehat{f}(x):=(1-f(x),f(x))$. Note that this function is $1$-Lipschitz. 

For an element $y\in Y$ define the map $I_y:\textbf{KMet}_1(X,[0,1])\to [0,1]$ by $$I_y(f):=q_{\widehat{f}}(y)_1.$$
For $n\geq 1$, let $\textbf{n}:=\{0,\ldots,n-1\}$. Let $t:\textbf{1}\to \textbf{2}$ be the singleton map that sends $0$ to $1$ and let $e$ be the unique $1$-Lipschitz map $X\to L\textbf{1}$. We have that $Lt\circ e = \widehat{1}$, i.e. $t$ is a morphism $\widehat{1}\to e$ in the arrow category $X\downarrow L$. Since $(Y,(q_f)_f)$ is a cone over the diagram $D_X$ it follows that $Lt\circ q_e=q_{\widehat{1}}$. This implies that $I_y(1)=1$.

Consider $1$-Lipschitz functions $f_1,f_2:X\to [0,1]$ such that also $f:=f_1+f_2$ is a $1$-Lipschitz function that takes values in $[0,1]$. Define the map $h:X\to L\textbf{2}$ that sends an element $x$ to $(1-f(x),f_1(x), f_2(x))$. Because $1, f_1,f_2,1-f_2,1-f_1,1-f$ and $f$ are all $1$-Lipschitz, so is $h$ by Corollary \ref{lips}. Let $s:\textbf{3}\to \textbf{2}$ be the singleton map that sends $0$ to $0$ and the other elements to $1$. For an element $k\in \textbf{3}$, let $s_k:\textbf{3}\to \textbf{2}$ be the singleton map that sends $k$ to $1$ and the other elements to $0$.

We have that $$Ls\circ h = \widehat{f}$$ and for every $k\in \textbf{3}$ that 
$$Ls_k\circ h = \widehat{f_k}.$$
Because $(Y,(q_f)_f)$ is a cone over the diagram $D_X$, these equalities imply that $$Ls\circ q_h = q_{\widehat{f}}$$ and for every $k\in \textbf{3}$ that 
$$Ls_k\circ q_h = q_{\widehat{f_k}}.$$
Using this we obtain the following equalities:
$$I_y(f)=(Ls\circ q_{h}(y))_1=q_h(y)_1+q_h(y)_2=Ls_1\circ q_h(y))_1+Ls_2\circ q_h(y))_2+=I_y(f_1)+I_y(f_2).$$

By Example \ref{compactmetric} there exists a unique Radon probability measure $\mathbb{P}_y$ such that $I_y(f)=I_{\mathbb{P}_y}(f)$. The assignment $y\mapsto \mathbb{P}_y$ defines a map $q:Y\to \mathcal{L}X$. For $y_1$ and $y_2$ in $Y$  and for a $1$-Lipschitz function $f:X\to [0,1]$, we find that $$\left\lvert \int_Xf\text{d}\mathbb{P}_{y_1}-\int_Xf\text{d}\mathbb{P}_{y_2}\right\rvert = \lvert I_{y_1}(f)-I_{y_2}(f)\rvert = \lvert q_{\widehat{f}}(y_1)_1-q_{\widehat{f}}(y_2)_1\lvert \leq d_Y(y_1,y_2).$$
It follows now that $q$ is $1$-Lipschitz. 

Let $f:X\to LA$ be a $1$-Lipschitz map and let $a$ be an element of $A$. Let $s_a:A\to \textbf{2}$ be the singleton map that sends $a$ to $1$ and every other element to $0$. Since we have that $Ls_a\circ f = \widehat{f_a}$ we also have that $Ls_a\circ q_f=q_{\widehat{f_a}}$. In particular we find for every $y\in Y$ that $$I_y(f_a)=q_{\widehat{f_a}}(y)_1=(Ls_a\circ q_f(y))_1=q_f(y)_a.$$
Using this we obtain for every $y\in Y$ and for every $a\in A$ that $$p_f\circ q(y)_a=I_{\mathbb{P}_y}(f_a)=I_y(f)_a=q_f(y)_a.$$
This shows that $q$ is a morphism of cones from $(\mathcal{L}X,(p_f)_f)$ to $(Y,(q_f)_f)$. 

Let $\Tilde{q}:Y\to \mathcal{L}X$ be another morphism of cones. Then for every 
$1$-Lipschitz function $f:X\to [0,1]$ we have that
$$\int_Xf\text{d}\Tilde{q}(y)=(p_{\widehat{f}}\circ \Tilde{q}(y))_1=q_{\widehat{f}}(y)_1=I_y(f).$$
This show that $\Tilde{q}(y)=q(y)$ for all $y\in Y$ and therefore $(\mathcal{L}X,(p_f)_f)$ is the limiting cone over the diagram $D_X$. This implies that $\mathcal{L}(X)\cong T^L(X)$ for all compact metric spaces. This induces a natural isomorphism $\mathcal{L}\cong T^L$. 

It can be checked that the unit and multiplication of the codensity monad of $L$ are equal to the unit and multiplication of the bounded Lipschitz monad. 
\end{proof}

\begin{rem}
Note that the bounded Lipschitz monad can be extended to a monad on the category $\textbf{KMet}$ of compact metric spaces and all Lipschitz maps. The compact metric space $\mathcal{L}X$ is isomorphic to the Kantorovich space in this category.
This monad can also be obtained as a codensity monad in a similar way. The proof of this is a simple adaptation of the proof Theorem \ref{blip}.
\end{rem}

\begin{rem}
The Kantorovich monad on compact metric spaces introduced by van Breugel in \cite{breugel} was extended to a monad on complete metric spaces by Fritz and Perrone \cite{Fritz1,Fritz2}. The underlying endofunctor of this monad sends a complete metric space $(X,d)$ to the Kantorovich space of $(X,d)$. This is the space of all Radon probability measure of finite moment together with the Wasserstein distance. 

We would like to use the integral representation theorem from Example \ref{met1fm} to construct this monad as a codensity monad, but we have been unable to find a way to do so.
\end{rem}

\subsection{Baire monad}\label{bairemonad}
In this section we will introduce a new monad of probability measures, which we will call the \emph{Baire monad} and we explain how this monad arises as a codensity monad. We write $\textbf{Haus}$ to mean the category of Hausdorff spaces and continuous maps. 

For a Hausdorff space $X$ let $\mathcal{B}X$ be the space of Baire probability measures on $X$ with the smallest topology such that $\text{ev}_f:\mathcal{B}X\to [0,1]$ is continuous for every continuous function $f:X\to [0,1]$. Much as in the previous sections this induces a functor $\mathcal{B}:\textbf{Haus}\to \textbf{Haus}$ where $\textbf{Haus}$ is the category of Hausdorff spaces and continuous maps.  We can define a monad structure on this endofunctor and we call this monad the \textbf{Baire monad}.

For a countable set $A$ let $BA$ be the subspace of $[0,1]^A$ of families $(p_a)_{a\in A}$ such that $\sum_{a\in A}p_a=1$. Every finite map\footnote{Finite maps were introduced in subsection \ref{4.3}} $f:A\to C$ of countable sets induces a continuous map $Bf:BA\to BC$. Let $\textbf{Set}_c^f$ be the category of countable sets and finite maps. We obtain a functor $B:\textbf{Set}_c^f\to \textbf{Haus}$.

\begin{thm}
The Baire monad is the codensity monad of $B$.\label{baire}
\end{thm}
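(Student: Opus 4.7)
The plan is to run the proof of Theorem \ref{Girymonad} in the topological setting: measurable spaces become Hausdorff spaces, measurable $[0,1]$-valued functions become continuous ones, and Corollary \ref{repcor} is replaced by the integral representation of Example \ref{eghaus}. Because $\textbf{Set}_c^f$ is essentially small and $\textbf{Haus}$ is complete, Proposition \ref{Kan} applies, and it is enough to identify $\mathcal{B}X$ with $\lim(X\downarrow B\xrightarrow{U}\textbf{Set}_c^f\xrightarrow{B}\textbf{Haus})$ naturally in $X$.

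For continuous $f:X\to BA$, define the cone leg $p_f:\mathcal{B}X\to BA$ by $p_f(\mathbb{P})_a:=\int_X f_a\,\mathrm{d}\mathbb{P}$, where $f_a:=\text{ev}_a\circ f$. This map lands in $BA$ because monotone convergence yields $\sum_a\int_X f_a\,\mathrm{d}\mathbb{P}=\int_X 1\,\mathrm{d}\mathbb{P}=1$, and each coordinate of $p_f$ is continuous on $\mathcal{B}X$ by the very definition of the topology on $\mathcal{B}X$, so $p_f$ is continuous into the subspace topology on $BA$. The cone condition $p_{Bs\circ f}=Bs\circ p_f$ for a finite map $s:A\to C$ reduces to additivity of the integral on the preimage $s^{-1}(c)$, handled in the finite case by linearity and in the cofinite case by writing the sum as $1-\sum_{\text{finite}}$ and using $\sum_a p_f(\mathbb{P})_a=1$.

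The main work is the universal property. Given a cone $(Y,(q_f)_f)$ and $y\in Y$, define $I_y:\textbf{Haus}(X,[0,1])\to[0,1]$ by $I_y(g):=q_{\widehat g}(y)_1$, where $\widehat g:X\to B\textbf{2}$ sends $x$ to $(1-g(x),g(x))$. Applying the cone condition to the finite map $t:\textbf{1}\to\textbf{2}$, $0\mapsto 1$ gives $I_y(1)=1$ exactly as in Theorem \ref{Girymonad}. For countable additivity, given $(g_n)_{n\geq 1}$ in $\textbf{Haus}(X,[0,1])$ whose sum $g$ is again continuous into $[0,1]$, set $h:X\to B\mathbb{N}$ by $h(x)_0:=1-g(x)$ and $h(x)_n:=g_n(x)$ for $n\geq 1$. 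This $h$ is continuous because each coordinate is continuous and $B\mathbb{N}$ carries the subspace topology from $[0,1]^{\mathbb{N}}$. The singleton maps $s,s_n:\mathbb{N}\to\textbf{2}$ from the proof of Theorem \ref{Girymonad} are morphisms of $\textbf{Set}_c^f$ because the preimage of each singleton is either a singleton or its complement, and the identities $Bs\circ h=\widehat g$ and $Bs_n\circ h=\widehat{g_n}$ transported through the cone yield $I_y(g)=\sum_{n\geq 1}I_y(g_n)$. Thus $I_y$ is a weak integration operator on the weak integration lattice of Example \ref{eghaus}, producing a unique Baire probability measure $\mathbb{P}_y$ with $I_y=I_{\mathbb{P}_y}$.

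Setting $q(y):=\mathbb{P}_y$, the map $q:Y\to\mathcal{B}X$ is continuous because $\text{ev}_g\circ q=(q_{\widehat g})_1$ is a composite of continuous maps, and the same singleton-map argument (now applied to $s_a:A\to\textbf{2}$ picking out $a\in A$) shows $q$ is a morphism of cones; uniqueness follows from the uniqueness clause of Example \ref{eghaus}. Compatibility of the codensity unit and multiplication with those of the Baire monad is routine. The main subtle point compared with the Giry case is the construction and cone property of $p_f$, which require a monotone-convergence argument for well-definedness and a separate treatment of cofinite preimages; once these are in hand, the rest of the proof is formally parallel to that of Theorem \ref{Girymonad}.
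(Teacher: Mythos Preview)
Your proposal is correct and follows essentially the same route as the paper's own sketch: both apply Proposition~\ref{Kan}, build the cone $(\mathcal{B}X,(p_f)_f)$ by integration, extract from an arbitrary cone the operator $I_y$ via $\widehat g:X\to B\mathbf{2}$, use the finite maps $s,s_n:\mathbb{N}\to\mathbf{2}$ to obtain countable additivity, and invoke Example~\ref{eghaus} in place of Corollary~\ref{repcor}. Your treatment is slightly more explicit than the paper's sketch in two places---the monotone-convergence check that $p_f$ lands in $BA$ and the separate handling of cofinite preimages for the cone condition---but these are exactly the details the paper's phrase ``because we use finite maps and the finite sum of continuous functions is continuous'' is gesturing at.
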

The proof of this result is similar to the proof of Theorem \ref{premeasmonad} and Theorem \ref{Girymonad}. Therefore we will only give a short sketch of the proof. We will again use Proposition \ref{Kan} and show that $\mathcal{B}X$ is the limit of the diagram $$X\downarrow B \rightarrow \textbf{Set}_c^f\xrightarrow{B}\textbf{Haus}.$$
Every continuous map $f:X\to BA$ induces a map $p_f:\mathcal{B}X\to BA$ by sending $\mathbb{P}$ to $\int_Xf\text{d}\mathbb{P}$. Because we use finite maps and the finite sum of continuous functions is continuous, these maps form a cone over the diagram.

To show that this cone is universal we can use a similar argument as in the proof of Theorem \ref{Girymonad}, since all the maps introduced here were finite maps. Only now we use the representation theorem from Example \ref{eghaus} instead of Proposition \ref{representation}.
\begin{rem}
The results in this section would also work for arbitrary topological spaces. Because it is not common in probability theory to study probability measures on non-Hausdorff spaces we chose to restrict everything to Hausdorff spaces. 
\end{rem}
\begin{rem}
The construction of the Baire monad as a codensity monad is slightly different from the other probability monads. Here the functor is not of the form $$\textbf{Set}_c^f\xrightarrow{i}\textbf{Haus}\xrightarrow{B}\textbf{Haus}.$$
Therefore it does not completely follow the pattern of the constructions in the previous monads. We tried to construct this monad as the codensity monad of a functor of the of form $\mathcal{B}i$ for some (inclusion) functor $i:\textbf{Set}_c^f\to \textbf{Haus}$, but we have not been able to do this so far. 
\end{rem}

\newpage
\begin{appendices}
\section{Integral representation theorems}
\subsection{Carath\'eodory extension theorem}
Let $X$ be a set. \begin{defn}
 A family $S$ of subsets of $X$ is called a \textbf{semi-ring} if it contains $\emptyset$ and is closed under finite intersections and such that the relative complement of a set in $S$ can be written as a finite union of sets in $S$. 
\end{defn} Let $\sigma(S)$ denote the smallest $\sigma$-algebra on $X$ that contains $S$.
\begin{thm}[Carath\'eodory]
Let $\mu:S\to [0,\infty)$ be a map such that $\mathbb{P}(\emptyset)=0$. Suppose that for every collections $(A_n)_{n\in \mathbb{N}}$ of pairwise disjoint elements in $S$ such that their union $A$ is also in $S$ we have $\mu\left(A\right)=\sum_{n\in \mathbb{N}} \mu(A_n)$. Then $\mu$ extends uniquely to a $\sigma$-finite measure $\rho$ on $(X,\sigma(S))$. \label{cara}
\end{thm}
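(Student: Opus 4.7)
The plan is to follow the classical Carathéodory outer measure construction, adapted to the semi-ring setting (rather than the more familiar algebra setting).

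First I would build an outer measure $\mu^*$ on the full power set of $X$ by setting
\[
\mu^*(A) := \inf\Bigl\{\sum_{n=1}^{\infty}\mu(S_n) \;:\; S_n\in S,\ A\subseteq \bigcup_{n=1}^{\infty}S_n\Bigr\},
\]
with the convention $\mu^*(A)=\infty$ if no such covering exists (and $\mu^*(\emptyset)=0$). Standard routine arguments then give monotonicity, countable subadditivity and $\mu^*(\emptyset)=0$, so $\mu^*$ is an outer measure. I would then invoke Carathéodory's general theorem: the class $\mathcal{M}$ of sets $A$ satisfying $\mu^*(E)=\mu^*(E\cap A)+\mu^*(E\setminus A)$ for all $E\subseteq X$ is a $\sigma$-algebra on which $\mu^*$ restricts to a complete measure.

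The core content is then verifying two things: (a) $S\subseteq \mathcal{M}$, so that $\sigma(S)\subseteq \mathcal{M}$, and (b) $\mu^*$ agrees with $\mu$ on $S$. For (b), the bound $\mu^*(A)\leq \mu(A)$ for $A\in S$ is trivial by using the cover $\{A,\emptyset,\emptyset,\dots\}$. The reverse inequality is where the countable additivity hypothesis on $\mu$ is actually used: given any cover $A\subseteq \bigcup_n S_n$ with $S_n\in S$, I would refine it into a pairwise disjoint cover using the semi-ring property (the fact that relative complements in $S$ are finite disjoint unions of elements of $S$), intersect with $A$ to write $A$ as a disjoint countable union of elements of $S$, and then apply countable additivity of $\mu$. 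For (a), I would take $A\in S$ and an arbitrary test set $E$ with $\mu^*(E)<\infty$, take an $\varepsilon$-optimal cover $E\subseteq \bigcup_n S_n$ by elements of $S$, split each $S_n = (S_n\cap A)\sqcup (S_n\setminus A)$ where $S_n\cap A\in S$ and $S_n\setminus A$ is a finite disjoint union of semi-ring elements (again by the semi-ring axioms), and use finite additivity of $\mu$ on each piece to get $\sum_n \mu(S_n)\geq \mu^*(E\cap A)+\mu^*(E\setminus A)$, then let $\varepsilon\to 0$.

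Having defined $\rho := \mu^*\!\restriction_{\sigma(S)}$, it is a measure extending $\mu$. The remaining piece is $\sigma$-finiteness and uniqueness. For $\sigma$-finiteness I would need to observe that the extension is $\sigma$-finite on the subsets of $X$ covered by countably many elements of $S$; in the setting envisioned in the paper (applied ultimately to probability premeasures with $\mu(X)=1$), this is automatic since $X$ itself will lie in the covering class. For uniqueness, if $\rho'$ is another measure on $\sigma(S)$ extending $\mu$ with a common $\sigma$-finite decomposition by elements of $S$, I would apply the standard Dynkin $\pi$-$\lambda$ argument: the semi-ring $S$ generates the $\pi$-system of finite disjoint unions of its elements, on which $\rho$ and $\rho'$ agree by finite additivity, and the class of sets on which they agree within a $\sigma$-finite piece is a $\lambda$-system, hence equal to $\sigma(S)$ intersected with that piece.

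The main obstacle I anticipate is bookkeeping with the semi-ring axioms: the extension from an algebra is textbook, but on a semi-ring every step requires the little lemma that disjoint differences can be rewritten as finite disjoint unions of semi-ring elements, and one must keep finite and countable additivity carefully aligned when bounding $\mu^*$ from below. Once that is absorbed, the rest is the standard Carathéodory machine.
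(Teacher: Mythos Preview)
Your outline is correct and is precisely the standard Carath\'eodory outer-measure construction on a semi-ring. The paper does not actually give its own proof of this theorem; it simply cites Dudley (Proposition~3.2.4), whose argument is the one you have sketched. Your caution about $\sigma$-finiteness is well placed: the statement as written is slightly loose, and uniqueness genuinely needs a $\sigma$-finite covering by elements of $S$. In the only place the paper applies the theorem (the proof of Theorem~\ref{representation1}), this is available because $X\times[0,\infty)=\bigcup_n[0,n)$ with $\mu([0,n))=I(n)=n<\infty$, so your reservation does not cause trouble there.
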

A proof of this result can be found in \cite{dudley} (Proposition 3.2.4).

\subsection{Integration lattices}
Let $X$ be a set and let $f$ and $g$ be real-valued functions on $X$. We use the notation $f\vee g$ ($f\wedge g$) to mean the pointwise maximum (minimum) of the two functions.

Let $L\subseteq [0,\infty)^X$ be a subset of non-negative valued functions on $X$. A subset $L$ is called an \textbf{integration lattice} if $1\in L$ and for all $f,g\in L$ and $r\in [0,\infty)$ also $f\wedge g, f\vee g, rf$ and $f\vee g-f\wedge g$ are elements of $L$.

A map $I:L\to [0,\infty)$ is called an \textbf{integration operator} if $I(1)=1$ and for every collection $(f_n)_n$ in $L$ such that $f:=\sum_{n\in \mathbb{N}} f_n$ is also in $L$ we have $I(f)=\sum_{n\in \mathbb{N}} I(f_n)$. 
\begin{lem}
Let $I$ be a integration operator on a integration lattice $L$ and let $r\in [0,\infty)$. Then $I(rf)=rI(f)$.\label{linear}
\end{lem}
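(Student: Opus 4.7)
The plan is to bootstrap homogeneity from the countable additivity of $I$ by first handling integers, then positive rationals, and finally arbitrary non-negative reals via monotone approximation.

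First I would establish finite additivity: since $0 = 0\cdot 1 \in L$, applying the countable-additivity axiom to the constant-zero sequence forces $I(0)=0$. Then for $f_1,\dots,f_n \in L$ whose sum lies in $L$, padding with zeros turns the finite sum into a countable sum and yields $I(f_1+\cdots+f_n)=\sum_{k=1}^n I(f_k)$. Applied to $n$ copies of $f$ (and using $nf\in L$ by closure under scalar multiplication), this gives $I(nf)=nI(f)$ for every $n\in\mathbb{N}$. Writing $f = q\cdot(f/q)$ then yields $I(f/q) = I(f)/q$, and combining both gives $I(rf)=rI(f)$ for every $r\in\mathbb{Q}_{\geq 0}$.

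Next I would derive monotonicity. If $f,g\in L$ with $f\leq g$, then $f\wedge g = f$ and $f\vee g = g$, so by the lattice axiom $g-f = f\vee g - f\wedge g \in L$. Finite additivity applied to $f$ and $g-f$ (whose sum $g$ lies in $L$) gives $I(g)=I(f)+I(g-f)\geq I(f)$, since $I$ is $[0,\infty)$-valued.

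For arbitrary $r\in[0,\infty)$, I would pick rational sequences $q_n\nearrow r$ and $q'_n\searrow r$. Since $f\geq 0$, we have $q_n f\leq rf\leq q'_n f$, and each of these functions lies in $L$ by scalar closure. Monotonicity together with the rational case yields $q_nI(f)\leq I(rf)\leq q'_nI(f)$ for all $n$; letting $n\to\infty$ squeezes $I(rf)=rI(f)$. The only mild obstacle is verifying monotonicity from the precise axioms of an integration lattice, which is why I isolate it as a separate step; everything else is a direct unwinding of the countable-additivity hypothesis.
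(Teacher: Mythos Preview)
Your argument is correct. The rational-homogeneity step is essentially the same as the paper's. The difference is in the passage to arbitrary $r\in[0,\infty)$: you prove monotonicity (via the lattice axiom $f\vee g-f\wedge g\in L$) and then squeeze $I(rf)$ between $q_nI(f)$ and $q'_nI(f)$, whereas the paper avoids monotonicity altogether by writing
\[
rf \;=\; q_1f+\sum_{n\in\mathbb{N}}(q_{n+1}-q_n)f
\]
for an increasing rational sequence $q_n\nearrow r$, and applying the countable-additivity axiom directly to this telescoping series (each summand lies in $L$ by scalar closure, and rational homogeneity handles each term). The paper's route is shorter and, notably, uses only the scalar-multiplication closure of $L$, not the lattice structure. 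Your route has the compensating virtue that the monotonicity step is independently useful---the paper in fact isolates it later as a separate lemma in the weak setting---but for this lemma specifically the telescoping trick is the more economical move.
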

\begin{proof}
For $n,m\geq 1$ and $f\in L$ we find the following:

$$I\left(\frac{m}{n}f\right)=m\left(\frac{1}{n}f\right)=\frac{m}{n}nI\left(\frac{1}{n}f\right)=\frac{m}{n}I(f).$$
It follows that the statement holds for $r\in [0,\infty)\cap \mathbb{Q}$. Now take any $r\in [0,\infty)$ and let $(q_n)_n$ be an increasing sequence of rational non-negative numbers that converges to $r$. We find that $rf=q_1f+\sum_{n\in \mathbb{N}} (q_{n+1}-q_n)f$ and therefore we have that \[I(rf)=I(q_1f)+\sum_{n\in \mathbb{N}} I((q_{n+1}-q_n)f)=q_1+\sum_{n\in \mathbb{N}} (q_{n+1}-q_n)I(f)=rI(f).\]
\end{proof}
\begin{rem}
Lemma \ref{linear} is also true in the case that $I$ is a \emph{weak} integration operator on a \emph{weak} integration lattice $L$ and $r$ is an element of $[0,1]$. \label{linear2}
\end{rem}
The following result is a variant of the Daniell-Stone representation theorem and the proof follows Kindler's proof in \cite{kindler} closely.
\begin{thm}
Let $I$ be an integration operator on an integration lattice $L$. There exists a unique probability measure $\mathbb{P}$ on $(X,\sigma(L))$ such that $$I_\mathbb{P}(f)=I(f)$$ for all $f\in L$. \label{representation1}
\end{thm}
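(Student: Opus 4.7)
The proof will follow the Daniell–Stone strategy, ultimately invoking the Carathéodory extension theorem (Theorem \ref{cara}). The recurring tool is an internal monotone convergence property on $L$: if $(g_n) \subseteq L$ increases pointwise to $g \in L$, then the telescoping $g = g_1 + \sum_n (g_{n+1} - g_n)$ places every summand in $L$ (by the axiom $h \vee k - h \wedge k \in L$), so countable additivity of $I$ forces $I(g_n) \uparrow I(g)$. Also, for $f \in L$ and $r \geq 0$, the cutoff $\psi_{n,r}(f) := n(f \vee r - r) \wedge 1$ lies in $L$ (since $f \vee r \geq r \cdot 1 \in L$ gives $f \vee r - r = f \vee r - (f \vee r) \wedge r \in L$, and $\wedge$ and nonnegative scaling preserve $L$) and increases pointwise to $1_{\{f > r\}}$. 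These cutoffs are the bridge between $L$ and set-level data.

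Next I would construct the measure. Let $\mathcal{A}$ be the algebra generated by the $\pi$-system $\mathcal{P} := \{\{f > 0\} : f \in L\}$, which is closed under intersection via $\{f>0\}\cap\{g>0\} = \{f \wedge g > 0\}$; since $\{f > r\} = \{(f-r)^+ > 0\}$, one has $\sigma(\mathcal{A}) = \sigma(L)$. For $A \in \mathcal{P}$ define $\mu(A) := \lim_n I(nf \wedge 1)$ with $A = \{f > 0\}$; independence from the representative is checked by sandwiching via $n(f \wedge g) \wedge 1$ and $n(f \vee g) \wedge 1$, combined with the internal monotone convergence. Extend $\mu$ to $\mathcal{A}$ by finite additivity across disjoint decompositions, exploiting the lattice structure to reduce every element of $\mathcal{A}$ to a finite disjoint union built from generators.

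The technical core is then to show $\mu$ is countably additive on $\mathcal{A}$: given pairwise disjoint $A_n \in \mathcal{A}$ with $\bigsqcup_n A_n \in \mathcal{A}$, one produces increasing approximations $h_{n,k} \uparrow 1_{A_n}$ inside $L$, chooses a diagonal so that $\sum_n h_n \in L$, and applies the hypothesis $I(\sum_n h_n) = \sum_n I(h_n)$ together with internal monotone convergence. Carathéodory's theorem (Theorem \ref{cara}) then extends $\mu$ to a unique probability measure $\mathbb{P}$ on $\sigma(L)$. To confirm $I_\mathbb{P}(f) = I(f)$ for $f \in L$, reduce to $f \leq 1$ using Lemma \ref{linear}, and compare the layer-cake sum $s_N := \tfrac{1}{N}\sum_{k=1}^N 1_{\{f > k/N\}}$, which increases to $f$, against the parallel $L$-sequence $\tfrac{1}{N}\sum_{k=1}^N \psi_{n,k/N}(f)$; monotone convergence for $\mathbb{P}$ on one side and the internal monotone convergence on the other give the identity in the limit $n, N \to \infty$. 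Uniqueness is immediate: any representing $\mathbb{P}$ is forced, by the same cutoff computation, to agree with the constructed measure on $\mathcal{P}$, which generates $\sigma(L)$.

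The principal obstacle is deriving set-level countable additivity of $\mu$ from the function-level countable additivity of $I$, a subtlety aggravated by the fact that indicator functions of sets in $\mathcal{A}$ typically do not lie in $L$. This is the technical heart of Daniell–Stone; Kindler's argument, which the proof adapts, handles it via particularly clean two-sided approximations and a careful interplay between the lattice axioms and the internal monotone convergence lemma above.
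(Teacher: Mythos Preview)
Your route diverges from the paper's and, despite your closing remark, from Kindler's. The paper follows Kindler's device of passing to $X \times [0,\infty)$: the family $S$ of ``hypograph strips'' $[f,g) := \{(x,t) : f(x) \le t < g(x)\}$ for $f \le g$ in $L$ is a semi-ring, and setting $\mu([f,g)) := I(g-f)$ makes countable additivity on $S$ a one-liner, because a disjoint decomposition $[f,g) = \bigsqcup_n [f_n,g_n)$ forces the pointwise identity $g-f = \sum_n (g_n - f_n)$ with every summand in $L$. Carath\'eodory (Theorem~\ref{cara}) then yields a measure $\rho$ on $\sigma(S) \supseteq \sigma(L) \otimes \mathrm{Bo}_{[0,\infty)}$, and one reads off $\mathbb{P}(A) := \rho(A \times [0,1))$. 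The whole point of this maneuver is that it never requires approximating indicator functions by elements of $L$.

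Your direct approach on $X$ has a real gap at exactly the step you flag as the technical core. You propose to verify countable additivity on $\mathcal{A}$ by producing $h_{n,k} \uparrow 1_{A_n}$ \emph{inside} $L$ and then summing. But a generic $A \in \mathcal{A}$ involves complements of generators, say $A \subseteq \{g>0\}^c = \{g=0\}$, and then any $h \in L$ with $h \le 1_A$ must vanish on $\{g>0\}$. In many concrete lattices this forces $h \equiv 0$: take $L$ to be the nonnegative continuous functions on $[0,1]$, $g(x)=x$, and $I$ integration against $\tfrac{1}{2}(\mathrm{Leb}+\delta_0)$; then $\{0\} \in \mathcal{A}$ has $\mu(\{0\})=\tfrac{1}{2}$ (correctly, via your complementation rule), yet the only $h \in L$ with $h \le 1_{\{0\}}$ is zero. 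So one-sided approximation from below cannot recover $\mu(A_n)$ in general, and the diagonal argument does not get off the ground. A correct direct proof on $X$ needs the full two-sided Daniell upper/lower integral machinery; Kindler's product-space trick is precisely the shortcut that renders all of that unnecessary.
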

\begin{proof}
For $f,g\in L$ such that $f\leq g$ define $$[f,g):=\{(x,t)\in X\times [0,\infty)\mid f(x)\leq t < g(x)\}$$
and let $S$ be the family of all subsets of $X\times [0,\infty)$ of this form.

For $f_1\leq g_1$ and $f_2\leq g_2$ in $L$ we have:
\[[f_1,g_1)\cap [f_2,g_2)= [f_1\vee f_2,g_1\wedge g_2)\]
and \[[f_1,g_1)\setminus [f_2,g_2)=[f_1,g_1\wedge f_2)\cup [f_1\vee g_2,g_1\wedge (g_1\vee g_2)).\]
Since we also clearly have that $\emptyset \in S$ and $L$ is closed under taking finite minima and maxima, we conclude that $S$ is a semi-ring.

Now define $\mu:S\to [0,\infty)$ by $$\mu([f,g)):=I(g-f)$$ for all $f\leq g$. Note that this is well-defined since $g-f=g\vee f - g\wedge f$, which is an element of $L$. We clearly have that $\mu(\emptyset)=\mu([f,f))=I(0)=0$. For a collection $([f_n,g_n))_{n\in \mathbb{N}}$ of pairwise disjoint subsets in $S$ such that $\bigcup_{n\in \mathbb{N}}[f_n,g_n) = [f,g)$ for some $f\leq g$ in $L$ we can show that $$g-f=\sum_{n\in \mathbb{N}}(g_n-f_n).$$
It follows that \[\mu([f,g))=I(g-f)=\sum_{n\in \mathbb{N}} I(g_n-f_n)=\sum_{n\in \mathbb{N}} \mu([f_n,g_n)).\] It follows now from Theorem \ref{cara} that $\mu$ can be extended uniquely to a $\sigma$-finite measure $\rho$ on $(X\times [0,\infty),\sigma(S))$. 

We will now show that $\sigma(L)\otimes \text{Bo}_{[0,\infty)}\subseteq \sigma(S)$. For $f\in L$ and $r\in [0,\infty)$ define $f_n:=n(f\vee r-f)\wedge 1$ and note that this an element of $L$. For $s\in [0,\infty)$ we have $$\bigcup_{n=1}^\infty [0,sf_n)=\{f< r\}\times [0,s).$$
Therefore every subset of the form $\{f<r\}\times [0,s)$ is in $\sigma(S)$. This shows that $\sigma(L)\otimes \text{Bo}_{[0,\infty)}\subseteq \sigma(S)$. 

Now define $\mathbb{P}:\sigma(L)\to [0,1]$ by $\mathbb{P}(A):=\rho(A\times [0,1))$. Note that for $r$ and $s $ in $[0,\infty)$ we have  
\begin{align*}
    \rho(\{f< r\}\times [0,s))& =\rho\left(\bigcup_{n=1}^\infty [0,sf_n)\right)\\
    & = \lim_{n\to \infty}\rho([0,sf_n))\\
    & =\lim_{n\to \infty}I(sf_n)\\
    &= \lim_{n\to \infty}sI(f_n)\\
    &= s\lim_{n\to \infty}\rho([0,f_n))\\
    & = s \rho(\{f< r\}\times [0,1)) = s\mathbb{P}(\{f< r\})
\end{align*}
Here we used that $[0,sf_n)\subseteq [0,sf_{n+1})$ and Lemma \ref{linear}.
It follows that \[\rho(\{r_1\leq f < r_2\}\times [0,s))=s\mathbb{P}(\{r_1\leq f < r_2\}).\]
We have \[\mathbb{P}(X)=\rho(X\times [0,1))=\rho([0,1))=I(1)=1\] and the $\sigma$-additivity of $\mathbb{P}$ is inherited by $\rho$. Therefore $\mathbb{P}$ is a probability measure on $(X,\sigma(L))$.

For a measurable map $f:X\to [0,\infty)$ there is an increasing sequence of non-negative simple functions $(s_n:=\sum_{k=1}^{m_n}a^n_k1_{A^n_k})_n$, with $A^n_k$ of the form $\{r_1\leq f < r_2\}$, that converges to $f$. Define \[B_n:=\bigcup_{k=1}^{m_n}A_k^n\times [0,a^n_k)\] and note that $\bigcup_{n=1}^\infty B_n=[0,f)$ and that $B_n\subseteq B_{n+1}$ for all $n$. We have the following equalities:

\begin{equation} I(f)=\rho([0,f))=\lim_{n\to \infty}\rho(B_n)\label{x}\end{equation} 
and 
\begin{equation}
\rho(B_n)=\sum_{k=1}^{m_n}\rho(A^n_k\times [0,a^n_k))=\sum_{k=1}^{m_n}a^n_k\mathbb{P}(A^n_k) = I_{\mathbb{P}}(s_n) \label{y}
\end{equation}
Combining \eqref{x} and \eqref{y} with the monotone convergence theorem we conclude that $I(f)=I_{\mathbb{P}}(f)$.

Let $\mathbb{P}'$ be another probability measure with this property. Then we have for all $f\in L$ and $r\in[0,\infty)$ that $$\mathbb{P}'(\{f< r\})=\lim_{n\to \infty}I_\mathbb{P}'(f_n)=\lim_{n\to \infty}I_\mathbb{P}(f_n)=\mathbb{P}(\{f<r\}).$$
Since the sets $\{f<r\}$ form a $\pi$-system that generates $\sigma(L)$ we can conclude that $\mathbb{P}=\mathbb{P}'$. 
\end{proof}
\subsection{Proofs of the results in section \ref{5}}
\begin{proof}[Proof of Theorem \ref{representation2}]
It is easy to verify that $\mathbb{N}L$ is an integration lattice. For an element $mf\in \mathbb{N}L$ define $I'(mf):=mI(f)$. Suppose $mf=ng$ for $f,g\in L$. Then we have by Remark \ref{linear2} that $$mI(f)=(m\vee n)\frac{m}{m\vee n}I(f)=(m\vee n)I\left(\frac{m}{m\vee n}f\right)=(m\vee n)I\left(\frac{n}{m\vee n}g\right)=nI(g).$$
This shows that $I'$ is well-defined and that $I'(f)=I(f)$ for all $f\in L$.

It is clear that $I(1)=1$. Let $(m_nf_n)_{n\in \mathbb{N}}$ be a collection of elements in $\mathbb{N}L$ such that $mf:=\sum_{n\in \mathbb{N}} m_nf_n$ is also an element of $\mathbb{N}L$. We observe that $f=\sum_{n\in \mathbb{N}} \sum_{k\in \mathbb{N}} a(k,n)\frac{f_n}{m}$ where $a(k,n)=1$ if $k\leq m_n$ and $0$ otherwise. Note that $a(k,n)\frac{f_n}{m}$ is an element of $L$ and therefore $$I(f)=\sum_{n\in \mathbb{N}}\sum_{k\in \mathbb{N}} I\left(\frac{a(k,n)}{m}f\right)= \frac{1}{m}\sum_{n\in \mathbb{N}}\sum_{k\in \mathbb{N}} a(k,n)I\left(f\right)=\frac{1}{m}\sum_{n\in \mathbb{N}} m_nI(f_n).$$
We can conclude that $I'$ is an integration operator on the integration lattice $\mathbb{N}L$ and therefore by Theorem \ref{representation1} there is a unique probability measure on $(X,\sigma(\mathbb{N}L))$ such that $\int_Xf\text{d}\mathbb{P}=I'(f)$. It is clear that $\sigma(L)\subseteq\sigma(\mathbb{N}L)$. For $mf\in \mathbb{N}L$ and $r\in [0,\infty)$ we find that $\{mf<r\}=\{f<\frac{r}{m}\}\in \sigma(L)$ and therefore also $\sigma(\mathbb{N}L)\subseteq \sigma(L)$. 

Let $\mathbb{P}'$ be another probability measure on $(X,\sigma(L))$ such that $\int_Xf\text{d}\mathbb{P}' = I(f)$ for all $f\in L$, then we have that $$I'(mf)=mI(f)=m\int_Xf\text{d}\mathbb{P}' =  \int_Xmf\text{d}\mathbb{P}'.$$
This implies that $\mathbb{P}'=\mathbb{P}$. 
\end{proof}
\begin{lem}\label{orderpres}
Let $I$ be a weak integration operator on a weak integration lattice $L$. For $f,g\in L$ such that $f\leq g$ we have $I(f)\leq I(g)$, and if $g-f\in L$ then $I(g-f)=I(g)-I(f)$.
\end{lem}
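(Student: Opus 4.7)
The plan is to derive both statements from countable additivity of $I$, using the weak integration lattice axiom $f \vee g - f \wedge g \in \mathbb{N}L$ for the monotonicity claim. A necessary preliminary step is to establish $I(0)=0$. Taking $r = 0$ in the scalar multiplication axiom shows $0 \in L$, and the pointwise decomposition $0 = \sum_{n \in \mathbb{N}} 0$ lies in $L$, so countable additivity gives $I(0) = \sum_{n \in \mathbb{N}} I(0)$. Since $I(0) \in [0,\infty)$, this forces $I(0) = 0$.

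For the second assertion, suppose $g - f \in L$. The pointwise identity $g = f + (g-f) + 0 + 0 + \cdots$ expresses $g \in L$ as a countable sum of elements of $L$, so countable additivity yields
\[
I(g) = I(f) + I(g-f) + I(0) + I(0) + \cdots = I(f) + I(g-f),
\]
which rearranges to $I(g-f) = I(g) - I(f)$.

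For the monotonicity assertion, I will use that $f \leq g$ forces $f \vee g = g$ and $f \wedge g = f$, so the axiom $f \vee g - f \wedge g \in \mathbb{N}L$ specializes to $g - f \in \mathbb{N}L$. Thus $g - f = n h$ for some $h \in L$ and some $n \in \mathbb{N}$. If $n = 0$ then $g = f$ and there is nothing to prove; otherwise $g = f + \underbrace{h + \cdots + h}_{n \text{ copies}} + 0 + 0 + \cdots$ pointwise, with every summand in $L$ and total sum $g \in L$, so countable additivity gives $I(g) = I(f) + n\, I(h)$, which is at least $I(f)$ since $I(h) \geq 0$.

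The main (mild) obstacle is that the defining additivity of $I$ is countable rather than finite, and $g-f$ is guaranteed only to lie in $\mathbb{N}L$, not in $L$. The fix in both parts is to pad a finite decomposition with infinitely many zeros, which is legitimate once $I(0) = 0$ has been established; once that is in hand, both assertions fall out immediately.
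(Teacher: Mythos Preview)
Your proof is correct, and in fact slightly more self-contained than the paper's. The paper first scales everything down by $1/n$: since $g-f\in\mathbb{N}L$ there is an $n$ with $\frac{g-f}{n}\in L$, and then it writes $\frac{g}{n}=\frac{g-f}{n}+\frac{f}{n}$ and applies additivity together with the linearity result $I(rf)=rI(f)$ (Remark~\ref{linear2}) to cancel the $1/n$ factors. Your approach bypasses this linearity lemma entirely: once $g-f=nh$ with $h\in L$, you decompose $g=f+h+\cdots+h$ directly and apply countable additivity. This is more elementary, since Remark~\ref{linear2} itself ultimately rests on the same padding-with-zeros trick you make explicit. The paper, by contrast, silently treats finite additivity as available; your explicit verification that $I(0)=0$ and your remark about padding are a genuine improvement in rigor. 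Both arguments extract the same content from the axiom $f\vee g-f\wedge g\in\mathbb{N}L$, so the difference is one of packaging rather than substance.
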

\begin{proof}
There exists and $n$ such that $\frac{g-f}{n}\in L$. We find \[\frac{1}{n}I(g)=I\left(\frac{g}{n}\right) = I\left(\frac{g-f}{n}+\frac{f}{n} \right)= I\left(\frac{g-f}{n}\right) + \frac{1}{n}I(f)\geq \frac{1}{n}I(f). \]
Here we use Remark \ref{linear2}. The first part of the statement now follows. The second part can be proven in a similar way.
\end{proof}
\begin{proof}[Proof of Proposition \ref{Dini}]
First note that for $f\in L$ and $n\geq 1$, then \begin{align*}
    I(f)&=I\left(\frac{(n-1)f}{n}+\frac{f}{n}\right)\\
    & = I\left(\frac{(n-1)f}{n}\right)+I\left(\frac{f}{n}\right)\\
    & =I\left(\frac{(n-2)f}{n}\right)+I\left(\frac{f}{n}\right)+I\left(\frac{f}{n}\right)\\
    & = \ldots \\
    & = n I\left(\frac{f}{n}\right).
\end{align*}
By Theorem \ref{representation2} it is enough to show that $I$ is a weak integration operator. Let $(f_n)_{n\in \mathbb{N}}$ be a collection of elements in $L$ such that also $f:=\sum_{n\in \mathbb{N}}f_n$ is an element of $L$. Define for every $n$ the function $g_n:=\sum_{k\leq n}f_k$. For every $n$ there exists an $m_n$ such that $\frac{g_k}{m_n}\in L$ for all $k\leq n$. Indeed, for $n=1$ this is clear. Suppose the claim holds for a natural number $n$, then $\frac{g_n}{m_n}$ and $\frac{f_{n+1}}{m_n}$ are element of $L$. Because $$\frac{g_{n+1}}{m_n}=\frac{g_n}{m_n}+\frac{f_{n+1}}{m_n}$$ is an element of $\mathbb{N}L$, there exists an $m'$ such that $\frac{1}{m'}\frac{g_{n+1}}{m_n}\in L$. Since $\frac{g_k}{m_n}\in L$ for $k\leq n$, we also have that $\frac{g_k}{m'm_n}\in L$. It follows that for $m_{n+1}:=m'm_n$  we have that $\frac{g_k}{m_{n+1}}\in L$ for all $k\leq n+1$. The claim now follows by induction.

The functions $(g_n)_n$ form a sequence of continuous functions that increases pointwise to the continuous function $f$. Because $X$ is a compact Hausdorff space, Dini's theorem tells us that $(g_n)_n$ converges uniformly to $f$. 

Let $\epsilon>0$. There exists an $n$ such that 
\begin{equation}\label{appendixeq}
    \lVert f-g_n\rVert \leq \epsilon.
\end{equation}

By the above there exists an $m_n$ such that $\frac{1}{m_n}g_k\in L$ for all $k\leq n$. Because $L$ is a weak integration lattice, there exists an $m$ such that \[h_n:=\frac{1}{m}\left(\frac{f}{m_n}-\frac{g_n}{m_n}\right)\]
is also an element of $L$. By \eqref{appendixeq} and the first part of Lemma \ref{orderpres} we find that \begin{equation}\label{a}I(h_n)\leq I\left(\frac{\epsilon}{m_nm}\right)=\frac{\epsilon}{m_nm}.\end{equation} By the second part of Lemma \ref{orderpres} we have \begin{equation}\label{b}
    I\left(\frac{f}{m_nm}\right)-I\left(\frac{g_n}{m_nm}\right)=I(h_n). \end{equation}
Using \eqref{a} and \eqref{b} and the hypothesis we find the following:
\begin{align*}
    I(f)-\sum_{k\leq n}I(f_k) & = I\left(f\right)-\sum_{k\leq n}m_nI\left(\frac{f_k}{m_n}\right) \\
    & = I(f)-m_n\left(I\left(\frac{f_1}{m_n}\right)+I\left(\frac{f_2}{m_n}\right)+I\left(\frac{f_3}{m_n}\right)+\ldots +I\left(\frac{f_n}{m_n}\right)\right)\\
    & = I(f)-m_n\left(I\left(\frac{g_2}{m_n}\right)+I\left(\frac{f_3}{m_n}\right)+\ldots +I\left(\frac{f_n}{m_n}\right)\right)\\
    &= I(f)-m_n\left(I\left(\frac{g_3}{m_n}\right)+\ldots +I\left(\frac{f_n}{m_n}\right)\right)\\
    & = \ldots \\
    & = I(f)- m_n I\left(\frac{g_n}{m_n}\right)\\
    & = m_nm\left(I\left(\frac{f}{m_nm}\right)-I\left(\frac{g_n}{m_nm}\right) \right)\\
    &=m_nmI(h_n)<\epsilon
\end{align*}
Because $(I(f)-\sum_{k\leq m}I(f_k))_m$ is a decreasing sequence of positive numbers, we can conclude that $I(f)=\sum_{n\in \mathbb{N}}I(f_n)$.
\end{proof}
\end{appendices}
\newpage
\bibliographystyle{abbrv}
%\nocite{*}
\bibliography{ref.bib}
\end{document}